\newcommand\tabcaption{\def\@captype{table}\caption}
\newcommand\figcaption{\def\@captype{figure}\caption}
\numberwithin{equation}{section}
\newcommand{\mylabel}[2]{#2\def\@currentlabel{#2}\label{#1}}
\newcommand{\R}{\mathbb{R}}
\newcommand{\E}{\mathbb{E}}
\renewcommand{\P}{\mathbb{P}}
\newcommand{\dd}{\text{d}}
\newcommand{\abs}[1]{\left\vert#1\right\vert}
\newcommand{\mf}[1]{\mathcal{T}_1({f}( #1),h)}
\newcommand{\mg}[1]{\mathcal{T}_2({g^r}( #1), h)}
\newcommand{\mean}[1]{\mathbb{E}\lbrack #1\rbrack}
\theoremstyle{definition}
\newtheorem{thm}{Theorem}[section]
\newtheorem{defn}[thm]{Definition}
\newtheorem{lem}[thm]{Lemma}
\newtheorem{exm}[thm]{Example}
\newtheorem{prop}[thm]{Proposition}
\newtheorem{cor}[thm]{Corollary}
\newtheorem{rem}[thm]{Remark}
\newtheorem{assumption}[thm]{Assumption}
\begin{document}
\title{Weak error analysis for strong approximation schemes of SDEs with super-linear coefficients II:
finite moments and higher-order schemes
\footnotemark[2] \footnotetext[2]{
Wang was supported by the Natural Science Foundation of China (No.12471394, 12071488, 12371417). The Postdoctoral Fellowship Program of CPSF supported Zhao under Grant Number GZC2024205.
                \\
                E-mail addresses:
                zhaoyuying78@gmail.com,
                x.j.wang7@csu.edu.cn,
                zzhang7@wpi.edu
                }}
\author[a]{Yuying Zhao}

\author[a]{Xiaojie Wang}

\author[b]{Zhongqiang Zhang
\thanks{Corresponding author}}
\affil[a]{
    School of Mathematics and Statistics, HNP-LAMA, Central South University, Changsha, Hunan, P. R. China}
\affil[b]{
    Department of Mathematical Sciences, Worcester Polytechnic Institute, Worcester, MA 01609 USA}
\maketitle	
\begin{abstract}
This paper is the second in a series of works on weak convergence of one-step schemes for solving stochastic differential equations (SDEs) with one-sided Lipschitz conditions. 
It is known that the super-linear coefficients may lead to a blowup of moments of solutions and numerical solutions and thus affect the convergence of numerical methods. 
Wang et al. 
 (2023, IMA J. Numer. Anal.) have analyzed  
 weak convergence of one-step numerical schemes when solutions to SDEs have \textit{all finite moments}. Therein some modified Euler schemes have been discussed about their weak convergence orders.
In this work, we explore the effects of limited orders of moments on the weak convergence of a family of explicit schemes. 
The schemes are based on approximations/modifications of terms in the Ito-Talyor expansion.
We provide a systematic but simple way to establish weak convergence orders for these schemes. 
We present several numerical examples of these schemes and show their weak convergence orders. \\


\par
{\bf AMS subject classification:} {\rm\small 60H35, 60H15, 65C30.}	\\
	
{\bf Keywords:} Non-globally Lipschitz coefficients; modified Ito-Taylor schemes;
weak approximation; 
sigmoid functions
\end{abstract}

\section{Introduction}\label{sec:intro}
In this work, we investigate weak convergence of first- and second-order explicit schemes for stochastic differential equations (SDEs) with non-globally Lipschitz coefficients of superlinear growth, following discussions in  \cite{Milstein2005numerical,wang2021weak}.
In \cite{wang2021weak}, an approximation theorem is established on the weak convergence orders of one-step numerical schemes with \textit{infinity} moments. 
The weak convergence of first-order numerical schemes has also been analyzed for SDEs with global Lipschitz coefficients \cite{Milstein2005numerical} and globally monotone coefficients \cite{wang2021weak,zhao2024weak}.
However,  several limitations in \cite{wang2021weak} 
should be addressed. The limitations include but are not limited to 
1)
requiring infinitely many moments of exact solutions and numerical solutions,  2) schemes of at most first-order weak convergence, and 3) not structure-preserving from the schemes. 

We will address the first two limitations by discussing 
1) relaxing requirements of all finite moments, which necessitate the moments of derivatives of solutions with respect to the initial condition; 
2) second-order schemes obtained from modifying classical second-order schemes for SDEs with Lipschitz coefficients. 
We will not focus on 3) while we discuss some possibilities in Remark \ref{rem:structure-preserving}. For 1), we extend the weak convergence theorem for one-step numerical schemes in \cite{wang2021weak}, where the authors assume the solutions to the SDEs have infinitely many moments 
before developing a weak error analysis with non-globally Lipschitz coefficients of superlinear growth. We will reduce the requirements of all finite moments of both exact and numerical solutions from the infinite order to a finite order for a $q$-th order numerical scheme ($0<q\leq2$). The reduction allows the design of numerical schemes of weak convergence when solutions have only a few moments. Also, we show the required moments of finite orders are critical for the desired convergence order. 
For 2), since only first-order schemes are considered in \cite{wang2021weak,chak2024regularity}, we discuss numerical schemes with second-order weak convergence, which haven't been explored for SDEs with non-globally Lipschitz coefficients. We modify schemes of weak order two for SDEs with Lipschitz coefficients and show that the weak convergence order of the modified schemes may degenerate to one for nonlinear SDEs with only limited moments. 
See Example \ref{exm:nonlin-diff-equ} for a representative example.

We omit a thorough review of schemes of weak convergence for SDEs with non-globally Lipschitz coefficients and refer interested readers to \cite{wang2021weak}. 



The main novelty and contributions of the work are summarized as follows.
\begin{itemize}
    \item 
\textit{Relaxed assumptions} (compared to  \cite{wang2021weak}). 
We relax one of the key assumptions in the fundamental theorem from \cite{wang2021weak}, which requires all moments for solutions to SDEs and their numerical approximations. The relaxation leads to wider applications of the fundamental theorem to practical numerical schemes. To establish a similar theorem with the relaxation, we investigate the solutions and their derivatives with respect to the initial data. After analyzing the moments of these derivatives, we provide a simpler proof (see Appendix \ref{app:gen-SDE-well-pose} and \ref{appen:deri-wrt-ini-val}) 
with a more relaxed assumption than in  \cite{wang2021weak}, which is inherited from  
\cite{Cerrai2001second,wang2021weak}.

\item \textit{Schemes of weak convergence of high orders and effects of moments of solutions}. 
Based on classical second-order schemes, we present several modified schemes (Examples \ref{exm:truncation}-\ref{exm:modified}) and discuss their weak convergence orders for nonlinear SDEs. We show in Examples 
\ref{exm:nonlin-diff-equ} (theoretically) and 
\ref{ex:numerical_non_dri_diff_critical_case} (numerically) that  the convergence order of the schemes of 
second-order weak convergence
become one when the moment conditions are not satisfied. 
\end{itemize}
The article is structured as follows. We introduce key notations and assumptions in Section \ref{sec:assumption-preliminary}. We also present an approximation theorem for SDEs with weaker assumptions than those in \cite{wang2021weak}. The proof for the improvement is found in Appendix \ref{append-proof-weak-conver-theorem}.
We introduce second-order weak convergence schemes in Section \ref{sec:second-order-scheme-prensented} and outline the necessary technical assumptions to achieve this. 
We present some numerical results in Section \ref{sec:numer-results}.
The essential proofs of the main results are presented
in  Section \ref{sec:weak-con-order-proof} while more details can be found in the appendix.
%
\section{Notations and Preliminaries}
\label{sec:assumption-preliminary}
We will use similar notations as in \cite{wang2021weak} and present them below for completeness.

We denote by  $|\cdot|$ the $l^2$-norm of vectors and matrices. The space of $k$-th continuously differentiable functions from $\R^d $ to $\R^m $ is denoted by
$\mathbf{C}^{k}(\R^d,\R^m)$.
For a multi-index $\alpha$, we denote the partial derivatives of $\nu \colon \R^d \rightarrow \R^l $ by
	$D^{\alpha} v(x)
	:=
	\frac{\partial^{\abs{\alpha}_1} 
		v(x)}{\partial x_{1}^{\alpha_{1}} 
		\cdots \partial x_{d}^{\alpha_{d}}}
	=\partial_{x_{1}}^{\alpha_{1}} 
	\cdots \partial_{x_{d}}^{\alpha_{d}} v (x), \abs{\alpha}_1=\sum_{i=1}^d\alpha_i.
 $
For a nonnegative integer $k$, we define
	$
	D^{k} \nu ( x ) := \left\{D^{\alpha} \nu ( x ) 
	: | \alpha | =k \right\}
	$  
	as the set of all partial derivatives of order $k$.  We also introduce $
	\left|D^{k}  \nu ( x ) \right|
	=\Big( \sum_{|\alpha|=k}\left|D^{\alpha} 
	\nu (x)\right|^{2} \Big)^{1 / 2}$.
 We also define $x \vee y := \max\{x, y\}$ and $ x \wedge y := \min\{x, y\}$.
 
For $ r \in \mathbb{N} $, 
The $ L^r $ norm of a $ \R^{d \times m} $-valued variable $Z \in L^r(\Omega; \R^{d \times m})$, with $ \E $ the expectation, will be denoted by  
$ \| Z \|_{L^r(\Omega;\R^{d \times m})} 
=(\E [|Z|^r])^{\frac{1}{r}} < \infty $.
Consider the  following It\^{o} stochastic differential equation:
\begin{equation}
	\label{eq:Problem_SDE}
		\dd X(t) = f ( X(t) ) \, \dd t 
		+  g(X(t)) \, 
           \dd W(t),
		 \quad t \in (0, T],
	\quad 
		X(0)  = x_{0} \in \mathbb{R}^d,
\end{equation}
where $f \colon \mathbb{R}^d \rightarrow \mathbb{R}^d$ is the drift coefficient, and
$ g \colon \mathbb{R}^d \rightarrow \mathbb{R}^{d\times m} $ 
is the diffusion coefficient. The process $W(t)$ is an $ m $-dimensional Wiener process on a complete probability space $ ( \Omega, \mathcal{ F }, \P ) $ with a filtration $ \mathcal{ F}_t $ and the initial data $x_0$
is deterministic for simplicity. 
\subsection{Conditions on coefficients of SDEs}
 We now outline the key assumption for the drift and diffusion coefficients.
Most items of the assumption are similar to those in 
\cite{wang2021weak} but we remove the item \eqref{eq:moment-all-cond} that is used in \cite{wang2021weak}. The removal leads to 
limited moments of solutions instead of all finite moments in \cite{wang2021weak}.
%
\begin{assumption}
\label{ass:coefficient_function_assumption}
\mylabel{ass:a1}{(A1)}
The drift coefficient  
$ f \in \mathbf{C}^{2q+2}( \mathbb{R}^{d};\mathbb{R}^{d})$ meets the following condition for $ r \geq 0 $, 
\begin{equation}
\label{ass:drift-f}
\sup_{x \in \mathbb{R}^{d}}
\frac{ | D^{\alpha} f(x) |}{1+ |x|^{(2r+1-j)\vee 0}} < \infty , \quad \abs{\alpha}_1=j,
\,\,\,  j \in \{ 0, 1, \ldots , 2q+2\};
\end{equation}
\mylabel{ass:a2}{(A2)}
The diffusion coefficient $ g \in \mathbf{C}^{2q+2} (\mathbb{R}^{d};  \mathbb{R}^{d \times m} ) $ satisfies the condition that there exists $ \rho \leq r+1 $  such that
\begin{equation}
\label{ass:diff-g}
\sup_{x \in \mathbb{R}^{d}}
\frac{| D^{\alpha} g (x) |}{1 +
 | x |^{(\rho-j)\vee0}} < \infty ,
 \quad  \abs{\alpha}_1 =j,
 \,\,\,  j \in \{ 0, 1, \ldots , 2q+2\};
\end{equation}
\mylabel{ass:a3}{(A3)}
Let $ p_0 \geq 2 $ be a sufficiently large number and there exists 
$ c_{p_0} \in \R $
such that
\begin{equation}
\label{eq:assum-f-g}
       \langle D  f(x) y , y  \rangle 
           + \frac{p_0 -1}{2}
           | D g(x) y|^{2}
            \leq c_{ p_0 } |y|^{2}, 
            \quad  
              x,y \in \mathbb{R}^{d}.
\end{equation}
\end{assumption}
\begin{rem}\label{rem:assumption-difference}
Compared to \cite{wang2021weak},  we relax several conditions as follows.
\begin{enumerate}
    \item 
 We relax condition $\rho\leq r$ to 
$\rho\leq r+1$ in \ref{ass:a2};
\item 
 In \ref{ass:a3}, we only require  one existing large $p_0\geq 2$ instead of all;
\item
We remove the following condition (the condition (A4) in  \cite{wang2021weak}), which is cited from \cite{Cerrai2001second}: 
there exist $ a_1 > 0 $ and $ \gamma  , c_1 \geq 0 $ such that for any $ x , y \in \mathbb{R}^{d}$ it holds 
\begin{equation}\label{eq:moment-all-cond}
\langle f(x+y)-f(x),y \rangle \leq
 - a_1 | y |^{2r+2} + c_1 (| x |^{\gamma} + 1).
\end{equation}
In the proof, we don't require $a_1>0$, which is required when $t\to\infty$ in \cite{Cerrai2001second}.   
\end{enumerate}
\end{rem}

We also assume  the following one-sided Lipschitz condition (globally monotone condition), e.g. in \cite{tretyakov2013fundamental,sabanis2016euler},

\noindent
\mylabel{assu:a3'-mono-condi}{(A3')}
$
\langle x - y , f(x) - f(y) \rangle
+ \frac{p_0' -1}{2}
|g(x)-g(y)|^{2} \leq c_{p_0'} | x - y |^{2},
\quad
\forall x,y \in \mathbb{R}^{d}$, $p_0'>1$. 
%
Young's inequality leads to  
\begin{equation}
 \label{eq:co_mono_condi} 
	\langle x, f(x) \rangle
	+ 
	\frac{p_{0}'-1-\varepsilon}{2}
       | g(x) |^{2} 
	\leq 
  c_0 + c'_{p_0'} |x|^{2} ,
	\quad
	\forall x \in \R^d,
\end{equation} 
where $\varepsilon>0$ is arbitrarily small  if $g(0)\neq 0$ and $ c_0 = \frac{|f(0)|^2}{2} +  \frac{(p_0' -1)(p_0'-1-\varepsilon )}
{2 \varepsilon} |g(0)|^2 $; and
$ c'_{p_0} = c_{p_0'} + \frac{1}{2} $. When $g(0)=0$, we have $\varepsilon=0$ and $c_0=\frac{\abs{f(0)}^2}{2}$. 
According to It\^{o}'s formula, the inequality \eqref{eq:co_mono_condi} ensures bounded moments \cite{mao2008stochastic,khasminskii2011stochastic}. Specifically, there exists a constant $ C > 0 $ such that
\begin{equation}
 \label{eq:esti_sol_SDE_wrt_initial}
  \E [|X^{0,x_0}(t)|^{p_1} ] 
      \leq C (1+|x_0|^{p_1}),
    \quad 
    2 \leq p_1 \leq p_{0}'-\varepsilon, 
    \quad t \in [0, T].
\end{equation}
We remark that assumption \ref{ass:a3} in Assumption \ref{ass:coefficient_function_assumption} implies this condition with $p_0'\geq p_0$. 
The inequality may be proved by using Taylor's expansion and applying the inequality \eqref{eq:assum-f-g}: 
\[ \langle x - y , \int_{0}^{1} D f \big (x+ s(y-x)\big) (x-y) \, \dd t \rangle
+ \frac{p_0 -1}{2}
|\int_{0}^{1} D g \big(x+ s (y-x)\big) (x-y) \, \dd s|^{2} \leq c_{p_0} | x - y |^{2}, \]
where $p_0$ can be arbitrarily large, e.g. in \cite{Cerrai2001second,wang2021weak},
$p_0'=p_0<\infty$. However, it may be true that  $p_0'>p_0$, which is the case in   Example 
\ref{exm:nonlin-diff-equ}. Although it is redundant when $p_0'=p_0$, we assume \ref{assu:a3'-mono-condi} 
in addition to Assumption \ref{ass:coefficient_function_assumption}.
 
%
Assumption \ref{ass:a1} gives us
\begin{equation}\label{contronal:f}
  |f(x) - f(y) | \leq C (1 + |x|^{2r} + |y|^{2r})
  |x - y|, \quad  x,y \in \R^d,
\end{equation}
which implies the polynomial growth 
\begin{equation}
\label{eq:f-growth}
  |f(x)| \leq C (1 + |x|^{2r+1}),
  \quad
   x \in \R^d . 
\end{equation}
From \eqref{eq:co_mono_condi} and \eqref{contronal:f}, we derive 
\begin{equation}
   \label{eq:g-polynomial-growth-con}
  | g(x) | \leq C ( 1+|x|^{r+1} ).
  \end{equation}
  Under the inequalities \eqref{contronal:f},  
\eqref{ass:diff-g}, and 
\eqref{eq:co_mono_condi}, we conclude that $r+1 \geq \rho$.
If $f(x)$ is a scalar polynomial that satisfies Assumption 
\ref{ass:coefficient_function_assumption}, it can be expressed as:
$f(x) = -c_{2k+1} x^{2k+1} + \sum_{i=0}^{2k} c_i x^i $,
where $c_{2k+1}>0$, $c_i\in \mathbb{R}$, $0\leq i\leq 2k$, and $r\geq k$ and $k \in \mathbb{N}$ (natural number).

\subsection{The approximation theorem}
We begin by defining $ Y(t,x;t+h) $ as the one-step approximation for the solution $ X(t,x;t+h)$, with an initial value $x$ on a uniform mesh on $ [0,T] $ and a step size given by $ h = \frac{T}{N} $, where $ N \in \mathbb{N} $. On the uniform mesh grid 
$ \{t_n = nh, n = 0, 1, \cdots, N\} $, we recursively construct the numerical approximations  $ \{Y_n\}_{0 \leq n \leq N} $ using the relation $ Y_{n+1} = Y(t_n,Y_n;t_{n+1}) $. To simplify notation, $C$ denotes a positive constant independent of the step size $h$.

We consider the convergence of $\mean{\varphi(Y_{n})}$ to $\mean{\varphi (X_{t_n})}$ where $\varphi$ and its derivatives have at most a polynomial growth at infinity. 
\begin{assumption}\label{assm:test-functions}
Let $ \varphi \colon \mathbb{R}^{d} \rightarrow \mathbb{R}^m $ ($m\geq 1$).
Assume that $\varphi \in \mathbb{G}^{2q+2}$, i.e., there exist constants $L>0$ and $\kappa \geq 1$ such that 
\begin{equation}
\label{eq:varphi-bound}
  |D^j \varphi (x)| \leq L (1 + |x|^{\kappa} ),
    \,
    j \in \{j=0, 1, 2, ..., 2q+2\}.
\end{equation}
\end{assumption}
The following regularity condition is needed for our main theorem.
\begin{lem}(c.f.\cite[Theorem 1.3.6]{Cerrai2001second})
\label{lem:deri-mom-bound}
    Let Assumption \ref{ass:coefficient_function_assumption} and  inequality \ref{assu:a3'-mono-condi}  
be fulfilled.
Then, the solution to SDE \eqref{eq:Problem_SDE}  $ X(t, x; s), 0 \leq t \leq s \leq T $ is $ 2q+2 $ times differentiable
with respect to the initial data $ x \in \R^d$ and for any 
$ q \geq 1 $
it holds that
\begin{align}
\label{eq:deri-esti-mom-bound}
 \sup_
 { x \in \mathbb{R}^d } \E [ |D X ( t, x; s ) |^{\mathds{P}} ]
 & \leq  C(T,\mathds{P}),
    \quad 
    1 \leq \mathds{P} \leq p_0, \\
 \sup_
 { x \in \mathbb{R}^d }
 \E [ |D^j X ( t, x; s ) |^{\mathds{P}} ]
 & \leq  C(T,\mathds{P},j),
\quad \mathds{P} \in \big[1,\frac{p_0}{((2r+1-j)\vee 0)+j}\big], \quad j = 2,\ldots,2q+2,
\end{align}
where $ C(T,\mathds{P},j) $ is a 
constant that depends on $ T,\mathds{P},j $. Here $D^j$'s refer to derivatives in $x$.
\end{lem}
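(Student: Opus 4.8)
The plan is to realize $D^j X(t,x;s)$ as the solution of the \emph{variation (first- and higher-order) equations} obtained by formally differentiating \eqref{eq:Problem_SDE} in the initial datum $x$, and then to bound its moments by Itô's formula, the monotone structure \eqref{eq:assum-f-g}, and Grönwall's inequality, proceeding by induction on the derivative order $j$. Differentiability of the flow itself I would justify in the standard way, by showing that the difference quotients solve approximate variation equations and converge in $L^{\mathds{P}}$ using the same a priori bounds below.

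First I would treat the first variation $\eta^{(1)}_t := D X(t,x;s)$ (equivalently its action on a fixed direction), which solves the linear SDE
\[
\dd \eta^{(1)}_t = D f(X_t)\,\eta^{(1)}_t\,\dd t + \sum_{k=1}^m D g_k(X_t)\,\eta^{(1)}_t\,\dd W^k_t,\qquad \eta^{(1)}_0 = \mathrm{Id}.
\]
Applying Itô's formula to $|\eta^{(1)}_t|^{\mathds{P}}$ and using Cauchy--Schwarz on the Itô-correction term bounds the drift by $\mathds{P}\,|\eta^{(1)}_t|^{\mathds{P}-2}\big(\langle D f(X_t)\eta^{(1)}_t,\eta^{(1)}_t\rangle + \tfrac{\mathds{P}-1}{2}|D g(X_t)\eta^{(1)}_t|^2\big)$. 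For $\mathds{P}\le p_0$ the parenthesis is $\le c_{\mathds{P}}|\eta^{(1)}_t|^2$ by \eqref{eq:assum-f-g} (validity at the smaller exponent follows by discarding the nonnegative diffusion term), so the drift is $\le \mathds{P}c_{\mathds{P}}|\eta^{(1)}_t|^{\mathds{P}}$ \emph{uniformly in} $X_t$. Taking expectations and Grönwall give the first bound in \eqref{eq:deri-esti-mom-bound} for $2\le\mathds{P}\le p_0$ uniformly in $x$, and $1\le\mathds{P}<2$ follows by Jensen. The decisive feature is that \eqref{eq:assum-f-g} removes any dependence on powers of $|X_t|$, so no moment of $X$ is consumed at this order.

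For $j\ge 2$ I would argue by induction. Differentiating the variation equation $j$ times shows that $\eta^{(j)}_t := D^j X(t,x;s)$ solves the \emph{same} homogeneous linear SDE with an additional forcing $(R^{(j)}_t,S^{(j)}_t)$ which, by the Faà di Bruno formula, is a sum of terms $D^\ell f(X_t)\big(\eta^{(i_1)}_t,\dots,\eta^{(i_\ell)}_t\big)$ and $D^\ell g(X_t)(\cdots)$ with $2\le\ell\le j$, $i_1+\cdots+i_\ell=j$, each $i_m<j$. Applying Itô to $|\eta^{(j)}_t|^{\mathds{P}}$, the homogeneous part is controlled exactly as above and the forcing is absorbed by Young's inequality, reducing the estimate, via Grönwall, to bounding $\E|R^{(j)}_s|^{\mathds{P}}$ and $\E|S^{(j)}_s|^{\mathds{P}}$. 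At this point \ref{ass:a1} supplies the growth $|D^\ell f(x)|\le C(1+|x|^{(2r+1-\ell)\vee 0})$ (and \ref{ass:a2} the strictly smaller growth of $D^\ell g$, governed by $\rho\le r+1$, so the diffusion forcing never dominates), the moment bound \eqref{eq:esti_sol_SDE_wrt_initial} supplies moments of $X$ up to order $p_0'\ge p_0$, and the induction hypothesis supplies the moments of the lower-order factors $\eta^{(i_m)}$.

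The main obstacle is to extract the \emph{sharp} admissible range $\mathds{P}\in[1,\tfrac{p_0}{((2r+1-j)\vee0)+j}]$, that is, $\mathds{P}\,\max(2r+1,j)\le p_0$. Distributing the moment order across each forcing term by Hölder indicates that the binding term is the top-order one $D^j f(X_t)\,(\eta^{(1)}_t)^{\otimes j}$, which pairs the coefficient growth exponent $(2r+1-j)\vee0$ with $j$ copies of the first variation and so requires precisely $\mathds{P}\big((2r+1-j)\vee0 + j\big)\le p_0$. The delicate point — and the reason a purely term-by-term Hölder split is not by itself conclusive for the mixed contributions such as $D^2 f(X_t)(\eta^{(j-1)}_t,\eta^{(1)}_t)$ — is that one must use the dissipativity \eqref{eq:assum-f-g} not only for the Grönwall step but to prevent the growth contributed by the homogeneous coefficient $Df(X_t)$ from being charged against the moment budget; equivalently, the decay carried by the fundamental solution of the variation equation must be paired against the powers of $|X_t|$ in the forcing. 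Once the bookkeeping is organized so that only $((2r+1-j)\vee0)+j$ powers are charged, Grönwall closes the induction and yields \eqref{eq:deri-esti-mom-bound}.
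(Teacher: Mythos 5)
Your proposal follows essentially the same route as the paper: the paper also realizes $D^jX$ as the solution of the (forced) variation equations, controls the homogeneous part by It\^o's formula together with \eqref{eq:assum-f-g} and Gr\"onwall (packaged as a general lemma for a linear SDE with random forcing in Appendix \ref{app:gen-SDE-well-pose}), justifies differentiability by showing the difference quotients converge in $L^{p}(\Omega)$, and handles $j\ge 2$ by induction with the Fa\`a di Bruno forcing split by H\"older against the growth in \ref{ass:a1}--\ref{ass:a2}, the moments of $X$, and the lower-order derivatives — exactly your exponent bookkeeping for $j=2$. The only difference is organizational (the paper factors the It\^o/Gr\"onwall step into a reusable forced-equation lemma and a stability corollary), and the paper likewise carries out the details only for $j=1,2$ before invoking induction.
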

This lemma is proved in Appendix \ref{appen:deri-wrt-ini-val}.

The following theorem 
has been known if the coefficients are Lipschtiz continuous \cite{Milstein1985Weak,Milstein1978method}
or  $p_0$ being arbitrarily large in \ref{ass:a3}  \cite{wang2021weak}. 
Here we relax the requirements of \cite{wang2021weak} as in  Remark \ref{rem:assumption-difference}.

\begin{thm}[The approximation theorem, c.f. \cite{wang2021weak}]
\label{thm:fundamental_weak_convergence}
Let Assumption \ref{assm:test-functions} hold.
Suppose 
\begin{enumerate}
[label=\textnormal{(\arabic*)}]
    \item[\mylabel{ass:i}{(i)}] 
(Coefficients of SDEs)
 Assumptions \ref{ass:coefficient_function_assumption} and  \ref{assu:a3'-mono-condi} hold;    
 \item[\mylabel{ass:ii}{(ii)}] 
(Local approximation error)
For some constants $C > 0$ and $\varkappa \geq 0$, and for any $ i_j \in \{ 1, 2, \cdots, d \} $ and 
$ x \in \R^{d} $, the one-step approximation $ Y(t,x;t+h) $ has the following orders of accuracy: 
   \begin{align}  \label{eq:error_of_general_one_step_approximation}
         \Big| \E \Big[ 
            \prod_{j=1}^s 
               \big( \delta_{X,x} \big)^{i_j}
                        \Big]
                 -  \E \Big[
                   \prod_{j=1}^s \big( \delta_{Y,x} \big)^{i_j} 
                         \Big]  \Big|
              &  \leq 
                C ( 1 + |x|^{\varkappa} ) h^{q+1}, 
          \,\,\,
            s=1, \ldots, 2q+1, 
          \\
       \label{eq:estimate_general_one_step_ref_solution}
       \Big \|  \prod_{j=1}^{2q+2}  
       \big( \delta_{X,x} \big)^{i_j}
       \Big \|_{ L^{2} ( \Omega;\mathbb{R} ) }
        & \leq
            C ( 1 + |x|^{\varkappa} )
            h^{ q + 1 }, 
      \\
\label{eq:estimate_general_one_step_approximate_solution}
          \Big \|  \prod_{j=1}^{2q+2}  
         \big( \delta_{Y,x} \big)^{i_j}
         \Big \|_{ L^{2} ( \Omega;\mathbb{R} ) }
         & \leq
         C ( 1 + |x|^{\varkappa} )
         h^{ q + 1 }, 
    \end{align}
where  we denote
      $ ( \delta_{X,x} )^{i_j} := 
                                X^{i_j}(t,x;t+h) -  x^{i_j},   \quad
       ( \delta_{Y,x} )^{i_j} := 
                                Y^{i_j}(t,x;t+h) -  x^{i_j}.
                                $ 
  \item[\mylabel{ass:iii}{(iii)}] 
(Moment bounds) 
There exist constants $ \beta \geq 1 $ and $ C > 0 $ such that for $ p \geq 2 \kappa  \vee (\beta \kappa + \varkappa) $, it holds 
   \begin{equation}
\label{eq:ass_approximation_moment_bound}
         \sup_{ N \in \mathbb{N} }
           \sup_{ 0 \leq n \leq N }
              \E [ |Y_{n}|^{p} ] 
                \leq  
                  C ( 1 + |Y_0|^{\beta p} ).
   \end{equation}  
\end{enumerate}
\noindent
Then we obtain a global weak convergence order of order $ q $, i.e., for some $C>0$ independent of $h$, 
   \begin{equation}
     \label{thm:convergence_order}
        \big | \E \big [
           \varphi \big ( X(t_0,X_0;T) \big ) 
                      \big ]
         - \E \big[
              \varphi \big ( Y(t_0,Y_0;t_N) \big )
                      \big ] \big |
                 \leq 
     C \big(1+|X_0|^{\beta(\beta \kappa + \varkappa)}\big)
              h^{q}.
   \end{equation} 	
\end{thm}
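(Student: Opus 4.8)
The plan is to adapt Milstein's fundamental theorem: we transport the global weak error to a telescoping sum of one-step errors measured against the solution of the backward Kolmogorov equation, and then control every term using only the finitely many moments furnished by \eqref{eq:esti_sol_SDE_wrt_initial}, Lemma \ref{lem:deri-mom-bound} and condition \ref{ass:iii}. First I would introduce the value function $u(s,x):=\E[\varphi(X(s,x;T))]$ for $0\le s\le T$, so that $u(T,\cdot)=\varphi$ and, since the SDE \eqref{eq:Problem_SDE} is autonomous, the Markov (flow) property gives the semigroup identity $u(t_n,x)=\E[u(t_{n+1},X(t_n,x;t_{n+1}))]$. Writing $X_{t_n}=X(t_0,X_0;t_n)$ and using $Y_0=X_0$, $u(T,Y_N)=\varphi(Y_N)$ and $Y_{n+1}=Y(t_n,Y_n;t_{n+1})$, I would telescope
\[
\E\big[\varphi(X(t_0,X_0;T))\big]-\E\big[\varphi(Y_N)\big]
=\sum_{n=0}^{N-1}\E\Big[u\big(t_{n+1},X(t_n,Y_n;t_{n+1})\big)-u\big(t_{n+1},Y(t_n,Y_n;t_{n+1})\big)\Big].
\]
Because $Y_n$ is $\mathcal F_{t_n}$-measurable while the one-step maps use fresh randomness on $(t_n,t_{n+1}]$, conditioning on $\mathcal F_{t_n}$ freezes $Y_n$ as a deterministic initial value $x$, so each summand is a one-step weak error of the functional $u(t_{n+1},\cdot)$ to which the local estimates \ref{ass:ii} apply pointwise in $x$ before the substitution $x\mapsto Y_n$.

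Second, I would show $u(s,\cdot)\in\mathbf{C}^{2q+2}$ with polynomially growing derivatives. Differentiating under the expectation and applying the chain rule (Fa\`a di Bruno), $D^{\alpha}u(s,x)$ is a finite sum of terms $\E[D^{\beta}\varphi(X(s,x;T))\prod_i D^{\gamma_i}X(s,x;T)]$ with $|\beta|\le|\alpha|$ and $\sum_i|\gamma_i|=|\alpha|$. Bounding $|D^{\beta}\varphi|\le L(1+|X|^{\kappa})$ by Assumption \ref{assm:test-functions} and splitting the remaining factors by H\"older's inequality, the growth $|D^{\alpha}u(s,x)|\le C(1+|x|^{\kappa})$ follows from the moment bound on $X$ in \eqref{eq:esti_sol_SDE_wrt_initial} together with the bounds on the flow derivatives $D^jX$ in Lemma \ref{lem:deri-mom-bound}. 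I expect this to be the main obstacle: unlike the all-moments setting of \cite{wang2021weak}, every H\"older splitting must be arranged so that its exponents land inside the admissible ranges $\mathds P\le p_0$ for the flow derivatives and $p_1\le p_0'-\varepsilon$ for $X$ itself, and the $j$-dependent ranges in Lemma \ref{lem:deri-mom-bound} (reflecting the super-linear weight $(2r+1-j)\vee0$) are exactly what make this finite-moment bookkeeping delicate. Tracking these admissible ranges is precisely where the relaxed hypotheses of Remark \ref{rem:assumption-difference} are consumed.

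Third, with the regularity of $u$ in hand, I would Taylor-expand $u(t_{n+1},\cdot)$ about the frozen point $x$ up to order $2q+1$,
\[
u(t_{n+1},X(t_n,x;t_{n+1}))-u(t_{n+1},Y(t_n,x;t_{n+1}))
=\sum_{1\le|\alpha|\le 2q+1}\tfrac{1}{\alpha!}D^{\alpha}u(t_{n+1},x)\big[(\delta_{X,x})^{\alpha}-(\delta_{Y,x})^{\alpha}\big]+R,
\]
where $R$ is the integral remainder built from $D^{2q+2}u$ and the order-$(2q+2)$ increment products. Taking expectations, each low-order term is bounded via \eqref{eq:error_of_general_one_step_approximation} by $C(1+|x|^{\kappa})(1+|x|^{\varkappa})h^{q+1}$, using the growth of $D^{\alpha}u$; the remainder is controlled by Cauchy--Schwarz, since $\|\prod_j(\delta_{X,x})^{i_j}\|_{L^2}$ and $\|\prod_j(\delta_{Y,x})^{i_j}\|_{L^2}$ are $O(h^{q+1})$ by \eqref{eq:estimate_general_one_step_ref_solution}--\eqref{eq:estimate_general_one_step_approximate_solution}, while $\|D^{2q+2}u(t_{n+1},\xi)\|_{L^2}$ along the connecting segment is finite because its polynomial growth composes with the one-step moments of $X(t_n,x;t_{n+1})$ and $Y(t_n,x;t_{n+1})$. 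Hence each conditional local error is bounded by $C(1+|x|^{\lambda})h^{q+1}$ for an explicit $\lambda$ built from $\kappa$ and $\varkappa$.

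Finally, I would substitute $x=Y_n$, take full expectations, and invoke \ref{ass:iii}. The conversion from numerical moments to powers of the initial data through \eqref{eq:ass_approximation_moment_bound} introduces the factors of $\beta$: once when bounding the segment growth of $D^{2q+2}u$ at the numerical one-step image in terms of $|Y_n|$, and once when passing from $\E[1+|Y_n|^{\beta\kappa+\varkappa}]$ to $C(1+|Y_0|^{\beta(\beta\kappa+\varkappa)})$, which together yield $\E[\text{local error}_n]\le C(1+|Y_0|^{\beta(\beta\kappa+\varkappa)})h^{q+1}$ uniformly in $n$. Here the hypothesis $p\ge 2\kappa\vee(\beta\kappa+\varkappa)$ is exactly what guarantees the moments of $Y_n$ required by all the preceding H\"older and Cauchy--Schwarz steps (the $2\kappa$ accounting for the squared remainder factors). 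Summing the $N=T/h$ identical bounds turns $h^{q+1}$ into $h^{q}$ and delivers \eqref{thm:convergence_order}.
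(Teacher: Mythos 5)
Your proposal follows essentially the same route as the paper's proof in Appendix \ref{append-proof-weak-conver-theorem}: the same value function $u(t,x)=\E[\varphi(X(t,x;T))]$, the same telescoping decomposition with conditioning on $\sigma(Y_n)$, the same Taylor expansion to order $2q+1$ with the low-order terms controlled by \ref{ass:ii} and the growth bound $|D^\alpha u|\leq C(1+|x|^{\kappa})$ (obtained via Lemma \ref{lem:deri-mom-bound}), the same Cauchy--Schwarz treatment of the order-$(2q+2)$ remainder, and the same final appeal to \ref{ass:iii} to produce the $\beta(\beta\kappa+\varkappa)$ exponent. The proposal is correct and matches the paper's argument, including your correct identification that the finite-moment bookkeeping in the regularity of $u$ is where the relaxed hypotheses are consumed.
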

The proof of this theorem is presented in Appendix \ref{append-proof-weak-conver-theorem}.
\section{Schemes of weak convergence
based on the It\^{o}-Taylor expansion}
\label{sec:second-order-scheme-prensented}
Here we recall the Milstein–Talay method, a one-step approximation based on  the  It\^{o}-Taylor expansion \cite{Talay1984efficient}:
\begin{align}
\label{eq:one-step-Milstein–Talay}
X_{n+1} &= Y_{MT}(t_n,X_{n};t_n+h), \nonumber\\
Y_{MT}(t,x;t+h)
  & = 
  x + f(x) h
    + \sum_{r=1}^{m}
     \int_{t}^{t+h}
      g^r(x) 
    \, \dd W_r(s) \nonumber \\
   & +
    \sum_{r=1}^{m} \sum_{r_1=1}^{m} \int_{t}^{t+h} \int_{t}^{s} 
    \Lambda_{r_1} g^r(x) \, \dd W_{r_1}(s_1) \dd W_r(s) 
  +
   \sum_{r=1}^{m} \int_{t}^{t+h}
  \mathcal{L} g^r(x) h \, \dd W_r(s) 
  \nonumber \\
   & + 
   \sum_{r=1}^{m} 
   \int_{t}^{t+h} \int_{t}^{s} 
\big( \Lambda_{r} f (x) - \mathcal{L} g^r(x) \big)  \, \dd W_r(s_1) \dd s  
  + \frac{h^2}{2}
 \mathcal{L} f (x).
\end{align}
Here $     \Lambda_{r}  =  \sum_{i=1}^{d} g^{i, r} \frac{\partial}{\partial x_{i}}  
    \text{ and }
    \mathcal{L} = f^\top \frac{\partial}{\partial x} + \frac{1}{2} \sum_{r=1}^{m} \sum_{i, j=1}^{d} g^{i, r} g^{j, r} \frac{\partial^{2}}{\partial x_{i} \partial x_{j}}.$
The method is shown to have weak convergence of order two when 
$f$ and $g$ are Lipschitz continuous, see e.g. \cite{Milstein2004stochastic}.  
When Lipschitz continuity conditions are violated and coefficients have polynomial growth instead of linear growth, the Milstein–Talay method can explode at a certain time as in the Euler scheme \cite{Hutzenthaler2011Strong}.
Inspired by modified Euler schemes in the literature, we  propose the following scheme 
\begin{eqnarray}
\label{eq:second-order-new-scheme}
Y_{n+1}
  & = & 
  Y_{n} + \mf{ Y_n } h
    + \sum_{r=1}^{m} 
     \mg{ Y_n } 
    \, \Delta W_r(n) \nonumber \\
 &&  +
    \sum_{r=1}^{m} \sum_{r_1=1}^{m} \int_{t_n}^{t_{n+1}} \int_{t_n}^{s} 
   \mathcal{T}_3 \big(\Lambda_{r_1} g^r( Y_n), h \big) \, \dd W_{r_1}(s_1) \dd W_r(s)
  +
   \sum_{r=1}^{m} \int_{t_n}^{t_{n+1}}
   \mathcal{T}_4 \big( \mathcal{L} g^r(Y_n), h \big) h \, \dd W_r(s)   \nonumber \\
  &&  + 
   \sum_{r=1}^{m} 
   \int_{t_n}^{t_{n+1}} \int_{t_n}^{s} 
\mathcal{T}_5 \big( ( \Lambda_{r} f ( Y_n) - \mathcal{L} g^r(Y_n) ), h \big)  \, \dd W_r(s_1) \dd s
  + \int_{t_n}^{t_{n+1}} 
  \mathcal{T}_6 \big( \mathcal{L} f (Y_n), h \big) \frac{h}{2} \, \dd s,
\end{eqnarray}
where the maps $\mathcal{T}_i(\cdot)$ ($i = 1,\ldots,6 $) are   approximations of `$\cdot$' or zero and will be specified in Section \ref{sec:assumption-tame-function} and 
%
  $   \Delta W_{r}(n)
   = 
        W_r(t_{n+1})- W_r(t_{n}), \,
        n=0,1,2,\ldots,N-1.
$
 
\subsection{Assumptions on the maps for bounded moments}\label{sec:assumption-tame-function}
To obtain moment boundedness of solutions to the scheme  \eqref{eq:second-order-new-scheme}, we assume the following conditions:
\begin{assumption}\label{ass:tame_function_assumption}
\mylabel{ass:tame-h1}{(H1)} (Growth conditions on the $ \mathcal{T}_1 $ - $\mathcal{T}_2$) There exist positive constants $\gamma_1 $, $\gamma_2 $ and $ C $,  such that
 \begin{align}
  \label{con:balanced-EM-sublinear-fghx}
       |\mathcal T_{i}(z,h) |  
       & \leq
     Ch^{-\gamma_i} \wedge |z|, \quad  i=1,2,\quad  z \in \R^d. 
  \end{align} 
\mylabel{ass:tame-h2}{(H2)} (Approximation condition) There exist $ C,\,\tau, \,l_1 >0 $ such that for all $z \in \R^d,$
 \begin{equation}
 \label{con:balanced-EM-difference-f}
      | \mathcal T_{1}(z,h) - z| 
    \leq C h^{\tau} |z|^{l_1}.
 \end{equation}

%
\noindent
\mylabel{ass:tame-h3}{(H3)} (Growth conditions on the $ \mathcal{T}_3 $ - $\mathcal{T}_6$) There exist $\gamma_3 > \frac12$, $\gamma_4 > 1$ and $ C >0 $  such that for all $z \in \R^d,$
 \begin{align}
  \label{con:balanced-EM-sub-T3-5}
       |\mathcal{T}_3(z,h) |  
       & \leq
         Ch^{-\gamma_1} \wedge |z|, \quad
      |\mathcal{T}_4(z,h)|+ |\mathcal{T}_5(z,h) | 
        \leq
         Ch^{-\gamma_3} \wedge |z|,\quad   |\mathcal{T}_6(z,h)|
        \leq
          Ch^{-\gamma_4} \wedge |z| ,
  \end{align} 
where $\gamma_1$ is from \eqref{con:balanced-EM-sublinear-fghx}.
    
\end{assumption}

The condition \ref{ass:tame-h2} in Assumption \ref{ass:tame_function_assumption} may be replaced by the following condition:

\noindent
    \mylabel{ass:tame-h2p}{(H2')} (One-sided Lipschitz condition) For a sufficiently large $ {p}_{\mathcal{T}} \geq 2 $, there is a constant $ C > 0 $ such that,
 \begin{equation}
  \label{con:one-sided-lip-con-balanced}
       \langle 
	  x, 
	  \mf{x}
	   \rangle 	  
    +  \frac{ p_{\mathcal{T}}-1}{2}
     |\sum_{r=1}^{m}\mg{x}|^{2} \leq C (1+\abs{x}^2),  \quad 
     x \in \R^d.
 \end{equation}
It is shown in Section \ref{sec:moment-bound-second-order-scheme} that 
 Assumption \ref{ass:tame_function_assumption} is sufficient to obtain moment bounds of the scheme \eqref{eq:second-order-new-scheme}.

\subsection{Modified Euler schemes and the maps $\mathcal{T}_i$} \label{sec:discussion-modified-sch}

In this section, we 
revisit schemes of at most first-order weak convergence when $\mathcal{T}_i=0$, $i=3,4,5,6$ in \eqref{eq:second-order-new-scheme}. 
These choices lead to the modified Euler scheme:
\begin{equation}
\label{eq:Euler-scheme}
Y_{n+1} = 
  Y_{n}   
  + \mf{ Y_n } h
    + \sum_{r=1}^{m} 
      \mg{ Y_n }  
    \, \Delta W_r(n),
\end{equation}
where
 $ \Delta W_{r}(n):= W_r(t_{n+1})- W_r(t_{n}), $ 
 $ n \in \{ 0,1,2,\ldots,N-1 \}. $
Moreover, the maps $\mathcal T_{1}, \mathcal T_{2}$ satisfy
$ \mathcal{T}_1 \colon \R^d \times (0,1) \rightarrow \R^d $ and
$ \mathcal{T}_2 \colon \R^d \times (0,1) \rightarrow \R^d$. 
Recall that the maps $\mathcal{T}_i(z,h)$, $i=1, 2$ represent an approximation of $z$ when values of $\abs{z}$ are not excessively large. Moreover, $\mathcal{T}_i(z,h)$ is bounded in $z$ and is bounded above by a negative power of the time step size $h$. 

In Figure \ref{fig:tame-function-sketch}, we sketch some popular choices of the map $\mathcal{T}_i(z,h)$ for modified Euler schemes which have at most first-order weak convergence \cite{wang2021weak}.

\begin{center}
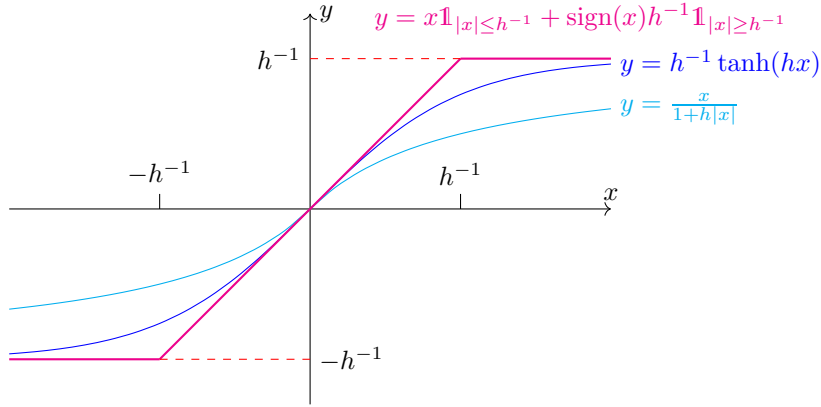

\begin{tikzpicture}[scale=0.18]
 \draw [->] (-20,0) -- (20,0) node [above] {$x$};
\draw [->] (0,-13) -- (0,13) node [right] {$y$};
\draw [ color = blue , smooth , domain =-20:20]
  plot (\x, {10*(exp(\x/10)- exp(-\x/10))/(exp(\x/10)+ exp(-\x/10))})   node[right] {$y=h^{-1}\tanh(h x)$};
  
  \draw [ color = cyan, smooth , domain =-20:20]
  plot (\x, {\x/(1 + abs(\x)/10)})   node[right] {$y=\frac{x}{1+h\abs{x} }$};
  
\draw [color= magenta]  (18,11) node [above] {$y=x \mathds{1}_{\abs{x}\leq h^{-1}}+ \text{sign}(x)h^{-1}\mathds{1}_{\abs{x}\geq h^{-1}}$};
  \draw [ color =magenta, smooth, thick , domain =-10:10]
  plot (\x, {\x}); 
  
  \draw [ color =red , dashed , domain =0:10]  plot (\x, 10) ;
  
   \draw [ color =magenta , solid,thick , domain =10:20]
  plot (\x, 10) ;

    \draw [ color =magenta, solid,thick , domain =-20:-10]
  plot (\x, -10) ;

    \draw [ color =red, dashed , domain =-10:0]
  plot (\x, -10) ;

  \draw [solid] (-10,0.0) -- (-10,1)  node[above] {$-h^{-1}$};
  \draw [solid](10,0) -- (10,1) node[above] {$h^{-1}$};
  
  \draw (0,-10) node[right] {$-h^{-1}$};
  \draw (0,10) node[left] {$h^{-1}$};
\end{tikzpicture}
\captionof{figure}{Illustration of approximations of truncation functions
$x \mathds{1}_{\abs{x}\leq h^{-1}}+\text{sign}(x)h^{-1}\mathds{1}_{\abs{x}\geq h^{-1}}$,  $   \frac{x}{1+h\abs{x}}$, and 
$h^{-1}\tanh(h x)$.}
\label{fig:tame-function-sketch}
\end{center}

%
We assume that 
$\mathcal T_{1}(z,h)$ and $\mathcal T_{2}(z,h)$ are deterministic{\footnote{It is possible to apply randomized functions but we only consider deterministic functions for brevity.}} and we assume either \ref{ass:tame-h1}, \ref{ass:tame-h2} in Assumption \ref{ass:tame_function_assumption} or \ref{ass:tame-h1}, \ref{ass:tame-h2p} hold, depending on the specific choices of $\mathcal T_{1}$ and $\mathcal T_{2}$.  
\begin{lem}
\label{lem:Euler-scheme-moment-bound}
Suppose Assumption \ref{ass:coefficient_function_assumption}, \ref{assu:a3'-mono-condi}, and 
 \ref{ass:tame-h1} and \ref{ass:tame-h2} in Assumption \ref{ass:tame_function_assumption} hold. 
 Consider the scheme \eqref{eq:Euler-scheme} and let $p_0'$ in \eqref{eq:co_mono_condi} be sufficiently large. Then there exist
$\beta_1 \geq 1 $ and $ C > 0 $ independent of $ h $ such that 
\begin{eqnarray} 
\label{eq:balanced-euler-result-moment-bound}
\E  \big[ 
\left|Y_{n}\right|^{p}
\big] 
& \leq  &
C 
\big( 1 + | X_{0} | ^{ {p} \beta_1} 
\big),
\quad   
\text{for all \,} p \in [ 1, \mathds{B}_1],\quad n = 1,2 \cdots N,
\end{eqnarray}
where $\beta_1
 = 1+ \frac{(p\gamma_1+1)\mathds{G}_1}{p} \wedge \frac{(1+\frac12 p+ p\gamma_2)\mathds{G}_1}{p}$ and $\mathds{B}_1 = \frac{ p_0'-\varepsilon -\mathds{G}_1}{1+ \gamma_1 \mathds{G}_1} \wedge \frac{p_0'-\varepsilon-\mathds{G}_1}{1+ (\frac12 + \gamma_2) \mathds{G}_1}$  
with $\gamma_1$, $\gamma_2$ from \ref{ass:tame-h1} in Assumption \ref{ass:tame_function_assumption}.
Here $\mathds{G}_1 = 6r \vee \frac{(2r+1)l_1-1}{\tau}$ with $ r \geq 0 $ from \ref{ass:a1} and $ \tau, l_1 > 0 $ from \ref{ass:tame-h2}.

If \ref{ass:tame-h2} is replaced by \ref{ass:tame-h2p}, then 
$\mathds{G}_1 = 6r$ and 
$\mathds{B}_1 = \frac{ p_{\mathcal{T}} -\mathds{G}_1}{1+ \gamma_1 \mathds{G}_1} \wedge \frac{p_{\mathcal{T}}-\mathds{G}_1}{1+ (\frac12 + \gamma_2) \mathds{G}_1}$.
\end{lem}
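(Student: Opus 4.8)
The plan is to derive a one-step recursive bound for the conditional $p$-th moment and then close the estimate with a discrete Gronwall argument. Writing $\E_n[\,\cdot\,] := \E[\,\cdot\mid\mathcal{F}_{t_n}]$, $A_n := \mf{Y_n}$ and $B_n^r := \mg{Y_n}$, the scheme \eqref{eq:Euler-scheme} reads $Y_{n+1} = Y_n + A_n h + \sum_{r=1}^m B_n^r\,\Delta W_r(n)$. First I would expand
\[
|Y_{n+1}|^2 = |Y_n|^2 + 2h\langle Y_n, A_n\rangle + 2\Big\langle Y_n, \sum_r B_n^r\Delta W_r(n)\Big\rangle + \Big|A_n h + \sum_r B_n^r\Delta W_r(n)\Big|^2,
\]
take $\E_n$ (so the martingale cross term vanishes and $\E_n[\Delta W_r(n)\Delta W_{r'}(n)]=\delta_{rr'}h$), and raise to the power $p/2$ using an elementary binomial inequality to organize the increment of $|Y_n|^p$ into a leading part plus higher-order remainders.

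The leading drift and diffusion contributions are controlled by coercivity. I would split $\langle Y_n, A_n\rangle = \langle Y_n, f(Y_n)\rangle + \langle Y_n, A_n - f(Y_n)\rangle$, bound $\langle Y_n, f(Y_n)\rangle + \tfrac{p_0'-1-\varepsilon}{2}|g(Y_n)|^2 \le c_0 + c'_{p_0'}|Y_n|^2$ by \eqref{eq:co_mono_condi}, and use $|B_n^r|\le |g^r(Y_n)|$ from \ref{ass:tame-h1} so that the diffusion quadratic term $\sum_r|B_n^r|^2\E_n|\Delta W_r(n)|^2$ is absorbed into the $|g(Y_n)|^2$ slack produced by coercivity. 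The approximation error $\langle Y_n, A_n - f(Y_n)\rangle$ is estimated through \ref{ass:tame-h2} and the growth \eqref{eq:f-growth} by $\lesssim h^\tau |Y_n|\,(1+|Y_n|^{2r+1})^{l_1}$. All the surviving terms (the $|A_n|^2 h^2$ contribution, the higher binomial terms from the $p/2$-expansion, and this taming error) are super-linear in $|Y_n|$; the central idea is to use the boundedness $|A_n|\le C h^{-\gamma_1}$, $|B_n^r|\le C h^{-\gamma_2}$ from \ref{ass:tame-h1} to trade each excess power of $|Y_n|$ for a positive power of $h$, turning a dangerous factor into the form $C h^{\theta}|Y_n|^{p}$ with $\theta\ge 1$ plus a lower-order constant.

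This bookkeeping is where the exponents in the statement are born. The quantity $\mathds{G}_1 = 6r \vee \frac{(2r+1)l_1 - 1}{\tau}$ is precisely the maximal excess power of $|Y_n|$ above $p$ that appears: the $6r$ branch from products of the polynomially growing coefficients generated in the $p/2$-expansion, and the $\frac{(2r+1)l_1-1}{\tau}$ branch from balancing the taming error $h^\tau|Y_n|^{1+(2r+1)l_1}$ against a power of $h$. Requiring the net power consumed to stay within the available moment order $p_0'-\varepsilon$ (so that the coercivity-driven bound \eqref{eq:esti_sol_SDE_wrt_initial} and an induction hypothesis on the moments of $Y_n$ remain applicable) forces $p\le \mathds{B}_1$, while the loss incurred when paying for the worst excess power via the initial data yields the amplification $\beta_1$. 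Collecting everything I expect an inequality $\E_n[|Y_{n+1}|^p]\le (1+Ch)|Y_n|^p + Ch\,(1+|Y_n|^p)$ for $p\le\mathds{B}_1$; taking expectations and iterating over $n\le N$ with $Nh=T$ gives \eqref{eq:balanced-euler-result-moment-bound} by discrete Gronwall. When \ref{ass:tame-h2} is replaced by \ref{ass:tame-h2p}, the combination $\langle Y_n, A_n\rangle + \tfrac{p_{\mathcal{T}}-1}{2}|\sum_r B_n^r|^2$ is bounded directly by \eqref{con:one-sided-lip-con-balanced}, so the approximation-error branch disappears and $\mathds{G}_1$ collapses to $6r$, changing the range to $\mathds{B}_1 = \frac{p_{\mathcal{T}}-\mathds{G}_1}{1+\gamma_1\mathds{G}_1}\wedge\frac{p_{\mathcal{T}}-\mathds{G}_1}{1+(\frac12+\gamma_2)\mathds{G}_1}$.

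The main obstacle, and the step I would treat most carefully, is the uniform-in-$h$ control of the super-linear remainder terms: one must check that \emph{every} excess power of $|Y_n|$ produced by the coefficients' polynomial growth can genuinely be paid for by the $h^{-\gamma_i}$ taming bounds, so that the net power of $|Y_n|$ never exceeds $p$ with more than an $O(h)$ coefficient, while the auxiliary moments invoked stay below $p_0'-\varepsilon$. Getting this exponent accounting right --- the interplay of $\gamma_1,\gamma_2,\tau,l_1,r$ that defines $\mathds{G}_1,\beta_1,\mathds{B}_1$ --- is the delicate part; once the one-step inequality is in hand, the Gronwall closure is routine.
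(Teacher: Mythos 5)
Your outline assembles the right ingredients (expansion of $|Y_{n+1}|^{p}$, coercivity via \eqref{eq:co_mono_condi} for the leading drift/diffusion pair, the taming bounds of \ref{ass:tame-h1}--\ref{ass:tame-h2} for the remainders, discrete Gronwall), and you even locate the two sources of the exponent $\mathds{G}_1$ correctly. But there is a genuine gap at the step you yourself flag as delicate: the claim that every super-linear remainder can be converted into $Ch^{\theta}|Y_n|^{p}$ with $\theta\ge 1$ using only $|\mathcal{T}_i|\le Ch^{-\gamma_i}$ is false for the relevant parameter ranges. Take the term $h^{2}|Y_n|^{p-2}|\mf{Y_n}|^{2}$ with $\gamma_1=1$ and $r=1$ (tamed/balanced Euler): interpolating $|\mathcal{T}_1|^{2}\le (Ch^{-\gamma_1})^{2s}|f(Y_n)|^{2(1-s)}$ forces $s\ge 2r/(2r+1)$ to keep the power of $|Y_n|$ at most $p$, and then the power of $h$ is $2-2s\gamma_1\le 2-4/3<1$; similarly the taming error $h^{1+\tau}|Y_n|^{p-1+(2r+1)l_1}$ can only be reduced to $O(h)|Y_n|^{p}$ if $|Y_n|\le h^{-\tau/((2r+1)l_1-1)}$. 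So the one-step inequality $\E_n[|Y_{n+1}|^{p}]\le (1+Ch)|Y_n|^{p}+Ch(1+|Y_n|^{p})$ you aim for cannot be established unconditionally, and a coefficient like $1+Ch^{2/3}$ does not survive iteration over $N=T/h$ steps. A further warning sign: if your recursion did close, you would obtain $\beta_1=1$, whereas the lemma asserts $\beta_1=1+\frac{(p\gamma_1+1)\mathds{G}_1}{p}\wedge\frac{(1+\frac12 p+p\gamma_2)\mathds{G}_1}{p}>1$; the amplification is not a decorative loss but a structural feature of the proof.

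The missing idea is localization. The paper's proof works on the nested events $\Omega_{\mathcal{R},n}=\{\sup_{0\le i\le n}|Y_i|\le \mathcal{R}\}$ with the specific choice $\mathcal{R}=\mathcal{R}(h)=h^{-1/\mathds{G}_1}$: on these events all excess powers of $|Y_n|$ are genuinely absorbed into constants (this is exactly where $\mathds{G}_1=6r\vee\frac{(2r+1)l_1-1}{\tau}$ is calibrated), and the Gronwall argument yields $\E[\mathds{1}_{\Omega_{\mathcal{R},n}}|Y_n|^{\bar p}]\le C(1+|Y_0|^{\bar p})$ for $\bar p\le p_0'-\varepsilon$. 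The complement is then handled separately: writing $\mathds{1}_{\Omega_{\mathcal{R},n}^{c}}=\sum_i\mathds{1}_{\Omega_{\mathcal{R},i-1}}\mathds{1}_{|Y_i|>\mathcal{R}}$, applying H\"older with exponents $p',q'$ and Chebyshev, and controlling $(\E[|Y_n|^{pp'}])^{1/p'}$ through the crude pathwise bound $|Y_{n+1}|\le|Y_n|+Ch^{1-\gamma_1}+\sum_r Ch^{-\gamma_2}|\Delta W_r|$. It is precisely this balancing of $\mathcal{R}^{-\bar p/q'}$ against the negative powers $h^{-p\gamma_1}$, $h^{-p/2-p\gamma_2}$ that produces the formulas for $\mathds{B}_1$ and $\beta_1=1+\bar p/(pq')$. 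Your proposal contains no analogue of the complement estimate, so even if the on-event recursion were repaired, the stated constants could not be recovered. The \ref{ass:tame-h2p} variant you describe is directionally right (the taming-error branch of $\mathds{G}_1$ disappears), but it too must be run through the same localization machinery.
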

This lemma can be proved by extending the results in \cite{Zhang2017preserving,chen2019mean}.
See Appendix \ref{append-modified-moment-bound} for details. 

Note that $\beta_1$ plays the role of $\beta$ in Theorem \ref{thm:fundamental_weak_convergence} \ref{ass:iii}.
 Applying Theorem \ref{thm:fundamental_weak_convergence}, with the moment bounds for the modified Euler scheme \eqref{eq:Euler-scheme} in Lemma \ref{lem:Euler-scheme-moment-bound},
 we obtain the following theorem.  

\begin{thm}[Weak convergence order for the modified Euler scheme \eqref{eq:Euler-scheme}]
 \label{thm:MEM-global-conver-order}
Let Assumption \ref{assm:test-functions} on the test function $\varphi$ hold. 
    Let Assumptions \ref{ass:coefficient_function_assumption} and  \ref{assu:a3'-mono-condi} be satisfied. 
    Additionally, assume that 
\ref{ass:tame-h1}, \ref{ass:tame-h2} (or \ref{ass:tame-h2p}) 
 in Assumption \ref{ass:tame_function_assumption} hold. 
Assume further that there exist $ C > 0 $, $q_0>0$ and 
$\eta_{q_0} \geq 0$ such that
\begin{eqnarray}
\label{eq:ass-euler-difference}
    |z- \mathcal{T}_{i}(z,h)| & \leq & C (1+|z|^{\eta_{q_0}}) h^{q_0}, \quad   z \in \R^d,\quad i = 1,2.  
\end{eqnarray}
 If $\mathds{P}$ in Lemma \ref{lem:deri-mom-bound} is no less than $4q+4$ and  $\mathds{B}_1$ in Lemma \ref{lem:Euler-scheme-moment-bound} 
 is no less than $2 \kappa  \vee (\beta_1 \kappa + \varkappa)$,
 the scheme \eqref{eq:Euler-scheme} has a global weak convergence of order $ q \wedge 1 \leq q_0 $:
\begin{equation*}
	\big |\E \big [
	 \varphi \big (
	  X(t_0,X_0;T)
	   \big ) \big ]
	  -\E \big[
	 \varphi \big (
	  Y( t_0,Y_0;T )
	  \big )\big ] \big |
	 \leq 
	   C   ( 1 + | X_0 |^{\beta_1(\beta_1 \kappa+ \varkappa)} )
	    h^{q\wedge 1},
 \end{equation*}
where 
 $\beta_1$ is from \eqref{eq:balanced-euler-result-moment-bound},
 $\varkappa=(2q+2)(2r+1)(1 \vee \eta_{q_0})$ and   $r$ comes from \eqref{ass:drift-f}.
\end{thm}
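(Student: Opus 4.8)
The plan is to apply the approximation theorem (Theorem \ref{thm:fundamental_weak_convergence}) to the modified Euler scheme \eqref{eq:Euler-scheme}, so that the whole argument reduces to verifying its three hypotheses \ref{ass:i}--\ref{ass:iii}. Hypothesis \ref{ass:i} holds by assumption. Hypothesis \ref{ass:iii} is precisely Lemma \ref{lem:Euler-scheme-moment-bound}: the moment bound \eqref{eq:balanced-euler-result-moment-bound} plays the role of \eqref{eq:ass_approximation_moment_bound} with $\beta=\beta_1$, and the standing requirement $\mathds{B}_1 \geq 2\kappa\vee(\beta_1\kappa+\varkappa)$ guarantees the bound is available for every $p$ in the range demanded by Theorem \ref{thm:fundamental_weak_convergence}. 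The requirement $\mathds{P}\geq 4q+4$ in Lemma \ref{lem:deri-mom-bound} supplies the finite moments of $D^j X(t,x;s)$ up to $j=2q+2$ that the proof of Theorem \ref{thm:fundamental_weak_convergence} consumes internally. Thus the only substantive work is hypothesis \ref{ass:ii}, the one-step local error estimates.

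To establish \ref{ass:ii} I would insert the untamed Euler--Maruyama map as a reference point, $\bar Y(t,x;t+h) := x + f(x)h + \sum_{r=1}^m g^r(x)\Delta W_r$, and split each increment (schematically, componentwise) as
\[
\delta_{Y,x} - \delta_{X,x} = \big(\delta_{Y,x} - \delta_{\bar Y,x}\big) + \big(\delta_{\bar Y,x} - \delta_{X,x}\big),
\]
treating the two brackets by different mechanisms. For the second bracket (untamed Euler versus the exact flow) I would expand $X(t,x;t+h)$ by the It\^o--Taylor formula and match moments term by term, exactly as in the Lipschitz theory of \cite{Milstein2004stochastic}, but now controlling every remainder through the polynomial growth bounds \eqref{eq:f-growth}, \eqref{eq:g-polynomial-growth-con} and the exact-solution moment estimate \eqref{eq:esti_sol_SDE_wrt_initial}. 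This yields \eqref{eq:error_of_general_one_step_approximation}--\eqref{eq:estimate_general_one_step_approximate_solution} for $\bar Y$ with the first-order value $q=1$ and with a polynomial weight whose exponent is governed by the growth rate $2r+1$ of $f$ and the number $2q+2$ of factors in the products.

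For the first bracket (tame versus untamed) I would use the approximation condition \eqref{eq:ass-euler-difference}: combined with $|f(x)| \leq C(1+|x|^{2r+1})$ it gives $|\mathcal{T}_1(f(x),h)-f(x)| \leq C(1+|x|^{(2r+1)\eta_{q_0}})h^{q_0}$, and likewise for $\mathcal{T}_2$ and each $g^r$. Since the drift perturbation is multiplied by $h$ and the diffusion perturbation has mean zero, the $s=1$ moment difference is $O\big((1+|x|^{(2r+1)\eta_{q_0}})h^{q_0+1}\big)$; propagating the same bounds through the products for $s=2,\dots,2q+2$ (via H\"older together with the independence and moment structure of $\Delta W_r$) shows the tame modification perturbs the untamed estimates only at order $h^{q_0+1}$ in the relevant norms. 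Because the Euler structure caps the attainable order at $1$ while the tame approximation caps it at $q_0$, combining the two brackets gives \ref{ass:ii} with $q$ replaced by $q\wedge 1$, valid precisely when $q\wedge 1\leq q_0$, and with the weight consolidating to $\varkappa=(2q+2)(2r+1)(1\vee\eta_{q_0})$. Theorem \ref{thm:fundamental_weak_convergence} then delivers the global rate $h^{q\wedge 1}$.

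The main obstacle is the bookkeeping in \ref{ass:ii}: one must simultaneously track the stochastic order in $h$ and the polynomial growth in $|x|$ across all products $\prod_{j=1}^s(\delta_{\cdot,x})^{i_j}$ up to $s=2q+2$, and verify that the $L^2$ estimates \eqref{eq:estimate_general_one_step_ref_solution}--\eqref{eq:estimate_general_one_step_approximate_solution} close with the claimed exponent $\varkappa$. This requires that sufficiently high moments of both $X$ and $Y_n$ are finite to absorb those weights, which is exactly what $\mathds{P}\geq 4q+4$ and $\mathds{B}_1 \geq 2\kappa\vee(\beta_1\kappa+\varkappa)$ are arranged to guarantee; checking the compatibility between the moment order \emph{required} here and the order actually \emph{furnished} by Lemmas \ref{lem:deri-mom-bound} and \ref{lem:Euler-scheme-moment-bound} is the delicate point of the argument.
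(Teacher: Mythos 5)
Your proposal is correct and follows exactly the route the paper intends: the paper omits the detailed proof of this theorem, stating only that it is "similar to that in \cite{wang2021weak}" with the local error conditions verified "as in Lemmas \ref{lem:norm-esti-one-step-new-MT} and \ref{lem:balanced-one-step-error}" — and those lemmas use precisely your decomposition, namely reducing to Theorem \ref{thm:fundamental_weak_convergence}, invoking Lemma \ref{lem:Euler-scheme-moment-bound} for hypothesis \ref{ass:iii}, and verifying \ref{ass:ii} by inserting the unmodified classical one-step map as a pivot and controlling the tame-versus-untamed discrepancy through the approximation condition \eqref{eq:ass-euler-difference}. The order cap $q\wedge 1\leq q_0$ and the weight $\varkappa=(2q+2)(2r+1)(1\vee\eta_{q_0})$ emerge from your two brackets exactly as in the paper's second-order analogue.
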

The proof is similar to that in \cite{wang2021weak} and we omit the proof.
Here we use $\varkappa$ from \cite{wang2021weak}, which can be verified as in Lemmas \ref{lem:norm-esti-one-step-new-MT} and \ref{lem:balanced-one-step-error}. 
The parameters in Table \ref{table:para-essential-euler} are essential for weak convergence order of numerical schemes \eqref{eq:Euler-scheme}.
\begin{table}[htb]
\centering
\scalebox{0.8}{
\begin{tabular}{c|c|c|c|c|c} \hline 
 $\mathds{P}$ &  $\gamma_1$, $\gamma_2$  & $\mathds{G}_1$, 
    $\mathds{B}_1$,  
    $\beta_1$ 
   & $\eta_{q_0}$, $q_0$
   & $\kappa$
   & $\varkappa$
    \\ \hline 
derivatives & %
approximation & moment bounds & approximation & test function &minimum moments \\\hline
Lemma \ref{lem:deri-mom-bound} &
Assumption \ref{ass:tame_function_assumption} &
Lemma \ref{lem:Euler-scheme-moment-bound}  
& Theorem \ref{thm:MEM-global-conver-order} & Assumption \ref{assm:test-functions} & Theorem \ref{thm:MEM-global-conver-order} \\\hline 
\end{tabular}}
\caption{Essential parameters for weak convergence orders of the modified Euler scheme \eqref{eq:Euler-scheme}}
\label{table:para-essential-euler}
\end{table}
   \begin{rem}
   We may extend the condition \eqref{eq:ass-euler-difference} to general modified schemes. For example,
   for parameterized maps $\mathcal{T}_{i}(z,h;w)$, one may derive similar results as in Theorem \ref{thm:MEM-global-conver-order}, if $\abs{z-\mathcal{T}_i(z,h;w)} \leq C(1+|z|^{\eta_{q_0}}+|w|^{\eta_{q_0}})h^{q_0}$.
\end{rem}

Below we list some examples of modified Euler scheme
 \eqref{eq:Euler-scheme}, where $\mathcal T_{1}$ and $\mathcal T_{2}$ are explicitly given. We also summarize  the key parameters to determine the weak convergence orders of these schemes in
 Table \ref{table:para-cal-mom-bound}.
\begin{itemize}
\item 
fully tamed Euler (c.f. \cite{sabanis2016euler}), for any $ z,\,w \in \R^d$ and $0< \alpha_1 \leq 1$,
\begin{equation}
\label{exam:eq-fully-tamed-Euler}
  \mathcal T_{1}(z,h;w)
  =  \frac{z}{1+h^{\alpha_1}\abs{z} +h^{\alpha_1} \abs{w}},  \quad 
\mathcal T_{2}(w,h;z) =\frac{w}{1+h^{\alpha_1}\abs{z} +h^{\alpha_1} \abs{w}}.
\end{equation}
\item  balanced scheme \cite{Zhang2017preserving}
\begin{equation}
\label{eq:balanced-scheme-first-order}
    \mathcal T_{i}(z,h) = h^{-1} \tanh(h z),\quad  
    i=1,2,  \quad  z \in \R^d.
 \end{equation}
\item tamed Euler scheme \cite{chen2019mean}
\begin{equation}
\label{eq:tamed-scheme-first-order}
   \mathcal T_{i}(z,h) = \frac{z}
                   {1+h|z|}, \quad i=1,2, \quad  z \in \R^d.
 \end{equation}
\item modified Euler scheme  \cite{wang2021weak}
   \begin{equation}  
\label{eq:first-order-scheme-MES}
\mathcal T_{1}(z,h) = \frac{z}
                   {1+h|z|^2}, \quad
\mathcal T_{2}(w,h;z) = \frac{w}
                   {1+h|z|^2}, \quad z, w \in \R^d.
    \end{equation}
\item  truncation  scheme
\begin{equation}
\label{eq:scheme-truncation}
    \mathcal T_i(z,h) = z\mathds{1}_{\abs{z}\leq h^{-\alpha}} + \vartheta {\rm sgn}(z) h^{-\alpha}\mathds{1}_{\abs{z}> h^{-\alpha}}, \quad \vartheta =0 
    \text{ or } 1, \quad i=1,2.
\end{equation}
\end{itemize}

\begin{table}[!tb]
\centering
\scalebox{0.8}{
\begin{tabular}{c|c|c|c|c|c|c|c} \hline 
   scheme  & $\gamma_1$  & $\gamma_2$  & $\mathds{G}_1$    
   & $\mathds{B}_1$  
   & $\varkappa$ 
   & $\eta_{q_0}$
   & $q_0$
    \\ \hline 
 fully tamed  \eqref{exam:eq-fully-tamed-Euler}    &  $\alpha_1$     & $\alpha_1$ &     $6r$  
 & $  \frac{p_{\mathcal{T}}-\mathds{G}_1}{1+(\frac{1+\alpha_1}{2})\mathds{G}_1} $    
 & $(2q+2)(2r+1)\eta_{q_0}$
& $1+\varsigma$  &  $\varsigma\alpha_1$ \\  \hline 
 balanced 
\eqref{eq:balanced-scheme-first-order}    &    $1$   & $1$      & 
$6r \vee \frac{(3-2\varsigma)(2r+1)-1}{2-2\varsigma}$  
&   $ \frac{p_0'-\varepsilon-\mathds{G}_1}{1+\frac{3}{2} \mathds{G}_1}$  
& $ (2q+2)(2r+1)\eta_{q_0}$ & $3-2\varsigma$ 
& $2-2\varsigma$\\ \hline 
tamed Euler  
\eqref{eq:tamed-scheme-first-order}  &  $1$ & $1$      & $ 6r \vee \frac{(2r+1)(1+\varsigma)-1}{\varsigma}$ 
& $  \frac{p_0'-\varepsilon-\mathds{G}_1}{1+\frac{3}{2} \mathds{G}_1}$    
& $ (2q+2)(2r+1) \eta_{q_0} $ & $1+\varsigma$ & $\varsigma$ \\ \hline 
truncation \eqref{eq:scheme-truncation} & $\alpha$ &
$\alpha$ & $6r \vee \frac{(2r+1)(1+\frac{\epsilon}{\alpha})-1}{\epsilon}$   
& $  \frac{p_0'-\varepsilon-\mathds{G}_1}{1+(\frac12+\alpha) \mathds{G}_1}$ 
&  $ (2q+2)(2r+1)\eta_{q_0}$ & $1+\frac{\epsilon}{\alpha}$
& $\epsilon$
\\ \hline
\end{tabular}}
\caption{Essential parameters for weak convergence orders of the scheme \eqref{eq:Euler-scheme}. Here $\varsigma\in [0,1]$ and $\epsilon>0$.}
\label{table:para-cal-mom-bound}
\end{table}
The fully tamed Euler \eqref{exam:eq-fully-tamed-Euler} satisfies condition \ref{ass:tame-h1} from Assumption \ref{ass:tame_function_assumption} with $\gamma_1=\alpha_1$ and $\gamma_2 =\frac{\alpha_1}{2}$. Due to \eqref{eq:co_mono_condi}, $\mathcal{T}_1(f(x),h;g(x))$ and $\mathcal{T}_2(g(x),h;f(x))$ satisfy \ref{ass:tame-h2p}, by Lemma \ref{lem:Euler-scheme-moment-bound}, $\mathds{G}_1= 6r$. Furthermore, due to 
$ \abs{z-\mathcal{T}_i(z,h;w)}=h^{\alpha_1}\abs{z}\frac{\abs{z}+ \abs{w}}{1+h^{\alpha_1}\abs{z}+h^{\alpha_1}\abs{w}}\leq h^{\varsigma \alpha_1} \abs{z}(\abs{z}^{\varsigma}+\abs{w}^{\varsigma}),$ condition \eqref{eq:ass-euler-difference} holds with $q_0 =\varsigma \alpha_1$ and $\eta_{q_0}=1+\varsigma$.

For the balanced scheme \eqref{eq:balanced-scheme-first-order}, we need to verify the condition \ref{ass:tame-h2} from Assumption \ref{ass:tame_function_assumption}.
In fact, by \eqref{eq:tanh-function-property}, 
we have  
\begin{align}
 \abs{h^{-1}\tanh(hz) -z}\leq 
h^{-1}\abs{\tanh(hz) -hz}\leq  h^{-1}\abs{hz}^{3-2\varsigma} =  h^{2-2\varsigma} \abs{z}^{3-2\varsigma},  \quad 0\leq \varsigma\leq 1,   
\end{align}
i.e.,
$\tau=2-2\varsigma$ and $l_1=3-2\varsigma$. 
Then,
$\mathds{G}_1= 6r \vee \frac{(3-2\varsigma)(2r+1)-1}{2-2\varsigma}$, $\eta_{q_0} = 3-2\varsigma$ and $q_0=2-2\varsigma$.

For the tamed Euler \eqref{eq:tamed-scheme-first-order}, condition \ref{ass:tame-h1} is satisfied with $\gamma_1 = \gamma_2=1$. Furthermore,
we have 
$\abs{z-\mathcal{T}_1(z,h)}= h\abs{z}\frac{\abs{z}}{1+h^2\abs{z}}\leq h^{\varsigma} \abs{z}^{1+\varsigma},
\, 0 \leq \varsigma \leq 1.$
Then, condition \ref{ass:tame-h2} from Assumption \ref{ass:tame_function_assumption} hold with $\tau = \varsigma$ and $l_1 = 1+\varsigma$. So, we get $\mathds{G}_1= 6r \vee \frac{(2r+1)(1+\varsigma)-1}{\varsigma}$, $\eta_{q_0}=1+\varsigma$ and $q_0 = \varsigma$.

In the modified Euler scheme \eqref{eq:first-order-scheme-MES}, 
$\mathcal{T}_2(g(x),h;f(x))$ 
does not satisfy \ref{ass:tame-h1} in Assumption \ref{ass:tame_function_assumption}. We refer interested readers to  \cite{wang2021weak} for the proof of bounded moments when $p_0$ is arbitrarily large.

 It's straightforward to check that for the truncation scheme \eqref{eq:scheme-truncation}, $\abs{\mathcal T_i(z,h)} \leq h^{-\alpha} \wedge \abs{z}$, and thus \ref{ass:tame-h1} in Assumption \ref{ass:tame_function_assumption} hold with $\alpha= \gamma_1 = \gamma_2 $. For any $\epsilon > 0$, we also have
\begin{equation}
\label{eq:verify-h2-trun-scheme}
\abs{\mathcal{T}_1 (z,h) - z}
 =  \abs{\mathcal {T}_1(z,h) -z}\mathds{1}_{\abs{z}> h^{-\alpha}} \leq  \abs{z-\vartheta {\rm sgn}(z) h^{-\alpha}}\mathds{1}_{\abs{z}> h^{-\alpha}}  \leq (1 + \vartheta) \abs{z}^{1+\frac{\epsilon}{\alpha}}h^{\epsilon}. 
\end{equation}
Subsequently, \ref{ass:tame-h2} in Assumption \ref{ass:tame_function_assumption} is satisfied with $\tau = \epsilon$ and $l_1 =1+\frac{\epsilon}{\alpha}$. One can show $\mathds{G}_1= 6r \vee \frac{(2r+1)(1+\frac{\epsilon}{\alpha})-1}{\epsilon}$, $\eta_{q_0} = 1+\frac{\epsilon}{\alpha}$ and $q_0 = \epsilon$.
\begin{rem}The moment bounds in Table \ref{table:para-cal-mom-bound} are consistent with those in the literature. The balanced scheme \eqref{eq:balanced-scheme-first-order} and the tamed Euler scheme \eqref{eq:tamed-scheme-first-order}, as discussed in \cite{Zhang2017preserving} and \cite{chen2019mean},
exhibit the same moment bound for $1 \leq p \leq \mathds{B}_1$. 
\end{rem}
\subsection{Weak convergence order for the scheme 
\eqref{eq:second-order-new-scheme}}\label{sec:moment-bound-second-order-scheme}
Next, we consider the weak convergence order of the scheme 
\eqref{eq:second-order-new-scheme}. According to Theorem \ref{thm:fundamental_weak_convergence}, we need to show the moment bounds of numerical solutions.
\begin{lem}[\bf Moment bounds of numerical solutions \eqref{eq:second-order-new-scheme}]
\label{lem:new-scheme-moment-bound} 
Let Assumptions
\ref{ass:coefficient_function_assumption},
\ref{assu:a3'-mono-condi}, and \ref{ass:tame_function_assumption}  hold. 
Let $p_0'$ in \eqref{eq:co_mono_condi} be sufficiently large. 
Then there exist
$ \beta_2 \geq 1 $ and $ C > 0 $ independent of $ h $ such that 
\begin{eqnarray} 
  \label{eq:second-order-new-scheme-moment-bound-result}
   \E  \big[ 
   \left|Y_{n}\right|^{p}
   \big] 
  & \leq & 
   C 
\big( 1 + | X_{0} | ^{ \beta_2 p} 
 \big),
 \quad   
 \text{for all \,} p \in [ 1, \mathds{B}_2],\quad  n=1,2, \ldots, N,
\end{eqnarray}
where { 
$\beta_2 = 1+  (\beta_1-1) \wedge \frac{(1-\frac12 p+ p\gamma_3) \mathds{G}_1}{p} \wedge \frac{(1- p+ p\gamma_4) \mathds{G}_1}{p}$}, 
and $ \mathds{B}_2 = 
\mathds{B}_1\wedge \frac{ p_0'-\varepsilon -\mathds{G}_1}{1+(\gamma_3-\frac12)\mathds{G}_1} \wedge \frac{ p_0'-\varepsilon-\mathds{G}_1}{1+(\gamma_4 -1 )\mathds{G}_1}$. 
Also, $\gamma_3$, $\gamma_4$ are from Assumption \ref{ass:tame_function_assumption}. Here $\mathds{G}_1$, $\beta_1$ and $\mathds{B}_1$  come from Theorem \ref{thm:MEM-global-conver-order}.

If \ref{ass:tame-h2} in Assumption \ref{ass:tame_function_assumption} is replaced by \ref{ass:tame-h2p}, then 
$\mathds{G}_1=6r$, and $\mathds{B}_2=\mathds{B}_1 \wedge \frac{ p_{\mathcal{T}} -\mathds{G}_1}{1+(\gamma_3-\frac12)\mathds{G}_1} \wedge \frac{ p_{\mathcal{T}}-\mathds{G}_1}{1+(\gamma_4 -1 )\mathds{G}_1}$.
  \end{lem}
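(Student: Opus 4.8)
The plan is to treat the scheme \eqref{eq:second-order-new-scheme} as a perturbation of the modified Euler scheme \eqref{eq:Euler-scheme}, whose moment bounds are already available from Lemma \ref{lem:Euler-scheme-moment-bound}, and to control the four extra iterated-integral terms carrying $\mathcal{T}_3$–$\mathcal{T}_6$ by the growth condition \ref{ass:tame-h3}. Concretely, I would write the one-step increment as $Y_{n+1} - Y_n = I_n + R_n$, where $I_n = \mathcal{T}_1(f(Y_n),h)h + \sum_r \mathcal{T}_2(g^r(Y_n),h)\Delta W_r(n)$ is exactly the modified Euler increment and $R_n$ collects the remaining terms. The first step is to record, conditionally on $\mathcal{F}_{t_n}$, the sizes of the pieces of $R_n$: using the $L^2$-isometry for the iterated Wiener integrals one finds that the $\mathcal{T}_3$-term is of order $h(h^{-\gamma_1}\wedge|\cdot|)$, the $\mathcal{T}_4$- and $\mathcal{T}_5$-terms of order $h^{3/2}(h^{-\gamma_3}\wedge|\cdot|)$, and the $\mathcal{T}_6$-term of order $h^{2}(h^{-\gamma_4}\wedge|\cdot|)$. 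Since $\gamma_3 > 1/2$ and $\gamma_4 > 1$, the latter three are genuinely of order higher than $h$, which is what makes them absorbable.

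Next I would derive the key recursive inequality by expanding $\E[|Y_{n+1}|^p \mid \mathcal{F}_{t_n}]$. The terms coming purely from $I_n$ reproduce the computation in the proof of Lemma \ref{lem:Euler-scheme-moment-bound}, where the coercivity \eqref{eq:co_mono_condi} together with \ref{ass:tame-h1} and \ref{ass:tame-h2} (or \ref{ass:tame-h2p}) yields a bound of the form $(1+Ch)|Y_n|^p + Ch(1+|Y_n|^{\cdots})$ valid for $p\le\mathds{B}_1$ with blow-up exponent $\beta_1$. The cross-terms $\langle \cdots, R_n\rangle$ and the pure $R_n$ contributions are then estimated using the bounds from the first step; the decisive mechanism is the cap $h^{-\gamma_i}\wedge|\cdot|$, which lets one trade any excess power of $|Y_n|$ (beyond the moment order already controlled) against a positive power of $h$. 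Balancing the $h$-order gained against the extra moment order spent is precisely what produces the new thresholds $\frac{p_0'-\varepsilon-\mathds{G}_1}{1+(\gamma_3-\frac12)\mathds{G}_1}$ and $\frac{p_0'-\varepsilon-\mathds{G}_1}{1+(\gamma_4-1)\mathds{G}_1}$ in $\mathds{B}_2$, and the corresponding terms $\frac{(1-\frac12 p+p\gamma_3)\mathds{G}_1}{p}$ and $\frac{(1-p+p\gamma_4)\mathds{G}_1}{p}$ in $\beta_2$; taking the minimum with the Euler values $\mathds{B}_1$, $\beta_1$ accounts for whichever term binds.

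Having established $\E[|Y_{n+1}|^p \mid \mathcal{F}_{t_n}] \le (1+Ch)|Y_n|^p + Ch(1+|X_0|^{\cdots})$, I would take total expectations and close the estimate by the discrete Gronwall inequality over $n=0,\dots,N-1$ with $Nh=T$, obtaining \eqref{eq:second-order-new-scheme-moment-bound-result}. The case in which \ref{ass:tame-h2} is replaced by \ref{ass:tame-h2p} is handled identically, except that $\mathds{G}_1=6r$ and $p_0'-\varepsilon$ is replaced by $p_{\mathcal{T}}$ in the thresholds, exactly as in the corresponding statement of Lemma \ref{lem:Euler-scheme-moment-bound}.

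The main obstacle I expect is not any single estimate but the careful bookkeeping of the admissible moment range and the blow-up exponent. Each time a factor $h^{-\gamma_i}$ is invoked to bound a capped map, one must verify that the remaining power of $|Y_n|$ still lies within the moment order already secured (through $\mathds{B}_1$ and the coercivity), and that the net power of $h$ stays at least $1$ so the term can be folded into the Gronwall recursion. Making these constraints simultaneously consistent for all four extra terms — and showing that they combine into the stated closed-form expressions for $\mathds{B}_2$ and $\beta_2$ — is the delicate part; the probabilistic estimates themselves are routine once the iterated integrals are bounded by the It\^o isometry and \ref{ass:tame-h3}.
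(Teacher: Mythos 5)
Your high-level skeleton (view \eqref{eq:second-order-new-scheme} as the modified Euler scheme plus a remainder $R_n$ built from $\mathcal{T}_3$--$\mathcal{T}_6$, estimate the remainder via \ref{ass:tame-h3} and the It\^o isometry, then run a one-step recursion and Gronwall) matches the spirit of the paper's proof, and your accounting of the $h$-orders of the four extra terms ($h^{1}$, $h^{3/2}$, $h^{3/2}$, $h^{2}$) is correct. But there is a genuine gap: you never introduce the localization events $\Omega_{\mathcal{R}(h),n}=\{\sup_{0\le i\le n}|Y_i|\le \mathcal{R}(h)\}$ with $\mathcal{R}(h)=h^{-1/\mathds{G}_1}$, and without them the conditional recursion $\E[|Y_{n+1}|^p\mid\mathcal{F}_{t_n}]\le(1+Ch)|Y_n|^p+Ch(\cdots)$ that you propose to feed into Gronwall cannot be established. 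The obstruction already appears in the Euler part: to use the coercivity \eqref{eq:co_mono_condi} you must replace $\mathcal{T}_1(f(Y_n),h)$ by $f(Y_n)$, and \ref{ass:tame-h2} leaves a signless remainder of size $h^{\tau+1}|Y_n|^{(2r+1)l_1+p-1}$ (and the pure-$R_n$ and higher-order terms leave $h^{3}|Y_n|^{4r+2\rho+p-2}$, $h^{4}|Y_n|^{8r+p}$, etc.). For an unbounded random variable $Y_n$ these cannot be dominated pathwise by $Ch|Y_n|^p$; the cap $h^{-\gamma_i}\wedge|z|$ alone does not let you ``trade excess powers of $|Y_n|$ against powers of $h$'' --- that trade only works after restricting to $|Y_n|\le h^{-1/\mathds{G}_1}$, which is exactly why $\mathds{G}_1$ enters the statement.

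Relatedly, you misattribute the origin of $\mathds{B}_2$ and $\beta_2$. In the paper, the recursion on the localized events yields $\E[\mathds{1}_{\Omega_{\mathcal{R},n}}|Y_n|^{\bar p}]\le C(1+|Y_0|^{\bar p})$ with \emph{no} degradation of the exponent and for all $\bar p\le p_0'-\varepsilon$; the thresholds $\frac{p_0'-\varepsilon-\mathds{G}_1}{1+(\gamma_3-\frac12)\mathds{G}_1}$, $\frac{p_0'-\varepsilon-\mathds{G}_1}{1+(\gamma_4-1)\mathds{G}_1}$ and the exponent $\beta_2>1$ come entirely from the complementary event: one bounds $|Y_{n+1}|$ pathwise by $|Y_0|+Cnh^{1-\gamma_1}+Cnh^{2-\gamma_4}+(\text{martingale terms with }h^{-\gamma_2},h^{-\gamma_1},h^{-\gamma_3})$ using only the $h^{-\gamma_i}$ caps, and then applies Chebyshev and H\"older with an exponent $q'$ tuned to cancel the negative powers of $h$ against $\mathcal{R}(h)^{-\bar p/q'}$. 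That balancing of $q'$ is where every one of the stated formulas for $\mathds{B}_2$ and $\beta_2$ is produced. So to complete your argument you need to (i) prove the recursion only on $\Omega_{\mathcal{R}(h),n}$, and (ii) add the separate Chebyshev--H\"older estimate of $\E[\mathds{1}_{\Omega_{\mathcal{R}(h),n}^c}|Y_n|^p]$; as written, your final Gronwall step is applied to an inequality that does not hold.
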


Note that $\beta_2$ plays the role of $\beta$ in Theorem \ref{thm:fundamental_weak_convergence} \ref{ass:iii}. The proof for the above lemma is presented in Section \ref{subsec:moment-bound-second-order}.

By Theorem \ref{thm:fundamental_weak_convergence}, we need the one-step approximation of the scheme 
\eqref{eq:second-order-new-scheme} to be of certain order.
To this end, we assume the following on the maps $\mathcal{T}_i's$.
\begin{assumption}
\label{ass:diff-T1-T6-new-scheme-MT}
For $ \mathcal{T}_i \colon  \R^d \times (0,1) \rightarrow \R^d $, there exist $ C > 0 $, $q_0>0$ and $\eta_{q_0} \geq 0$ such that for all $  z \in \R^d $,
\begin{eqnarray}
    |z- \mathcal{T}_{i}(z,h)| & \leq & C (1+|z|^{\eta_{q_0}}) h^{q_0}, \quad i = 1,2,3, \nonumber  \\
     |z- \mathcal{T}_{j}(z,h)| & \leq & C (1+|z|^{\eta_{q_0}}) h^{q_0-1}, \quad j = 4,5,6.
\end{eqnarray}
\end{assumption}
Under the above assumptions, by Theorem \ref{thm:fundamental_weak_convergence}, our main result is summarized as follows.

\begin{thm}[Weak convergence order for the scheme \eqref{eq:second-order-new-scheme}]
 \label{thm:MMT-global-conver-order}
Let Assumption \ref{assm:test-functions} hold. 
    Let Assumptions \ref{ass:coefficient_function_assumption} and  \ref{assu:a3'-mono-condi} be satisfied. 
    Additionally, assume that 
\ref{ass:tame-h1}, \ref{ass:tame-h2} (or \ref{ass:tame-h2p}), \ref{ass:tame-h3}
 in Assumption \ref{ass:tame_function_assumption}, and Assumption \ref{ass:diff-T1-T6-new-scheme-MT} hold. If $\mathds{P}$ in Lemma \ref{lem:deri-mom-bound} is no less than $4q+4$ and $\mathds{B}_2 \geq 2\kappa \vee (\beta_2\kappa +\varkappa)$, 
 the scheme \eqref{eq:second-order-new-scheme} has a global weak convergence order $q \wedge 2\leq q_0$:
\begin{equation*}
	\big |\E \big [
	 \varphi \big (
	  X(t_0,X_0;T)
	   \big ) \big ]
	  -\E \big[
	 \varphi \big (
	  Y( t_0,Y_0;T )
	  \big )\big ] \big |
	 \leq 
	   C ( 1 + | X_0 |^{\beta_2(\beta_2 \kappa+\varkappa)} )
	  h^{q\wedge 2},
 \end{equation*}
where  {
 $\beta_2$ is from \eqref{eq:second-order-new-scheme-moment-bound-result}, 
 $ \varkappa  = 
    (2q+2)r_1  \vee r_2 
    $ and $r_1=(4r+1)\eta_{q_0},\,r_2= (2q+1)(6r+1)$ come from Lemmas \ref{lem:norm-esti-one-step-new-MT} and \ref{lem:balanced-one-step-error}. Also, 
  $r$ is from \eqref{ass:drift-f}.}\end{thm}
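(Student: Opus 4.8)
The plan is to verify the three hypotheses \ref{ass:i}--\ref{ass:iii} of the approximation theorem (Theorem~\ref{thm:fundamental_weak_convergence}) for the one-step map defined by the scheme \eqref{eq:second-order-new-scheme} and then read the global order off \eqref{thm:convergence_order}, with the order parameter there set to $q\wedge 2$ (this is legitimate since $\varphi\in\mathbb{G}^{2q+2}$ implies $\varphi\in\mathbb{G}^{2(q\wedge2)+2}$, and likewise the regularity $f,g\in\mathbf{C}^{2q+2}$ exceeds what is needed at order $q\wedge 2$). Hypothesis \ref{ass:i} holds verbatim because Assumptions \ref{ass:coefficient_function_assumption} and \ref{assu:a3'-mono-condi} are assumed. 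Hypothesis \ref{ass:iii} is precisely Lemma~\ref{lem:new-scheme-moment-bound}: it yields the bound \eqref{eq:second-order-new-scheme-moment-bound-result} with exponent $\beta_2$, and the standing assumption $\mathds{B}_2\geq 2\kappa\vee(\beta_2\kappa+\varkappa)$ ensures that the single moment $p=2\kappa\vee(\beta_2\kappa+\varkappa)$ demanded by \eqref{eq:ass_approximation_moment_bound} is finite and uniform in $h$, with $\beta_2$ playing the role of $\beta$. The substantive work is therefore hypothesis \ref{ass:ii}, the local estimates \eqref{eq:error_of_general_one_step_approximation}--\eqref{eq:estimate_general_one_step_approximate_solution}, which are supplied by Lemmas~\ref{lem:norm-esti-one-step-new-MT} and \ref{lem:balanced-one-step-error}.

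For the two $L^2$ norm bounds \eqref{eq:estimate_general_one_step_ref_solution} and \eqref{eq:estimate_general_one_step_approximate_solution} I would estimate each scalar increment in a high $L^p$ norm and then multiply. For the exact flow, the moment bound \eqref{eq:esti_sol_SDE_wrt_initial}, the Burkholder--Davis--Gundy inequality, and the polynomial growth \eqref{eq:f-growth}, \eqref{eq:g-polynomial-growth-con} give $\|(\delta_{X,x})^{i_j}\|_{L^p}\leq C(1+|x|^{2r+1})h^{1/2}$; taking the product of the $2q+2$ factors and applying the generalized H\"older inequality produces the required $h^{q+1}$ with a polynomial weight. The numerical increment is handled identically: the growth conditions \ref{ass:tame-h1} and \ref{ass:tame-h3} bound every $\mathcal{T}_i(z,h)$ by $|z|$, so each summand of $\delta_{Y,x}$ inherits the polynomial growth of $f,g^r,\Lambda_{r_1}g^r,\mathcal{L}g^r,\Lambda_rf,\mathcal{L}f$, while each iterated It\^{o} integral of multiplicity $k$ in \eqref{eq:second-order-new-scheme} contributes $h^{k/2}$, so every summand is again $O(h^{1/2})$ in $L^p$. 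This is the content of Lemma~\ref{lem:norm-esti-one-step-new-MT}.

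The heart of the matter is the weak local error \eqref{eq:error_of_general_one_step_approximation}. Here I would use the unmodified Milstein--Talay map $Y_{MT}(t,x;t+h)$ of \eqref{eq:one-step-Milstein–Talay} as a comparison object and split
\[
\Big|\E\big[{\textstyle\prod_{j}}(\delta_{X,x})^{i_j}\big]-\E\big[{\textstyle\prod_{j}}(\delta_{Y,x})^{i_j}\big]\Big|
\leq \Big|\E\big[{\textstyle\prod_{j}}(\delta_{X,x})^{i_j}\big]-\E\big[{\textstyle\prod_{j}}(\delta_{Y_{MT},x})^{i_j}\big]\Big|
+\Big|\E\big[{\textstyle\prod_{j}}(\delta_{Y_{MT},x})^{i_j}\big]-\E\big[{\textstyle\prod_{j}}(\delta_{Y,x})^{i_j}\big]\Big|.
\]
The first term is the classical second-order It\^{o}--Taylor matching of the Milstein--Talay scheme: all increment moments of orders $s=1,\ldots,5$ agree with those of $X$ up to $O(h^{3})$, hence up to $C(1+|x|^{\varkappa})h^{(q\wedge2)+1}$, using only $f,g\in\mathbf{C}^{2q+2}$ (cf. \cite{Milstein2004stochastic}). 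For the second term I would expand $\delta_{Y,x}-\delta_{Y_{MT},x}$ term by term; each modification appears as a factor $z-\mathcal{T}_i(z,h)$ evaluated at one of $f(x),g^r(x),\Lambda_{r_1}g^r(x),\mathcal{L}g^r(x),\Lambda_rf(x)-\mathcal{L}g^r(x),\mathcal{L}f(x)$, controlled by Assumption~\ref{ass:diff-T1-T6-new-scheme-MT}. The decisive bookkeeping is that $\mathcal{T}_1,\mathcal{T}_2,\mathcal{T}_3$ carry the factor $h^{q_0}$ whereas $\mathcal{T}_4,\mathcal{T}_5,\mathcal{T}_6$ carry only $h^{q_0-1}$; the latter, however, multiply terms already containing one extra deterministic time factor (the explicit $h$ in the $\mathcal{T}_4$ and $\mathcal{T}_6$ terms and the additional $\dd s$-integration in the $\mathcal{T}_5$ term), so the drift-like modifications ($\mathcal{T}_1$ and the $\dd s$-part of $\mathcal{T}_6$) contribute cleanly at order $h^{q_0+1}$ upon taking expectations.

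The delicate terms, and the step I expect to be the main obstacle, are the diffusion-type modifications $g^r-\mathcal{T}_2$, $\Lambda_{r_1}g^r-\mathcal{T}_3$, $\mathcal{L}g^r-\mathcal{T}_4$ and $(\Lambda_rf-\mathcal{L}g^r)-\mathcal{T}_5$, each paired with a Brownian increment $\Delta W_r$, for which a pathwise bound yields only $h^{q_0+1/2}$ or worse. Because these modification factors are deterministic in $x$ and $\E[\Delta W_r]=0$, their contribution to odd-order products vanishes and they survive only when paired at even multiplicity with another Brownian factor inside $\prod_j(\delta)^{i_j}$, which restores the order $h^{q_0+1}$. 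Tracking precisely which odd moments of the increments vanish in each product is the computation carried out in Lemma~\ref{lem:balanced-one-step-error}, which also fixes the polynomial exponent $\varkappa=(2q+2)r_1\vee r_2$. Once $q\wedge2\leq q_0$, every surviving term is $O(h^{(q\wedge2)+1})$, so \ref{ass:ii} holds and Theorem~\ref{thm:fundamental_weak_convergence} delivers the stated global order $q\wedge2$ with prefactor $1+|X_0|^{\beta_2(\beta_2\kappa+\varkappa)}$. Finally, the hypothesis $\mathds{P}\geq 4q+4$ is assumed rather than derived: it is what makes Lemma~\ref{lem:deri-mom-bound} furnish enough moments of the derivatives $D^jX$, $j\leq 2q+2$, for the error-propagation estimate inside the proof of Theorem~\ref{thm:fundamental_weak_convergence}, where products of up to $2q+2$ such derivatives are bounded in $L^2$.
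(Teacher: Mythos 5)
Your proposal is correct and follows essentially the same route as the paper: it verifies hypotheses \ref{ass:i}--\ref{ass:iii} of Theorem \ref{thm:fundamental_weak_convergence} using Lemma \ref{lem:new-scheme-moment-bound} for the moment bounds and Lemmas \ref{lem:norm-esti-one-step-new-MT}--\ref{lem:balanced-one-step-error} for the local estimates, with the weak local error split through the unmodified Milstein--Talay one-step map exactly as in the paper's proof of Lemma \ref{lem:balanced-one-step-error} (classical $O(h^3)$ matching for $J_1$, Assumption \ref{ass:diff-T1-T6-new-scheme-MT} plus vanishing of zero-mean stochastic integrals for $s=1$ and H\"older with the strong one-step estimates for $s\geq 2$ in $J_2$). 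No gaps.
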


We summarize the key parameters in Table \ref{table:para-essential-truncation} for the weak convergence order of the scheme \eqref{eq:second-order-new-scheme}.
\begin{table}[!ht]
\centering
\scalebox{0.7}{
\begin{tabular}{c|c|c|c|c|c} \hline 
$\mathds{P}$ & $\gamma_i$'s 
  & $\mathds{G}_1$, $\mathds{B}_2$,  
   $\beta_2$ 
   & $\eta_{q_0}$, $q_0$
   & $\kappa$
   & $\varkappa$
    \\ \hline 
derivatives &    %
approximation & moment bounds & approximation &  test function &minimum moments \\\hline
Lemma \ref{lem:deri-mom-bound} & 
Assumption \ref{ass:tame_function_assumption} &
Lemma \ref{lem:new-scheme-moment-bound}  &  
Assumption 
\ref{ass:diff-T1-T6-new-scheme-MT} & Assumption \ref{assm:test-functions} & Theorem \ref{thm:MMT-global-conver-order} \\ \hline
\end{tabular}}
\caption{Essential parameters for the weak convergence order of the scheme \eqref{eq:second-order-new-scheme}.}
\label{table:para-essential-truncation}
\end{table}
\begin{rem}
When the maps are parameterized, e.g., $\mathcal{T}_{i}(z,h;w)$, we may derive similar results as in Theorem \ref{thm:MMT-global-conver-order} if we assume $\abs{z-\mathcal{T}_i(z,h;w)} \leq C(1+|z|^{\eta_{q_0}}+|w|^{\eta_{q_0}})h^{q_0}$. 
\end{rem}
\subsubsection{Examples of the  maps $\mathcal{T}_i$'s}
\label{subsec:ex-sec-order-sche}
We next present three examples with specified $\mathcal T_i$'s and the key parameters for weak convergence orders. 
\begin{exm}[Truncation scheme]
\label{exm:truncation}
In the scheme \eqref{eq:second-order-new-scheme}, we take 
\begin{equation}
\label{eq:scheme-proposed-truncation}
    \mathcal T_i(z,h) = z\mathds{1}_{\abs{z}\leq h^{-\alpha}} + \vartheta {\rm sgn}(z) h^{-\alpha}\mathds{1}_{\abs{z}> h^{-\alpha}}, \quad \vartheta =0 
    \text{ or } 1, \quad i=1,2,3,4,5,6.
\end{equation}
\end{exm}
 It's straightforward to check that $\abs{\mathcal T_i(z,h)} \leq h^{-\alpha} \wedge \abs{z}$, and thus \ref{ass:tame-h1} and
 \ref{ass:tame-h3} in Assumption \ref{ass:tame_function_assumption} hold with $\alpha= \gamma_1 = \gamma_2 = \gamma_3= \gamma_4 $. As shown in \eqref{eq:verify-h2-trun-scheme}, 
for any $\epsilon > 0$, 
\ref{ass:tame-h2} in Assumption \ref{ass:tame_function_assumption} is satisfied with $\tau = \epsilon$ and $l_1 =1+\frac{\epsilon}{\alpha}$. 
Additionally, Assumption \ref{ass:diff-T1-T6-new-scheme-MT} is fulfilled with $\eta_{q_0}=1+\frac{\epsilon}{\alpha}$ and $q_0=\epsilon$, by 
\eqref{eq:verify-h2-trun-scheme}. 
Thus, by Lemmas \ref{lem:new-scheme-moment-bound} and \ref{lem:balanced-one-step-error}, we can show that $\mathds{G}_1 = 6r \vee \frac{(2r+1)(1+\frac{\epsilon}{\alpha})-1}{\epsilon}$, $\beta_2
 = 1+ \frac{(1-p+ p \alpha) \mathds{G}_1}{p}$, $p \in [ 1, \mathds{B}_2]$, $\mathds{B}_2 = \frac{p_0'- \varepsilon-\mathds{G}_1}{1+(\frac12+\alpha)\mathds{G}_1}$, and $\varkappa = r_2 \vee 
(2q+2)r_1, 
   r_1=(4r+1) \epsilon, r_2= (2q+1)(6r+1)$.
Let $\epsilon=\alpha = 2$. If $\mathds{P} \geq 4q+4 $ and $\mathds{B}_2 \geq 2\kappa \vee (\beta_2\kappa +\varkappa)$,
the weak convergence order is $q \leq q_0 = \alpha =2$, by Theorem \ref{thm:MMT-global-conver-order}. 
The second-order weak convergence is confirmed numerically in Examples \ref{ex:numerical_ex_linear_diff},
where enough high-order moments of numerical solutions exists.   

 \begin{rem}[Structure preserving schemes]\label{rem:structure-preserving}
 Taking $\vartheta=0$ in 
\eqref{eq:scheme-proposed-truncation} and applying structure-preserving schemes for SDEs with Lipschitz continuous coefficients \footnote{Examples are some explicit schemes for systems with separable Hamiltonians.}  may lead to 
structure-preserving schemes for SDEs with non-Lipschitz coefficients. In this case,  we may also stop the trajectories where truncation is needed and thus we have an acceptance-rejection strategy similar to that in 
\cite{Milstein2005numerical}. 
Therein, the authors discard the approximate trajectories that leave a sufficiently large ball   $ S_R:=\{x:|x|<R\} $. This strategy has been tested on quasi-symplectic schemes for nonlinear stochastic Hamiltonian systems. 
A similar truncation strategy has been considered in \cite{Liu2013strong}, which may be structure-preserving while the scheme requires small time sizes for stability. We will not discuss this research direction for brevity as verifying 
that the resulting schemes admit enough high-order moments for convergence order is necessary. 
\end{rem}
\begin{exm}
\label{exm:balanced}
In the scheme \eqref{eq:second-order-new-scheme}, we propose the balanced scheme as follows{\footnote{For convenience all $\mathcal T_{i}$ are the same. Other choices of these maps are possible, e.g., $\mathcal T_{2}(z,h) =h^{-\frac{3}{2}} \tanh(h^{-\frac{3}{2}} z)$.}} 
: for all $z \in \R^d$, 
\begin{equation}
\label{eq:scheme-proposed-balanced}
    \mathcal T_{i}(z,h) = h^{-2} \tanh(h^{2} z),\quad i=1,2,3,4,5,6.
 \end{equation}
It's clear that \ref{ass:tame-h1} and \ref{ass:tame-h3} from Assumption \ref{ass:tame_function_assumption} are fulfilled when $\gamma_1 = \gamma_2 =\gamma_3 = \gamma_4=2 $.
Also, based on \eqref{eq:tanh-function-property}, it holds for any $\varsigma\in [0,1]$ that
\begin{align}
\abs{\mathcal T_{i}(z,h)-z} \leq h^{-2} (h^2|z|)^{3-2\varsigma}=h^{4-4\varsigma}\abs{z}^{3-2\varsigma}, \quad i=1,2,3,4,5,6.
\end{align}
This implies that \ref{ass:tame-h2} in Assumption \ref{ass:tame_function_assumption} is valid with 
$\tau =4-4\varsigma$ and $l_1=3-2\varsigma$, and Assumption \ref{ass:diff-T1-T6-new-scheme-MT} holds with $q_0=4-4 \varsigma$ and $\eta_{q_0} = 3-2 \varsigma$.
By Lemmas \ref{lem:new-scheme-moment-bound} and \ref{lem:balanced-one-step-error}, $\mathds{G}_1 = 6r \vee \frac{(2r+1)(3-2\varsigma)-1}{4-4 \varsigma}$, $\mathds{B}_2 = \frac{p_0'- \varepsilon-\mathds{G}_1}{1+\frac52\mathds{G}_1}$,  $\beta_2
 = 1+ \frac{(p  +1)\mathds{G}_1}{p}$, $p \in [ 1, \mathds{B}_2]$, and $\varkappa = r_2 \vee 
(2q+2)r_1, r_1=(4r+1)\eta_{q_0}, r_2= (2q+1)(6r+1)$.
By Theorem \ref{thm:MMT-global-conver-order}, the weak convergence order is $q\wedge 2 \leq q_0=2$, if $\mathds{P} \geq 4q+4 $ and $\mathds{B}_2 \geq 2\kappa \vee (\beta_2\kappa +\varkappa)$.
We also present numerical results (Example \ref{ex:numerical_ex_linear_diff}) 
and verify the second-order weak convergence.
\end{exm}

\begin{rem}
The choices of the maps can be made more general as long as the maps satisfy Assumption \ref{ass:diff-T1-T6-new-scheme-MT}. 
We use the hyperbolic tangent function since it has the following properties: 
\begin{align}
\label{eq:tanh-function-property}
\abs{\tanh(y)} &\leq 1, \, \nonumber\abs{\tanh(y)} \leq \abs{y}, \nonumber  \\
\abs{y - \tanh(y)} &\leq \tanh^2(\theta y) \abs{y}, \, \, \hbox{for some} \, \, 0 \leq \theta \leq 1, \nonumber \\
 \abs{y-\tanh y} &\leq 
 \abs{y}^{3-2 \varsigma}, \text{ for any } 
0\leq \varsigma \leq 1.
\end{align}
\end{rem}

Under some certain conditions, Assumption \ref{ass:diff-T1-T6-new-scheme-MT} can be extended when the mappings $\mathcal{T}_{i}$ depend on some $w \in \R^d$, written as $\mathcal{T}_{i} = \mathcal{T}_{i} (z,h;w)$.

\begin{exm}\label{exm:modified}
In the scheme \eqref{eq:second-order-new-scheme}, we apply the following modification maps, where 
\begin{equation}
    \label{eq:scheme-proposed-modified}
    \mathcal{T}_i=
    \mathcal{T}_i (z,h;w) =\frac{z}{1+h^2\abs{z}+h^2\abs{w}}, \quad i=1,2; 
    \quad 
    \mathcal{T}_j =\mathcal{T}_j(z,h) =\frac{z}{1+h^2\abs{z}},\quad j=3,4,5,6.
\end{equation}
Evidently, \ref{ass:tame-h1} and \ref{ass:tame-h3} in Assumption \ref{ass:tame_function_assumption} are fulfilled with $ \gamma_1 = \gamma_2 = \gamma_3 = \gamma_4 = 2. $ Moreover, for any $\varsigma \in [0,1]$, we have 
    \[\abs{z-\mathcal{T}_i(z,h;w)}=
 h^2\abs{z}\frac{\abs{z}+ \abs{w}}{1+h^2\abs{z}+h^2\abs{w}}
 \leq h^{2\varsigma} \abs{z}(\abs{z}^{\varsigma}+\abs{w}^{\varsigma}), \quad i=1,2, \] and 
     \[\abs{z-\mathcal{T}_j(z,h)}=
    h^2\abs{z}\frac{\abs{z}}{1+h^2\abs{z}}\leq h^{2\varsigma} \abs{z}^{1+\varsigma}, \quad j=3,4,5,6.\]  
i.e. \ref{ass:tame-h2} in Assumption \ref{ass:tame_function_assumption} and Assumption \ref{ass:diff-T1-T6-new-scheme-MT} are satisfied with 
$\tau =2 \varsigma$, $l_1=1+\varsigma$, $\eta_{q_0}=1+\varsigma$, and $q_0=2\varsigma$.

It can  be readily verified that \ref{ass:tame-h2p} is satisfied with $p_{\mathcal{T}}=p_0'-\varepsilon$ if we take
\begin{equation}
\label{ex:scheme-satisfy-one-sided-con}
\mathcal{T}_1(f(x),h) = \mathcal{T}_1(f(x),h;g(x)),\quad \mathcal{T}_2(g(x),h)=\mathcal{T}_2(g(x),h;f(x)).
\end{equation}
Subsequently, applying Lemmas \ref{lem:new-scheme-moment-bound} and \ref{lem:balanced-one-step-error} yields that 
$\mathds{G}_1 = 6r$, 
$\mathds{B}_2=\frac{p_{\mathcal{T}}-\mathds{G}_1}{1+\frac52\mathds{G}_1}$, 
$\beta_2
 = 1+ \frac{(p  +1)\mathds{G}_1}{p}$, $p \in [ 1, 
 \mathds{B}_2]$, and $\varkappa = r_2 \vee 
(2q+2)r_1, r_1=(4r+1)(1+\varsigma), r_2= (2q+1)(6r+1)$.
The weak convergence order becomes $q \leq q_0 =2$ when $\mathds{P} \geq 4q+4$ and $\mathds{B}_2 \geq 2\kappa \vee (\beta_2\kappa +\varkappa)$, according to Theorem \ref{thm:MMT-global-conver-order}. We numerically verify this scheme \eqref{ex:scheme-satisfy-one-sided-con} in Example \ref{ex:numerical_ex_linear_diff},
confirming the second-order weak convergence if enough moments of numerical solutions exist.
\end{exm} 
\subsection{Examples of weak convergence order depending on moment bounds}\label{sec:exm-order-calculuation}
\begin{exm}
\label{ex:ex_linear_diff}
Consider the following SDE with Lipschitz diffusion coefficient
\begin{equation}
\label{eq:linear_diffusion_model}
			\dd X(t)  = \big(X(t)- X^3(t) \big) \, \dd t + X(t)  \, \dd W(t),
		\quad t \in (0, T],
		\quad
		X(0)  = x.
\end{equation}
\end{exm}
Assumption \ref{ass:coefficient_function_assumption} is fulfilled with $r=1$, $\rho=1$ while $p_0<+\infty$ and $c_{p_0}=\frac{p_0+1}{2}$ in \eqref{eq:assum-f-g}.  
Also, the inequality \ref{assu:a3'-mono-condi} holds with $p_0'=p_0<+\infty$. Based on Lemma \ref{lem:deri-mom-bound}, we can obtain $\mathds{P} < +\infty$ to ensure estimates of exact solutions and derivatives in $x$.
{As $p_0'$ can be arbitrarily large, $ \mathds{B_1}$ and $ \mathds{B}_2 $ are arbitrarily large, and thus both are larger than $ 2\kappa \vee (\beta_2 \kappa+\varkappa)$. Recall that 
$\varkappa=(2q+2)(4r+1)\eta_{q_0}\vee (2q+1)(6r+1)$ for the scheme 
\eqref{eq:second-order-new-scheme} and 
$\varkappa 
=(2q+2)(2r+1)(1\vee \eta_{q_0})$ for the scheme \eqref{eq:Euler-scheme}}.
By Theorems \ref{thm:MEM-global-conver-order} and \ref{thm:MMT-global-conver-order}, the weak convergence orders are $q_0\wedge 2$ and $q_0 \wedge 1$.

Applying Theorem \ref{thm:MMT-global-conver-order}, we obtain the second-order weak convergence of the truncation scheme \eqref{eq:scheme-proposed-truncation} with $\alpha=2$, the balanced scheme \eqref{eq:scheme-proposed-balanced}, and the modified scheme \eqref{ex:scheme-satisfy-one-sided-con} with $\varphi(x) = \cos x$, $x^2$. 
\begin{table}[htb]
\centering
\scalebox{0.8}{
\begin{tabular}{c|c|c|c|c|c|c|c} \hline 
scheme \eqref{eq:second-order-new-scheme}  
& $\mathds{G}_1$   
& $\mathds{B}_2$  
   & $\varkappa$
   & $2\kappa \vee (\beta \kappa+\varkappa) $
   & $\eta_{q_0}$
   & $q_0$
   & $q$ (order) \\ \hline 
 truncation \eqref{eq:scheme-proposed-truncation} ($\alpha=2$) 
 &  $6$  
 & $  \frac{p_0'-\epsilon-6}{16}$  & $60$  
 &   $60$ 
 & $2$
 & $=\epsilon=2$  
 & $2$ \\  \hline 
 balanced 
  \eqref{eq:scheme-proposed-balanced}  
  &     $6$  
&$  \frac{p_0'-\epsilon-6}{16} $
& $ 60 $  
& $60$   & $2$ & $2$ 
& $2$ \\ \hline 
modified   \eqref{eq:scheme-proposed-modified}   
&$6$  
& $   \frac{p_0'-\epsilon-6}{16}$ 
& $60$     & $60$  & $2$
& $2$  
& $2$ \\ \hline \hline 
scheme \eqref{eq:Euler-scheme} 
   & $\mathds{G}_1$    
   & $\mathds{B}_1$  
   & $\varkappa$  
   & $2\kappa \vee (\beta \kappa+\varkappa)$
   & $\eta_{q_0}$
   & $q_0$ 
   & $q$ (order) \\ \hline 
 fully tamed  \eqref{exam:eq-fully-tamed-Euler} ($\alpha_1=1$)
 & $6$  
 & $ \frac{p_0'-\varepsilon-6}{7} $   
 & $18$ 
  &  $18$  
  & $2$
  & $\alpha_1$
  & $=\alpha_1=1$
  \\  \hline 
 balanced 
\eqref{eq:balanced-scheme-first-order}  
& $6$  
& $ \frac{p_0'-\varepsilon-6}{10}$  
& $24$  &  $24$   & $2$ & $1$ 
& $1$ \\ \hline 
tamed Euler  
\eqref{eq:tamed-scheme-first-order}       
& $6$ 
& $   \frac{p_0'-\varepsilon-6}{10}$    
& $ 24$ &  $24$ & $2$ & $1$ 
& $1$ \\ \hline 
truncation \eqref{eq:scheme-truncation} 
& $6$   
& $   \frac{p_0'-\varepsilon-6}{10}$ 
&  $24$ & $24$ & $2$&
$=\alpha=1$ 
& $1$
\\ \hline
\end{tabular}}
\caption{Essential parameters for weak convergence order of the schemes in Example \ref{ex:ex_linear_diff},  $\varphi(x)=\cos x$. Here $\mathds{P}<+\infty$.}
\label{table:exm-para-cal-first-weak-conv}
\end{table}
By Theorem \ref{thm:MEM-global-conver-order}, we may show first-order weak convergence of the fully tamed Euler \eqref{exam:eq-fully-tamed-Euler} with $\alpha_1=1$, the balanced scheme \eqref{eq:balanced-scheme-first-order}, the tamed Euler scheme \eqref{eq:tamed-scheme-first-order}, the modified Euler scheme \eqref{eq:first-order-scheme-MES}, and the truncation scheme \eqref{eq:scheme-proposed-truncation} with $\alpha = 1 $. 
In Table \ref{table:exm-para-cal-first-weak-conv}, we present the essential parameters in case of $\varphi(x)=\cos x$, where $\kappa = 0$.
The weak convergence orders are numerically verified in Figure \ref{fig:linear_diff_cosx_x_2} in Section \ref{sec:numer-results}.

The next example shows that the weak convergence order deteriorates if the numerical solutions don't have enough high-order moments. 
\begin{exm}
\label{exm:nonlin-diff-equ}
Consider the following stochastic differential equations 
\begin{equation}
\label{eq:sde-critical-exm}
		\dd X(t)  = \big(X(t)-X^3(t) \big) \, \dd t + \sigma X^2(t)  \, \dd W(t),
		\quad t \in (0, T],
		\quad
		X(0)  = x_0.
\end{equation}
\end{exm}
Assumption \ref{ass:coefficient_function_assumption} is fulfilled with  $ r= 1$ and $\rho=2$
while $p_0 =  3/(2\sigma^2) +1$ and $c_{p_0}=1$ 
in \eqref{eq:assum-f-g}. 
Also, we may check that  for all 
$p_0' \leq \frac{2}{\sigma^2} +1$, it holds that 
$    x (x-x^3) + \frac{p_0'-1}{2}\sigma^2 x^4 \leq x^2$.
Take $\varphi(x)=\cos x$ and two sets of model parameters:
\textbf{Case I}:  
$\sigma = 0.1$,   
\textbf{Case II}:  
$\sigma =0.5$. In the calculations, we may observe that $q$ is not an integer while the order $q$ in some previous theorems is required to be an integer. For simplicity, we keep $q$ as is for the discussion while $q$ in assumptions and lemmas should be replaced by 
the rounding of $q$ to its nearest integer. 

 When $\sigma=0.1$, $p_0'\leq 201$, we would like to mention that $\mathds{P}$ in Lemma \ref{lem:deri-mom-bound} is large enough to ensure estimates of exact solutions and derivatives in $x$. For the balanced scheme \eqref{eq:scheme-proposed-balanced}, $\mathds{B}_2=  \frac{p_0'-\epsilon-6}{16} \approx 12.2$, $q_0=4-4\varsigma$, $\eta_{q_0}=3-2\varsigma$, $\varsigma \in [0,1]$. According to Theorem \ref{thm:MMT-global-conver-order}, we need 
$\mathds{B}_2\geq 2\kappa \vee (\beta_2 \kappa+\varkappa)=\varkappa$, i.e. 
$12.2  \geq \varkappa
   = (2q+2)(4r+1)\eta_{q_0}  \vee (2q+1)(6r+1) = 10(q+1)\eta_{q_0}\vee 7(2q+1)
 $. 
Since $q\leq q_0 = 4-4\varsigma$, we take 
$\varsigma = 0.964970$ and thus 
$\eta_{q_0}\approx 1$ and $q \leq 0.14.$
For the scheme 
\eqref{eq:balanced-scheme-first-order}, $\mathds{B}_1 =  \frac{p_0'-\epsilon-6}{10} \approx 19.5$, $q_0=2-2\varsigma$, $\eta_{q_0}=3-2\varsigma$, $\varsigma \in [0,1]$. By Theorem \ref{thm:MEM-global-conver-order}, we need 
$\mathds{B}_1 \geq 2\kappa \vee (\beta_1 \kappa+\varkappa)=\varkappa$, i.e.
$19.5\geq \varkappa=(2q+2)(2r+1)(1 \vee \eta_{q_0}) =
   6(q+1) (1 \vee(3-2\varsigma))$.
As $q\leq q_0 = 2-2\varsigma$, we may take 
$\varsigma \approx 0.965$ and thus 
$q \leq q_0 = 2-2\varsigma
\approx 0.803 $.
For the rest of the schemes, calculations are similar and we present results in Table \ref{tbl:parameter-example-nonlinear-diff-modified-critical}.

Observe that 
when $\sigma=0.5$,  $p_0'\leq 9$ while Theorem \ref{thm:MMT-global-conver-order} requires 
$\varkappa=(2q+2)(4r+1)\eta_{q_0}\vee 
(2q+1)(6r+1)$-th moments of 
numerical solutions. In this case, we cannot draw any conclusion on the convergence order. 
For the scheme \eqref{eq:scheme-proposed-truncation} with $\alpha =2$, the schemes \eqref{eq:scheme-proposed-balanced} and \eqref{ex:scheme-satisfy-one-sided-con}, 
we present numerical results in Example \ref{ex:numerical_non_dri_diff_critical_case}, where we observe a weak convergence order around one. 
%
\begin{table}[!htb]
\centering
\scalebox{0.7}{
\begin{tabular}{c|c|c|c|c|c} \hline 
 scheme \eqref{eq:second-order-new-scheme}  
  &  $\mathds{G}_1$ 
   & $\mathds{B}_2$ 
   &  {$\varkappa
   = 10(q+1)\eta_{q_0}  \vee 7(2q+1)$     }
   & $q_0$  & $q$(order)  \\ \hline  {Truncation \eqref{eq:scheme-proposed-truncation}($\alpha =2$)}   
   & $6$ 
&  $  \frac{p_0'-\varepsilon-6}{16} \approx 12.2 $ 
 & {$10(q+1)(1+\frac{\epsilon}{\alpha}) \vee 7(2q+1)$}  
 & $\epsilon$
 &  {$\approx 0.14$}
 \\  \hline 
Balanced 
  \eqref{eq:scheme-proposed-balanced}  
  &$6$  
& $  \frac{p_0'-\varepsilon-6}{16} \approx 12.2$ 
& $10(q+1)(3-2\varsigma) \vee 7(2q+1)$ & $4-4\varsigma$ & $\approx 0.14$  \\ \hline 
 Modified   \eqref{eq:scheme-proposed-modified}  & $6$  
& $  \frac{p_0'-\varepsilon-6}{16} \approx 12.2 $  & $10(q+1)(1+\varsigma) \vee 7(2q+1)$
   & $2 \varsigma$  
   & $\approx 0.14$
 \\ \hline \hline
 scheme \eqref{eq:Euler-scheme} 
     & $\mathds{G}_1$  & $\mathds{B}_1$ & $\varkappa=(2q+2)(2r+1) \eta_{q_0}$    & $q_0$ & $q$ (order)
     \\ \hline 
 fully tamed  \eqref{exam:eq-fully-tamed-Euler} ($\alpha_1=1$)
    &  $6$   & $ \frac{p_0'-\varepsilon-6}{7} \approx 27.86 $   
  &  $3(2q+2)(1+\varsigma)$ 
  & $=\varsigma=1$
  & $=\varsigma=1$
  \\  \hline 
 balanced 
\eqref{eq:balanced-scheme-first-order}  
& $6$
&  $ \frac{p_0'-\varepsilon-6}{10} \approx 19.5$ 
& $3(2q+2)(3-2\varsigma)$    & $2-2\varsigma$ 
&  {$\approx 0.803$}
 \\ \hline 
tamed Euler  
\eqref{eq:tamed-scheme-first-order}       
&  $6$
&  $ \frac{p_0'-\varepsilon-6}{10} \approx 19.5$
& $3(2q+2)(1+\varsigma)$   & $\varsigma$ &  {$\approx 0.803$} \\ \hline 
truncation \eqref{eq:scheme-truncation} ($\alpha=1$)
&  $6$
& $ \frac{p_0'-\varepsilon-6}{10} \approx 19.5$
& $3(2q+2)(1+\frac{\epsilon}{\alpha})$ 
 & $\epsilon$ & {$ \approx 0.803$}
\\ \hline
\end{tabular}}
\caption{\textbf{Case I}: $\sigma=0.1$. Essential parameters for checking the weak convergence orders of schemes for \eqref{eq:sde-critical-exm} with $\varphi(x)=\cos{x}$.
Here   $2\kappa \vee (\beta \kappa+\varkappa)=\varkappa$,
$\epsilon>0$, 
and $\varsigma\in [0,1]$.}
\label{tbl:parameter-example-nonlinear-diff-modified-critical}
\end{table}
\section{Numerical Experiments}
\label{sec:numer-results}
In this section, we will test the scheme \eqref{eq:second-order-new-scheme} for three examples with
different maps $\mathcal{T}_i$'s: the truncation scheme \eqref{eq:scheme-proposed-truncation} with $\alpha =2$ (denoted by \textbf{TS2}); the balanced scheme \eqref{eq:scheme-proposed-balanced} (\textbf{BS2}); the scheme \ref{eq:second-order-new-scheme} with 
  \eqref{ex:scheme-satisfy-one-sided-con} and $\mathcal{T}_3-\mathcal{T}_6$ from  \eqref{eq:scheme-proposed-modified} (\textbf{MS2}). 
In Example \ref{ex:numerical_non_dri_diff_critical_case}, we also compare these schemes with those of at most first-order weak convergence from Section \ref{sec:discussion-modified-sch}:  the balanced scheme \eqref{eq:balanced-scheme-first-order} (\textbf{BS1}), the modified Euler scheme \eqref{eq:first-order-scheme-MES} (\textbf{MS1}) and the truncation scheme \eqref{eq:scheme-truncation} with $\alpha = 1 $ (\textbf{TS1}).

We present numerical results for schemes in three examples: 
 the first has one-sided Lipschitz drift and Lipschitz diffusion; the second has locally Lipschitz drift and diffusion, where the solution has limited moments; the third is two-dimensional. 
 
 In the experiments, we compare their weak error and computational costs. We consider different test functions $\varphi(x) = x, x^2,\cos x$ in Assumption \ref{assm:test-functions} and we use a large number of  Monte Carlo samples with  a small enough time step size $ h_{ref} $ to compute references: 
 $    \E \big[ \varphi(X_T)\big]
     \thickapprox
    \frac{1}{M} \sum_{i=1}^{M}
    \varphi\big( X_T(\omega_i,h_{ref} )\big).
 $
 Here $M$ is sufficiently large to have negligible statistical errors with the $95\%$ confidence interval.  
 In simulations, we use the Mersenne twister algorithm \cite{matsumoto1998mersenne} to generate pseudorandom numbers.
  The experiments are performed using Matlab R2022a on a desktop (31.7 GB RAM, 12th Gen Intel(R) Core(TM) i9-12900H CPU at 2.50 GHz) with 64-bit Windows 11 operating system.  

\begin{exm}
\label{ex:numerical_ex_linear_diff}
Consider the following SDE in Example \ref{ex:ex_linear_diff}
\begin{equation}
 			\dd X(t)  = \big(X(t)- X^3(t) \big) \, \dd t + X(t)  \, \dd W(t),
		\quad t \in (0, T],
		\quad
		X(0)  = x.
\end{equation}
\end{exm}

In this example, we fix $ T=2 $, the initial condition $ x = 0.5$, and take $h_{ref}=10^{-3}$ and 
$ M = 2 \times 10^6 $. 
In Figure \ref{fig:linear_diff_cosx_x_2}, we present  weak errors of three different numerical schemes (\textbf{TS2}, \textbf{MS2} and \textbf{BS2}) with the test functions $\varphi(x)= x^2 \hbox{,} \cos x$ against different step sizes $ 0.002,0.003,0.004$
on a log-log scale. As predicted in Example \ref{ex:ex_linear_diff}, the weak convergence orders of the proposed schemes are close to $2$.
\begin{figure}[!ht]
\includegraphics[width=0.42\linewidth, height=0.22\textheight]{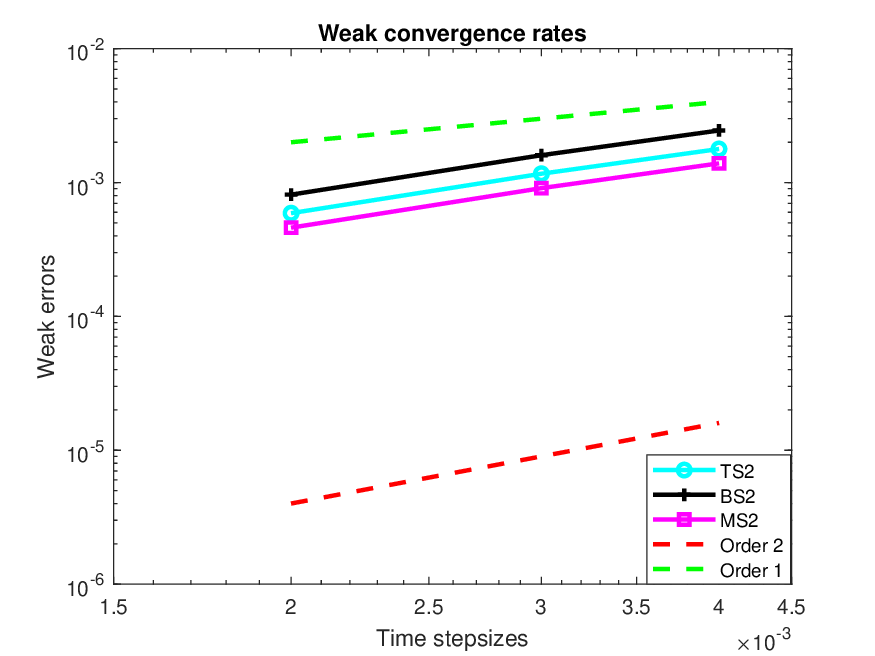} \includegraphics[width=0.42\linewidth, height=0.22\textheight]{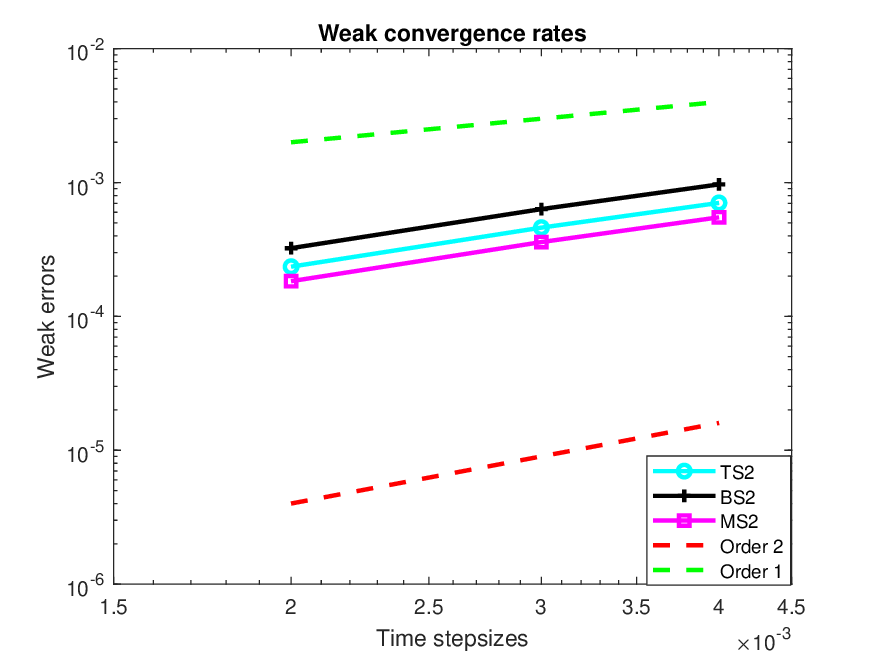}
	\caption
	    {Example \ref{ex:numerical_ex_linear_diff}: weak convergence orders 
with $ \varphi(x) = x^2 $ (Left) and $ \varphi(x) =\cos(x)$ (Right).
	     }
\label{fig:linear_diff_cosx_x_2}	
\end{figure}
\begin{exm}
\label{ex:numerical_non_dri_diff_critical_case}
Consider the equation in Example \ref{exm:nonlin-diff-equ}:
\begin{equation*}
		\dd X(t)  = \big(X(t)-   X^3(t) \big) \, \dd t + \sigma X^2(t)  \, \dd W(t),
		\quad t \in (0, T],\quad 		X(0)  = x_0=0.1. 
\end{equation*}
\end{exm}
Consider  \textbf{Case I} ($\sigma=0.1$) and \textbf{Case II} ($\sigma=0.5$) in Example \ref{exm:nonlin-diff-equ}.
We first investigate the weak convergence order when $\sigma=0.1$. 
For the reference solution, we take $ h_{ref} = 2^{-13}$ and $ M = 3 \times 10^6 $.  
We present in
Table \ref{table:non-dri-diff-one-dim} weak errors of the selected schemes for  different step sizes 
$ h = 2^{-2},2^{-3},2^{-4},2^{-5},2^{-6},2^{-7}$ where we use the test function  $\varphi(x) = \cos x$.
The number of trajectories $M = 3 \times 10^6 $ yields statistical errors that are $10$ times smaller than the reported weak errors. We omit the statistical errors for brevity.  
We underline errors at the order of $ 2 \times 10^{-4} \sim  4 \times 10^{-4}$   
in Table \ref{table:non-dri-diff-one-dim} and their corresponding computational time at the bottom of the table. 
We observe that the performance for the underlined accuracy is comparable among all schemes in the table.
The empirical convergence orders in Table \ref{table:non-dri-diff-one-dim} are higher than the theoretical predictions.
According to Theorems \ref{thm:MMT-global-conver-order} and 
\ref{thm:MEM-global-conver-order}, the weak convergence order with $\varphi(x) = \cos x$ should be less than $1$  for the 
TS2, BS2, MS2, and approximately $1$ for TS1 BS1, MS1 schemes. Here we conjecture that the step sizes are not small enough to apply Theorems 
\ref{thm:MEM-global-conver-order}
and \ref{thm:MMT-global-conver-order}.
To verify our conjecture, we further take $h_{ref} = 10^{-6}$ and $M =10^8$ and present results in Table \ref{table:non-dri-diff-one-dim-small-step-sizes} weak errors of the schemes with $\varphi(x)=\cos x$ for smaller step-size $h=5 \times 10^{-4}$, $10^{-3}$, $2 \times 10^{-3}$, $4 \times 10^{-3}$. We observe that the convergence orders of scheme \eqref{eq:second-order-new-scheme} (\textbf{TS2}, \textbf{BS2}, \textbf{MS2}) decrease to almost one as time step-sizes become smaller and smaller. We could expect that the convergence order should be less than one if we take even smaller time step sizes.
We also observe that the schemes of 
\textbf{TS2}, \textbf{BS2} and \textbf{MS2}
have better accuracy than schemes of 
\textbf{TS1}, \textbf{BS1} and \textbf{MS1}.
\begin{table}[!ht]
\centering
\scalebox{0.9}{
\begin{tabular}
 { c|cc|cc| 
 cc| cc|
 cc| cc}
		\hline
h 
&\textbf{TS2}  &\hspace{-0.4cm} rate & \textbf{BS2} & \hspace{-0.4cm} rate 
&  \textbf{MS2} & \hspace{-0.4cm} rate &  \textbf{TS1} &\hspace{-0.4cm}  rate 
&  \textbf{BS1} & \hspace{-0.4cm} rate & \textbf{MS1}  & \hspace{-0.4cm} rate \\
\hline
$ 2^{-7} $ & 8.11e-7   &\hspace{-0.4cm} -- & 8.11e-7&\hspace{-0.4cm}  --
&1.46e-6 &\hspace{-0.4cm}  -- & \underline{2.21e-4} & \hspace{-0.4cm} --
& \underline{2.22e-4}   &\hspace{-0.4cm}  --  & \underline{3.00e-4} & \hspace{-0.4cm} --   \\
\hline
$ 2^{-6} $ & 2.45e-6 &\hspace{-0.4cm} 1.59  &2.45e-6  & \hspace{-0.4cm} 1.59 
& 5.02e-6 &\hspace{-0.4cm}  1.78  & 4.43e-4    & \hspace{-0.4cm} 1.00
& 4.44e-4  &\hspace{-0.4cm} 1.00  & 5.99e-4    & \hspace{-0.4cm}  1.00  \\
\hline
$ 2^{-5} $ & 8.21e-6   &\hspace{-0.4cm} 1.74 & 8.21e-6  & \hspace{-0.4cm} 1.75 
& 1.83e-5 & \hspace{-0.4cm} 1.87 & 8.78e-4  & \hspace{-0.4cm} 0.99
& 8.79e-4  & \hspace{-0.4cm} 0.99 & 1.18e-3  & \hspace{-0.4cm} 0.98   \\
\hline
$ 2^{-4} $ & 2.93e-5    & \hspace{-0.4cm} 1.83  &2.93e-5 & \hspace{-0.4cm} 1.83
& 6.84e-5 &\hspace{-0.4cm}  1.90  & 1.71e-3  & \hspace{-0.4cm} 0.96
& 1.71e-3 &\hspace{-0.4cm} 0.96   & 2.27e-3  & \hspace{-0.4cm} 0.94  \\
\hline
$ 2^{-3} $ 
& 1.08e-4
& \hspace{-0.4cm} $1.88$  
& 1.08e-4
&\hspace{-0.4cm} 1.88
& \underline{2.54e-4}
& \hspace{-0.4cm} 1.89 
&3.25e-3   & \hspace{-0.4cm} 0.92
&3.26e-3   & \hspace{-0.4cm} 0.93
& 4.20e-3  & \hspace{-0.4cm} 0.89  \\
\hline
$ 2^{-2} $ & \underline{3.96e-4}   
& \hspace{-0.4cm} 1.88  
& \underline{3.97e-4}  & \hspace{-0.5cm} 1.88
&9.04e-4  & \hspace{-0.4cm} 1.83 
&5.89e-3   & \hspace{-0.4cm} 0.86
& 5.91e-3  &\hspace{-0.4cm}  0.86
& 7.33e-3 & \hspace{-0.4cm} 0.80 \\
 \hline
 \hline
& 7.36e+02   &      & 7.53e+02 &
    & 6.52e+02   &      & 7.48e+02 &
    &7.52e+02 &           & 7.16e+02 &  \\ \hline
 \end{tabular}}
\caption{ \textbf{Case I} in Example 
\ref{ex:numerical_non_dri_diff_critical_case}: weak errors with the test function  $\varphi(x) = \cos x$ of the selected schemes at 
$T=1$.  CPU time (in seconds) is listed at the bottom for schemes with underlined errors.}
\label{table:non-dri-diff-one-dim}
\end{table}
\begin{table}[!ht]
\centering 
\scalebox{1}{
\begin{tabular}
 {c|cc| cc| 
 cc| cc|
 cc| cc}
\hline
\centering{h} 
&\textbf{TS2}  &\hspace{-0.5cm} rate & \textbf{BS2} & \hspace{-0.5cm} rate 
&  \textbf{MS2} & \hspace{-0.5cm} rate &  \textbf{TS1} &\hspace{-0.5cm}  rate 
&  \textbf{BS1} & \hspace{-0.5cm} rate & \textbf{MS1}  & \hspace{-0.5cm} rate 
\\
\hline
$5 \times 10^{-4} $ & 2.66e-8
  &\hspace{-0.5cm} -- & 2.66e-8 &\hspace{-0.5cm}  --
& 2.93e-8 & \hspace{-0.5cm}  -- & 1.45e-5
 & \hspace{-0.5cm} --
& 1.45e-5
   &\hspace{-0.5cm}  --  
   & 1.97e-5
   & \hspace{-0.5cm} --   \\
\hline
$1 \times 10^{-3} $ 
& 5.91e-8
 &\hspace{-0.5cm} 1.15 
& 5.91e-8  & \hspace{-0.5cm} 1.15 
& 6.98e-8
 &\hspace{-0.5cm}   1.25  
 & 2.90e-5
  & \hspace{-0.5cm} 1.00 
& 2.90e-5
  &\hspace{-0.5cm}  1.00 
& 3.94e-5
    & \hspace{-0.5cm} 1.00 
   \\
\hline
$ 2 \times 10^{-3} $ & 1.32e-7
   &\hspace{-0.5cm} 1.16 
  &1.32e-7
 & \hspace{-0.5cm} 1.16 
& 1.74e-7
 & \hspace{-0.5cm} 1.32 
 & 5.79e-5
 & \hspace{-0.5cm} 1.00
&5.79e-5
  & \hspace{-0.5cm} 1.00 & 7.87e-5
  & \hspace{-0.5cm} 1.00   \\
\hline
$ 4 \times 10^{-3} $ 
& 3.13e-7
  & \hspace{-0.5cm} 1.25 
&3.13e-7
  & \hspace{-0.5cm} 1.25 
& 4.83e-7
 &\hspace{-0.5cm}  1.47  & 1.16e-4
  & \hspace{-0.5cm} 1.00
& 1.16e-4
&\hspace{-0.5cm}  1.00 
& 1.57e-4
  & \hspace{-0.5cm} 1.00   \\
\hline
 \end{tabular}
 }
\caption{ \textbf{Case I} in Example 
\ref{ex:numerical_non_dri_diff_critical_case}: weak errors with $\varphi(x) = \cos x$ for smaller step sizes of the selected schemes at 
$T=1$.}
\label{table:non-dri-diff-one-dim-small-step-sizes}
\end{table}

For \textbf{Case II}, where $\sigma=0.5$, we do not expect the numerical schemes (\textbf{TS2}, \textbf{MS2} and \textbf{BS2}) to have weak convergence order of 1 or 2 (see discussions in Example
\ref{exm:nonlin-diff-equ}).  However, we observe a first-order weak convergence in 
Figure \ref{fig:nonlinear_dri_diff_cosx_x_2}, where we present the weak errors of three different numerical schemes (\textbf{TS2}, \textbf{MS2} and \textbf{BS2}) with the test functions $\varphi(x)= x^2 \hbox{,} \cos x$.
In this case, the empirical observation does not match the theoretical prediction.
We conjecture that the mismatch may disappear when much smaller step sizes are used. 
However, we cannot perform such 
experiments due to limited computational resources. 
\begin{figure}[!ht]
\includegraphics[width=0.42\linewidth, height=0.22\textheight]{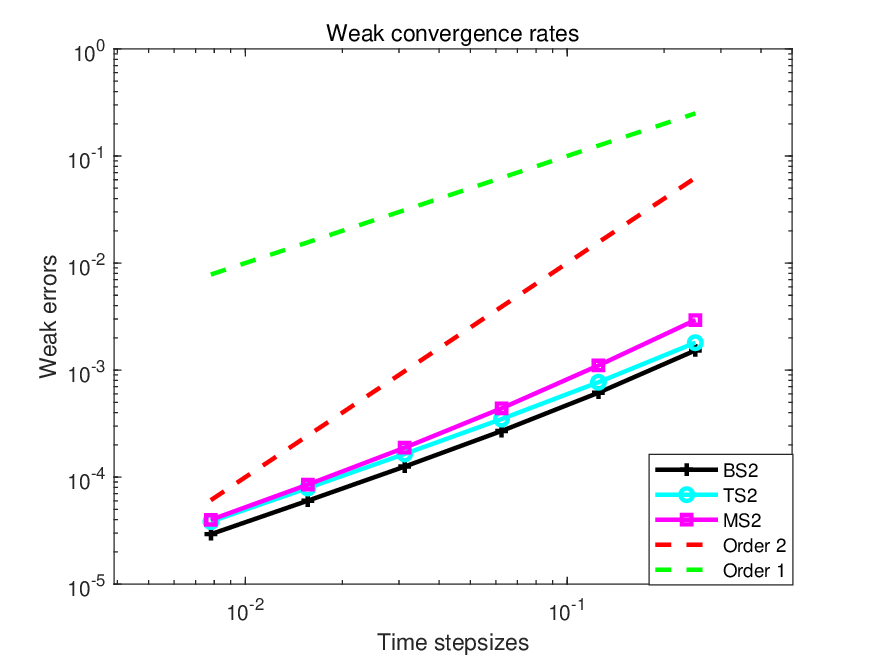} \includegraphics[width=0.42\linewidth, height=0.22\textheight]{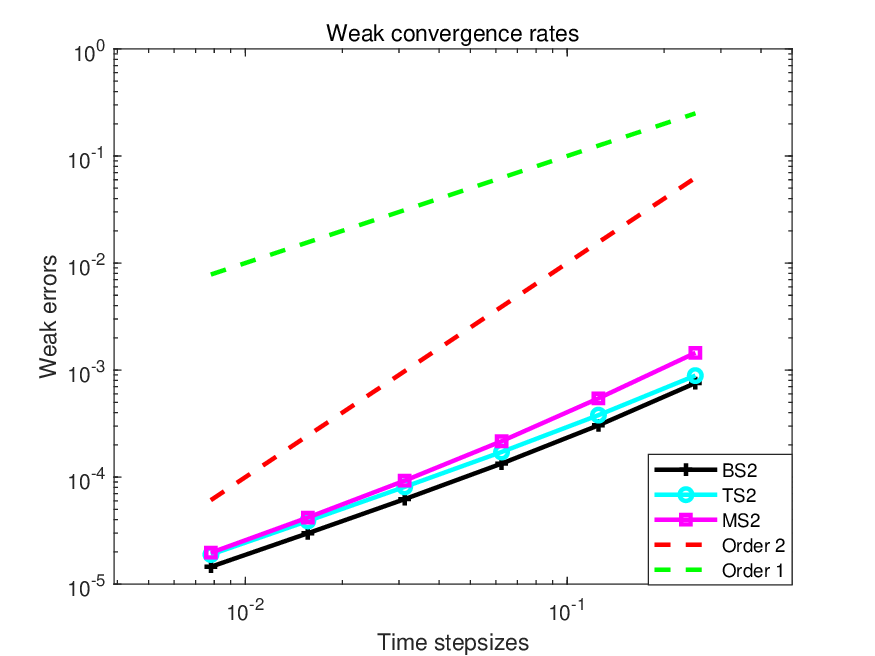}
	\caption
	    {\textbf{Case II} in  Example \ref{ex:numerical_non_dri_diff_critical_case}: weak convergence orders with $ \varphi(x) = x^2 $ (Left) 
	    	and $ \varphi(x) =\cos(x)$ (Right).
	     }
\label{fig:nonlinear_dri_diff_cosx_x_2}	
\end{figure}
\begin{exm}\label{exm:sde-twod}
Consider the stochastic
FitzHugh-Nagumo (FHN) model \cite{buckwar2022splitting} in the form of
\begin{equation}
\label{eq:stochastic_FHN_model}
	\begin{split}
		&  \left(\begin{array}{c}
			\dd X_{1}(t) \\ \dd X_{2}(t) \end{array}\right)
		  = \left(\! \begin{array}{c}
		 	X_{1}(t)-X_{1}^3(t) - X_{2}(t)
		 	\\  X_{1}(t) - X_{2}(t) + 1 
	 	\end{array} \! \right) \dd t
 	+  \left(\! \begin{array}{cc}
 	      X_1(t) + 1 & 
 		0 \\ 0 & 
 		 X_2(t) + 1
 		\end{array} \! \right)
 	   \dd  W(t),
	\end{split}
\end{equation}
for $ t \in (0,T]$ and $ X(0)  = (0.8,0.8)^\top $, with solution $ X(t) := \big( X_1(t) , X_2(t) \big)^\top $
for $ t \in [0,T] $,
where $ W(t) : = \big( W_1(t), W_2(t) \big)^\top $
is a two-dimensional Brownian motion. 
\end{exm}
\begin{figure}[!ht] 
\centering\includegraphics[width=0.42\linewidth, height=0.22\textheight]{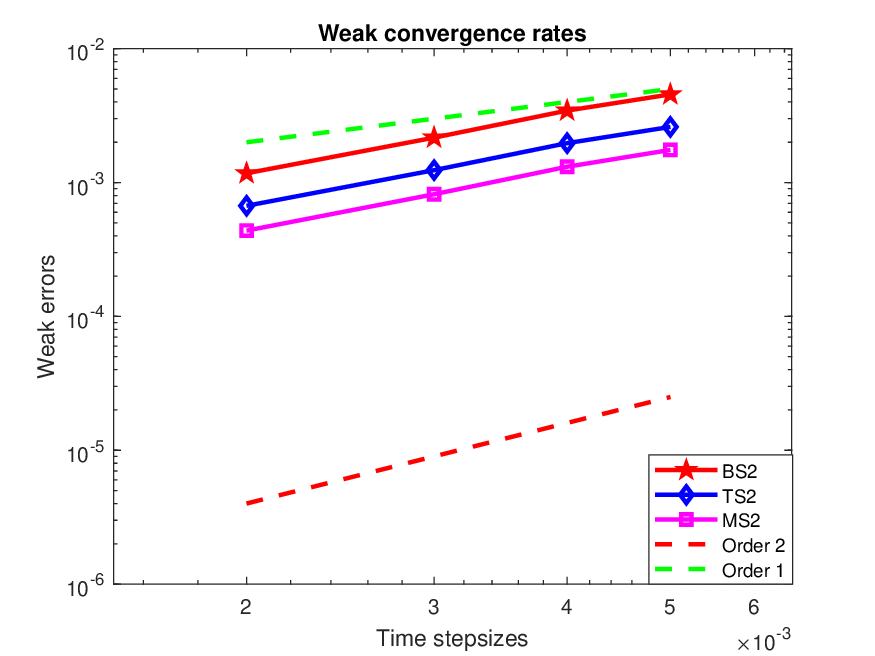}
\includegraphics[width=0.42\linewidth, height=0.22\textheight]{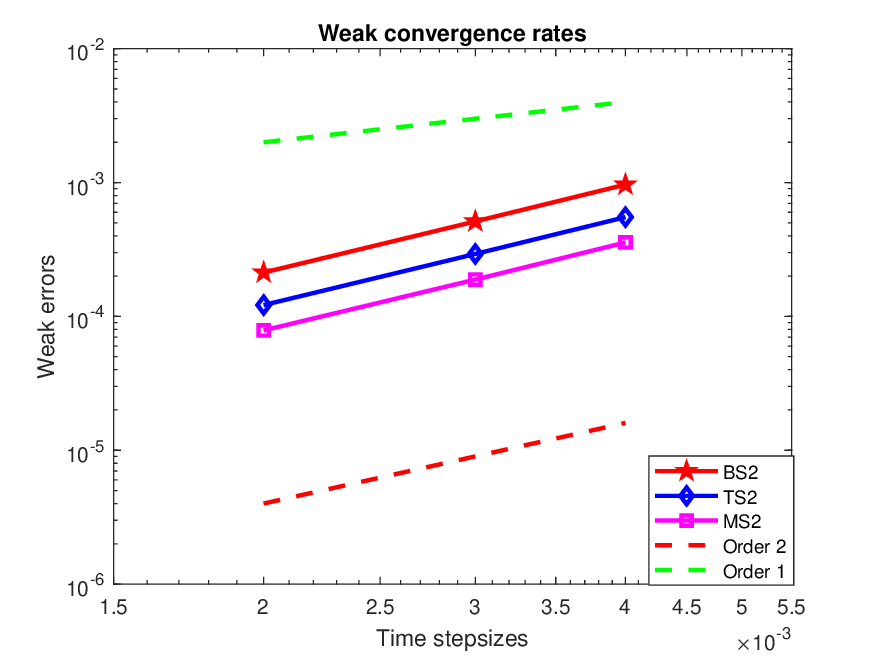}
	\caption
	    {Example \ref{exm:sde-twod}: weak convergence orders of the scheme
    \eqref{eq:second-order-new-scheme} with  $ \varphi(x) = x  $ (Left) 
	    	and $ \varphi(x) = \cos(x) $ (Right).
	     }
\label{fig:two_dimension_x_exp(x)}	
\end{figure}
We fix $ T=2 $ and take $h_{ref}=10^{-3}$
and $ M = 2 \times 10^6 $ 
to generate a reference solution. 

In this example,  Assumption \ref{ass:coefficient_function_assumption} is satisfied with   $ r= 1$ and $\rho = 1$ and $p_0<+\infty$. Also, $p_0'=p_0<\infty$ in the assumption \ref{assu:a3'-mono-condi},
according to the discussions in Section \ref{subsec:ex-sec-order-sche},
all the schemes of \textbf{TS2}, \textbf{MS2} and \textbf{BS2} have weak convergence of order two. 
In Figure \ref{fig:two_dimension_x_exp(x)}, we present the weak errors of these three numerical schemes with the test functions ($\varphi(x)= x \hbox{,} \cos x $) and step sizes $h=  0.002,0.003,0.004$. We observe that the weak convergence orders of these schemes are close to $2$ in agreement with the theoretical prediction.
\section{Weak convergence of the   scheme \eqref{eq:second-order-new-scheme}}
\label{sec:weak-con-order-proof}

In this section, we  prove the second-order weak convergence for the proposed scheme 
\eqref{eq:second-order-new-scheme} under the assumptions in Sections \ref{sec:assumption-preliminary} and \ref{sec:second-order-scheme-prensented}.

\subsection{Moment bounds}
\label{subsec:moment-bound-second-order}
We first prove the moment bounds of the scheme \eqref{eq:second-order-new-scheme}.

\begin{proof}[Proof of Lemma 
\ref{lem:new-scheme-moment-bound}]
To prove the boundedness of moments, we adopt a proof strategy similar to that of Lemma \ref{lem:Euler-scheme-moment-bound}. However, we provide only the necessary details to streamline our presentation. The key to proving the boundedness of moments is to estimate the growth of the solution under some events
\begin{equation*}
\Omega_{\mathcal{R}, n}
  := 
  \left\{
  \omega \in \Omega:
 \sup_{0 \leq i \leq n}
  \left|Y_{i}(\omega)\right| \leq \mathcal{R} \right\}, 
  \quad  n=0,1, \ldots, N, \, \quad
   N \in \mathbb{N}.
\end{equation*}
For integer $ \bar{p} \geq 1 $, We have 
%
\begin{eqnarray}
 \label{eq:decom-new-scheme-sub}
	 \E \big[ \mathds{1}_{\Omega_{ \mathcal{R}, n+1}} |Y_{n+1}|^{\bar{p}}  \big] 
  & \leq &
   \E \big[ \mathds{1}_{\Omega_{ \mathcal{R}, n}} |Y_{n}|^{\bar{p}}  \big]
  +
   \E \big[ \mathds{1}_{\Omega_{ \mathcal{R}, n}} |Y_{n}|^{\bar{p}-2} 
    \big(\bar{p} \langle Y_n, Y_{n+1}-Y_n \rangle
      + \frac{\bar{p}(\bar{p}-1)}{2}| Y_{n+1}-Y_n |^2
    \big) \big]  \nonumber \\
 && + C \sum_{l=3}^{\bar{p}}
   \E \big[ \mathds{1}_{\Omega_{ \mathcal{R}, n}} 
   |Y_{n}|^{\bar{p}-l} 
   |Y_{n+1} - Y_n |^{l}  \big] \nonumber \\
 & := & 
 \E \big[ \mathds{1}_{\Omega_{ \mathcal{R}, n}} |Y_{n}|^{\bar{p}}  \big] 
 + I_1 + I_2.
\end{eqnarray}
Compared to the proof of bounded moments for the scheme \eqref{eq:Euler-scheme}, it is essential to provide a proper upper bound for $ I_1 $. Similar to the proof of the upper bound for $ I_1 $ in Lemma \ref{lem:Euler-scheme-moment-bound}, we have
\begin{eqnarray}
 \label{eq:decom-I1-new-scheme}
    I_1 
   & \leq &
  \bar{p}  \E \big[ \mathds{1}_{\Omega_{ \mathcal{R}, n}} |Y_{n}|^{\bar{p}-2} 
      \langle Y_n, \mathcal{T}_1(f(Y_n),h) h \rangle
      \big] 
  +  \bar{p} \E \big[ \mathds{1}_{\Omega_{ \mathcal{R}, n}} 
   |Y_{n}|^{\bar{p}-2} 
   \langle Y_n, \int_{t_n}^{t_{n+1}} 
   \mathcal{T}_6 \big( \mathcal{L} f(Y_n), h \big) \frac{h}{2} \, \dd s \rangle \big]   \nonumber \\
  && +
  \frac{\bar{p} (\bar{p}-1)}{2} \E \big[ \mathds{1}_{\Omega_{ \mathcal{R}, n}} 
   |Y_{n}|^{\bar{p}-2} 
   |\mathcal{T}_1(f(Y_n),h) h|^{2}  \big]
  + \frac{\bar{p} (\bar{p}-1)}{2} 
  \E \big[ \mathds{1}_{\Omega_{ \mathcal{R}, n}} 
   |Y_{n}|^{\bar{p}-2} 
   \big|\sum_{r=1}^{m} \mg{Y_n} \Delta W_r(n)
   \big|^{2}  \big]  \nonumber \\
 && + 
   \frac{\bar{p} (\bar{p}-1)}{2} 
   \E \Big[ \mathds{1}_{\Omega_{ \mathcal{R}, n}} 
   |Y_{n}|^{\bar{p}-2} \big|
    \sum_{r=1}^{m} \sum_{r_1=1}^{m} \int_{t_n}^{t_{n+1}} \int_{t_n}^{s} 
   \mathcal{T}_3 \big(\Lambda_{r_1} g^r( Y_n), h \big) \, \dd W_{r_1}(s_1) \dd W_r(s) \big|^2 \Big] \nonumber \\
 && +
   \bar{p} (\bar{p}-1)
   \E \Big[ \mathds{1}_{\Omega_{ \mathcal{R}, n}} 
   |Y_{n}|^{\bar{p}-2} \big|
   \sum_{r=1}^{m} \int_{t_n}^{t_{n+1}}
   \mathcal{T}_4 \big( \mathcal{L} g^r(Y_n), h \big) h \, \dd W_r(s) 
   \big|^2 \Big] \nonumber \\
  &&  + 
   \bar{p} (\bar{p}-1)
   \E \Big[ \mathds{1}_{\Omega_{ \mathcal{R}, n}} 
   |Y_{n}|^{\bar{p}-2} \big|
   \sum_{r=1}^{m} 
   \int_{t_n}^{t_{n+1}} \int_{t_n}^{s} 
\mathcal{T}_5 \big( ( \Lambda_{r} f ( Y_n) - \mathcal{L} g^r(Y_n) ), h \big)  \, \dd W_r(s_1) \dd s \big|^2 \Big] 
   \nonumber \\
 && + 
   \frac{\bar{p} (\bar{p}-1)}{2} 
   \E \Big[ \mathds{1}_{\Omega_{ \mathcal{R}, n}} 
   |Y_{n}|^{\bar{p}-2} \big|
 \int_{t_n}^{t_{n+1}} 
  \mathcal{T}_6 \big( \mathcal{L} f (Y_n), h \big) \frac{h}{2} \, \dd s \big|^2 \Big]
  \nonumber \\
  && + 
    \bar{p} (\bar{p}-1)
    \E \Big[ \mathds{1}_{\Omega_{ \mathcal{R}, n}} 
   |Y_{n}|^{\bar{p}-2} \Big \langle \sum_{r=1}^{m} \mg{Y_n} \Delta W_r(n), \sum_{r=1}^{m} \int_{t_n}^{t_{n+1}}
   \mathcal{T}_4 \big( \mathcal{L} g^r(Y_n), h \big) h \, \dd W_r(s)
  \nonumber \\
  &&   \quad + 
   \sum_{r=1}^{m} 
   \int_{t_n}^{t_{n+1}} \int_{t_n}^{s} 
  \mathcal{T}_5 \big( ( \Lambda_{r} f ( Y_n) - \mathcal{L} g^r(Y_n) ), h \big)  \, \dd W_r(s_1) \dd s
   \Big \rangle \Big].
\end{eqnarray}
By Schwarz's inequality, \eqref{eq:the-result-I1}, \ref{ass:tame-h3}, \ref{ass:a1} and \ref{ass:a2}, we have
\begin{eqnarray}
    I_1    
   & \leq & 
  C h 
 + Ch
  \E \big[ \mathds{1}_{\Omega_{ \mathcal{R}(h), n}} |Y_{n}|^{\bar{p}} \big] 
 + Ch^2 \E \big[ \mathds{1}_{\Omega_{ \mathcal{R}(h), n}} |Y_{n}|^{4r+\bar{p}} \big]  
   + 
  Ch^{\tau+1} \E \big[ \mathds{1}_{\Omega_{ \mathcal{R}(h), n}} |Y_{n}|^{(2r+1)l_1+\bar{p}-1} \big] \nonumber \\
  && + 
  Ch^{3} \E \big[ \mathds{1}_{\Omega_{ \mathcal{R}(h), n}} |Y_{n}|^{4r+2\rho+\bar{p}-2} \big] 
 + Ch^{4} \E \big[ \mathds{1}_{\Omega_{ \mathcal{R}(h), n}} |Y_{n}|^{8r+\bar{p}} \big].
\end{eqnarray}
The scheme for estimating $I_2$ in \eqref{eq:decom-new-scheme-sub} is analogous to the approach used to estimate $I_2$ in \eqref{eq:esti-I2}. By leveraging Assumption \ref{ass:tame-h3} and an elementary inequality, we can show that
\begin{eqnarray}
  \label{eq:esti-I2-new-scheme}
   I_2 
   & \leq &  
   C \sum_{l=3}^{\bar{p}}
   \E \big[ \mathds{1}_{\Omega_{ \mathcal{R}, n}} 
   |Y_{n}|^{\bar{p}-l} 
   \big( h^l |\mf{Y_n}|^l + h^{\frac{l}{2}}
    | \sum_{r=1}^{m} \mg{Y_n} |^l
  + h^l |\Lambda_{r_1} g^r( Y_n)|^l + h^{\frac{3l}{2}}
    |\mathcal{L} g^r(Y_n)|^l \nonumber \\
  &&  \quad  +
  h^{2l} |\Lambda_{r} f ( Y_n) - \mathcal{L} g^r(Y_n)|^l + h^{\frac{3l}{2}}
    |\mathcal{L} f(Y_n)|^l 
  \big) \big]    \nonumber \\
& \leq & 
  Ch
   + 
   C \sum_{l=3}^{\bar{p}} h^l
   \E \big[ \mathds{1}_{\Omega_{ \mathcal{R}, n}} |Y_{n}|^{2rl+\bar{p}} \big]  
   + 
   C \sum_{l=3}^{\bar{p}} h^{\frac{l}{2}}
   \E \big[ \mathds{1}_{\Omega_{ \mathcal{R}, n}} |Y_{n}|^{rl+\bar{p}} \big]   \nonumber \\
   & & + 
   C \sum_{l=3}^{\bar{p}} h^{\frac{3l}{2}}
   \E \big[ \mathds{1}_{\Omega_{ \mathcal{R}, n}} |Y_{n}|^{3rl+\bar{p}} \big]
  + C \sum_{l=3}^{\bar{p}} h^{2l}
   \E \big[ \mathds{1}_{\Omega_{ \mathcal{R}, n}} |Y_{n}|^{4rl+\bar{p}} \big].
\end{eqnarray}
Similar to the proof of Lemma \ref{lem:Euler-scheme-moment-bound}, we set $\mathcal{R} = \mathcal{R}(h)=h^{-1/\mathds{G}}$ with 
$ \mathds{G}_1 = 6r \vee \frac{(2r+1)l_1-1}{\tau} $ to show that
\begin{equation}
  \label{eq:new-scheme-result-moment-bound-sub}
     \E \big[ \mathds{1}_{\Omega_{ \mathcal{R}, n}} |Y_n|^{\bar{p}}  \big] 
   \leq 
     C 
  \big( 1 + | Y_{0} | ^{\bar{p}} 
  \big). 
\end{equation}
It remains to estimate $ \E \big[ \mathds{1}_{\Omega_{ \mathcal{R}, n}^{c}} |Y_{n}|^{p}\big] $. It follows from \eqref{eq:second-order-new-scheme}, \eqref{eq:decom_inequality_Yn} and Assumption \ref{ass:tame-h3} that
\begin{eqnarray}
\label{eq:decom_ineq_Yn_new_scheme}
    | Y_{n+1} | 
 &  \leq &
| Y_n | + Ch^{1-\gamma_{1}} + \sum_{r=1}^{m} 
  C h^{-\gamma_2}
\big|W_r(t_{n+1})-W_r(t_n)\big|
 + \sum_{r=1}^{m} \sum_{r_1=1}^{m} 
 Ch^{-\gamma_1} \Big| \int_{t_n}^{t_{n+1}} \int_{t_n}^{s} \dd W_{r_1} (s_1) \dd W_r(s) \Big|
  \nonumber \\
 && +
  \sum_{r=1}^{m} 
  C h^{1-\gamma_3}
\big|W_r(t_{n+1})-W_r(t_n)\big| 
  + \sum_{r=1}^{m} 
  C h^{-\gamma_3}
 \Big| \int_{t_n}^{t_{n+1}} \int_{t_n}^{s} 
\dd W_{r} (s_1) \dd s \Big| +Ch^{2-\gamma_4}
  \nonumber \\
 & \leq &
   | Y_0 | 
 + (n+1)Ch^{1-\gamma_{1} }
 + \sum_{k=0}^{n}  \sum_{r=1}^{m} 
 C h^{-\gamma_2}
 \big|W_r(t_{k+1})-W_r(t_k)\big|
\nonumber \\
 && +  \sum_{k=0}^{n} \sum_{r=1}^{m} 
  C h^{-\gamma_3}
\Big| \int_{t_k}^{t_{k+1}} \int_{t_k}^{s} 
\dd W_{r} (s_1) \dd s \Big|
  + \sum_{k=0}^{n} 
 \sum_{r=1}^{m} \sum_{r_1=1}^{m} 
 Ch^{-\gamma_1} \Big| \int_{t_k}^{t_{k+1}} \int_{t_k}^{s} \dd W_{r_1} (s_1) \dd W_r(s) \Big|
  \nonumber \\
 &&
 + \sum_{k=0}^{n} 
 \sum_{r=1}^{m}
 Ch^{1-\gamma_3}
 \big|W_r(t_{k+1})-W_r(t_k)\big| + (n+1)Ch^{2-\gamma_{4} } .
\end{eqnarray}
Similar to arguments in the proof of \eqref{eq:resu_decom_Yn_subevent}-\eqref{eq:EM-result-subevent-mom-bound}, using
the H\"older inequality with 
$ \frac{1}{p'} + \frac{1}{q'} =1 $ 
for 
$ q' = \frac{\bar{p}}{(p\gamma_1+1)\mathds{G}_1} \vee \frac{\bar{p}}{(1+(\frac12 + \gamma_2)p)\mathds{G}_1}  
\vee  
\frac{\bar{p}}{(1-\frac12 p+ p\gamma_3) \mathds{G}_1}
\vee  
\frac{\bar{p}}{(1- p+ p\gamma_4) \mathds{G}_1}> 1 $
due to 
$ p \leq \frac{ \bar{p}-\mathds{G}_1}{1+\gamma_1 \mathds{G}_1} \wedge \frac{ \bar{p}-\mathds{G}_1}{1+(\frac12+\gamma_2)\mathds{G}_1} \wedge \frac{ \bar{p}- \mathds{G}_1}{1+(\gamma_3-\frac12)\mathds{G}_1} \wedge \frac{ \bar{p}- \mathds{G}_1}{1+(\gamma_4 -1 )\mathds{G}_1}$ and the Chebyshev inequality gives
\begin{equation*}
   \E 
      \big[ \mathds{1}_{\Omega_{ \mathcal{R}(h), n}^{c}}
   |Y_{n}|^{p}\big]  
  \leq 
  \big( \E \big[ |Y_{n}|^{p p'} \big]
   \big)^{\frac{1}{p'}}
   \sum_{i=0}^{n}
   \frac{\big(  \E \big[
   \mathds{1}_{\Omega_{ \mathcal{R}(h), i-1}}
    |Y_i|^{\bar{p}}
   \big] \big)^{\frac{1}{ q'}}}
   {\mathcal{R}(h)^{\bar{p}/q'}}.
\end{equation*}
Using the H\"older inequality, \eqref{eq:decom_ineq_Yn_new_scheme} and the elementary inequality implies
\begin{eqnarray} 
\label{eq:Yn-estimate-new-scheme}
    \big( \E \big[ |Y_{n}|^{p p'} \big]
   \big)^{\frac{1}{p'}} 
  & \leq & 
  C \big( |Y_0|^{\bar{p}} 
  + n^{\bar{p}}  h^{\bar{p}(1-\gamma_1)}
  + n^{\bar{p}}  h^{\bar{p}(\frac{1}{2}-\gamma_2)}
  + n^{\bar{p}}  h^{\bar{p}(\frac{3}{2}-\gamma_3)}
  + n^{\bar{p}}  h^{\bar{p}(2-\gamma_4)}
  \big)^{\frac{p}{\bar{p}}}
  \nonumber \\
%
 & \leq & 
    C h^{-p \gamma_1}
   + C h^{-\frac{p}{2} -p \gamma_2}
   + C h^{\frac{p}{2}-p \gamma_3}
   + C h^{p-p \gamma_4}
 + C \big( 1 + |X_0|^{\bar{p}} \big)
   ^{\frac{p}{\bar{p}}}.
\end{eqnarray}
Notice that arguments similar to those in the proof of Lemma \ref{lem:Euler-scheme-moment-bound} yield that
\begin{eqnarray}
    \E  \big[ 
   \left|Y_{n}\right|^{p}
   \big] 
 & = &
 \E 
      \big[ \mathds{1}_{\Omega_{ \mathcal{R}(h), n}}
   |Y_{n}|^{p}\big]  
  +
     \E 
      \big[ \mathds{1}_{\Omega_{ \mathcal{R}(h), n}^{c}}
   |Y_{n}|^{p}\big]    \nonumber \\
  & \leq &
  \big(
  \E \big[ \mathds{1}_{\Omega_{ \mathcal{R}(h), 
  n}} |Y_{n}|^{\bar{p}}  \big] 
   \big)^{\frac{p}{\bar{p}}}
 + \E 
      \big[ \mathds{1}_{\Omega_{ \mathcal{R}(h), n}^{c}}
   |Y_{n}|^{p}\big]   \nonumber \\
  & \leq &
C \big( 1+|X_0|^{p} \big)^{\beta}
\end{eqnarray}
for all integer $ p \in \big[ 1, 
 \frac{ \bar{p}-\mathds{G}_1}{1+\gamma_1 \mathds{G}_1} \wedge \frac{ \bar{p}-\mathds{G}_1}{1+(\frac12+\gamma_2)\mathds{G}_1} \wedge \frac{ \bar{p}-\mathds{G}_1}{1+(\gamma_3-\frac12)\mathds{G}_1} \wedge \frac{ \bar{p}-\mathds{G}_1}{1+(\gamma_4 -1 )\mathds{G}_1}] $. Here $\beta =1+ \frac{\bar{p}}{pq'} = 1+ \frac{(p\gamma_1+1)\mathds{G}_1}{p} \wedge \frac{(1+\frac12 p+ p\gamma_2)\mathds{G}_1}{p} \wedge \frac{(1-\frac12 p+ p\gamma_3) \mathds{G}_1}{p} \wedge \frac{(1- p+ p\gamma_4) \mathds{G}_1}{p} $.
 
Then, by Jensen's inequality, \eqref{eq:second-order-new-scheme-moment-bound-result} holds for non-integer $p$ as well.

Except for the second term $I_1$ on the right-hand side of \eqref{eq:decom-new-scheme-sub}, the proof of bounded moments for the scheme \eqref{eq:second-order-new-scheme} under Assumption \ref{ass:tame-h2p} is identical to that under Assumption \ref{ass:tame-h2}. Applying the elementary inequality and Assumption \ref{ass:tame-h2p}, we get
\begin{eqnarray*}
    I_1    
   & \leq & 
  C h 
 + Ch
  \E \big[ \mathds{1}_{\Omega_{ \mathcal{R}(h), n}} |Y_{n}|^{\bar{p}} \big] 
 + Ch^2 \E \big[ \mathds{1}_{\Omega_{ \mathcal{R}(h), n}} |Y_{n}|^{4r+\bar{p}} \big]  
    \nonumber \\
  && + 
  Ch^{3} \E \big[ \mathds{1}_{\Omega_{ \mathcal{R}(h), n}} |Y_{n}|^{4r+2\rho+\bar{p}-2} \big] 
 + Ch^{4} \E \big[ \mathds{1}_{\Omega_{ \mathcal{R}(h), n}} |Y_{n}|^{8r+\bar{p}} \big].
\end{eqnarray*}
Similar to the proof above, 
we choose $\mathcal{R} = h^{-1/\mathds{G}_1}$ and obtain a similar conclusion.
\end{proof}
\subsection{Weak convergence of the scheme \eqref{eq:second-order-new-scheme}}
To verify the one-step approximation errors \eqref{eq:error_of_general_one_step_approximation}-\eqref{eq:estimate_general_one_step_approximate_solution} in Theorem \ref{thm:fundamental_weak_convergence}, we  examine the one-step approximation of the numerical scheme \eqref{eq:second-order-new-scheme}, which is expressed as follows:
\begin{eqnarray}
\label{eq:one-step-second-order-new-scheme}
Y(t,x;t+h)
  & = & 
  x + \mf{ x } h
    + \sum_{r=1}^{m}
     \int_{t}^{t+h}
     \mg{ x } 
    \, \dd W_r(s) \nonumber \\
 &&  +
    \sum_{r=1}^{m} \sum_{r_1=1}^{m} \int_{t}^{t+h} \int_{t}^{s} 
   \mathcal{T}_3 \big(\Lambda_{r_1} g^r(x), h \big) \, \dd W_{r_1}(s_1) \dd W_r(s)
  +
   \sum_{r=1}^{m} \int_{t}^{t+h}
   \mathcal{T}_4 \big( \mathcal{L} g^r(x), h \big) h \, \dd W_r(s)   \nonumber \\
  &&  + 
   \sum_{r=1}^{m} 
   \int_{t}^{t+h} \int_{t}^{s} 
\mathcal{T}_5 \big( ( \Lambda_{r} f (x) - \mathcal{L} g^r(x) ), h \big)  \, \dd W_r(s_1) \dd s
  + \int_{t}^{t+h} 
  \mathcal{T}_6 \big( \mathcal{L} f (x), h \big) 
  \frac{h}{2} \, \dd s.
\end{eqnarray}
Based on Assumption \ref{ass:diff-T1-T6-new-scheme-MT}, we can derive accurate estimates of the strong errors of the one-step approximations, which are essential for obtaining the desired estimates of the weak errors.
\begin{lem}
 \label{lem:norm-esti-one-step-new-MT}
Let Assumptions \ref{ass:a1}, \ref{ass:a2}
and Assumption \ref{ass:diff-T1-T6-new-scheme-MT} hold, then  
for any $ \mathfrak{p} \geq 1 $ it holds that
\begin{align}
  \label{eq:norm-esti-one-step-EM-balanceed-EM}
    \| \delta_{Y_{MT},x} -  \delta_{Y,x} 
    \|_{L^{2 \mathfrak{p}}(\Omega;\mathbb{R}^d)}
     & \leq
     C (1+|x|^{r_1})h^{q_0+\frac{1}{2}} ,
\\
  \label{eq:norm-esti-one-step-banlanced-EM}
      \|
           \delta_{Y,x}
       \|_{L^{2 \mathfrak{p}}(\Omega;\mathbb{R}^d)}
    & \leq C  
(1+|x|^{r_1})
 h^{\frac{1}{2}},
\\
  \label{eq:norm-esti-one-step-EM}
      \|
           \delta_{Y_{MT},x}
       \|_{L^{2 \mathfrak{p}}(\Omega;\mathbb{R}^d)}
    & \leq C  
 (1+|x|^{4r+1})h^{\frac{1}{2}},
\end{align}
where $r_1 = (4r+1)\eta_{q_0}$, $r$ is from \eqref{eq:f-growth} and  $\eta_{q_0}$ is from Assumption  \ref{ass:diff-T1-T6-new-scheme-MT}.
\end{lem}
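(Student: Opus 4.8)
The plan is to prove the three bounds by decomposing each one-step increment into the six It\^o--Taylor terms of \eqref{eq:one-step-Milstein–Talay} and \eqref{eq:one-step-second-order-new-scheme}, and estimating each resulting stochastic or deterministic integral in $L^{2\mathfrak p}$ by combining Burkholder--Davis--Gundy and H\"older inequalities with the polynomial growth of the coefficients. I would first establish \eqref{eq:norm-esti-one-step-EM}, then the difference bound \eqref{eq:norm-esti-one-step-EM-balanceed-EM}, and finally deduce \eqref{eq:norm-esti-one-step-banlanced-EM} from these two by the triangle inequality. The preliminary step is to record the growth orders of the coefficients that appear: using \ref{ass:a1}, \ref{ass:a2}, \eqref{eq:f-growth}, \eqref{eq:g-polynomial-growth-con} and the chain rule, one checks that $|f(x)|\le C(1+|x|^{2r+1})$, $|g^r(x)|\le C(1+|x|^{r+1})$, $|\Lambda_{r_1}g^r(x)|\le C(1+|x|^{2r+1})$, $|\mathcal{L}g^r(x)|\le C(1+|x|^{3r+1})$, $|\Lambda_r f(x)-\mathcal{L}g^r(x)|\le C(1+|x|^{3r+1})$, and, crucially, $|\mathcal{L}f(x)|\le C(1+|x|^{4r+1})$, the last exponent being the dominant one.

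For \eqref{eq:norm-esti-one-step-EM} I would use the elementary moment identities (valid for deterministic integrands $c$): $\big\|\int_t^{t+h}c\,\dd W_r\big\|_{L^{2\mathfrak p}}\le C|c|h^{1/2}$, $\big\|\int_t^{t+h}\!\int_t^s c\,\dd W_{r_1}\dd W_r\big\|_{L^{2\mathfrak p}}\le C|c|h$, $\big\|\int_t^{t+h}\!\int_t^s c\,\dd W_r\,\dd s\big\|_{L^{2\mathfrak p}}\le C|c|h^{3/2}$, $\big\|\int_t^{t+h}c\,h\,\dd W_r\big\|_{L^{2\mathfrak p}}\le C|c|h^{3/2}$ and $\big\|\int_t^{t+h}c\tfrac{h}{2}\dd s\big\|_{L^{2\mathfrak p}}\le C|c|h^2$. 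Applying these with $c$ equal to the respective coefficients, the slowest-decaying contribution is the diffusion term, of order $h^{1/2}$, while the largest growth exponent $4r+1$ comes from the $\mathcal{L}f$ term; taking the maximal exponent and using $h\in(0,1)$ to absorb the higher $h$-powers yields \eqref{eq:norm-esti-one-step-EM}. For the difference \eqref{eq:norm-esti-one-step-EM-balanceed-EM}, each Milstein--Talay coefficient $z$ is replaced by $z-\mathcal{T}_i(z,h)$, so I would apply the same moment identities together with Assumption \ref{ass:diff-T1-T6-new-scheme-MT}, which supplies the extra factor $h^{q_0}$ for $i=1,2,3$ and $h^{q_0-1}$ for $j=4,5,6$ and turns each growth exponent into its $\eta_{q_0}$-th power. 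Term by term this gives powers $h^{q_0+1}$ (from $\mathcal{T}_1,\mathcal{T}_3,\mathcal{T}_6$) and $h^{q_0+1/2}$ (from $\mathcal{T}_2,\mathcal{T}_4,\mathcal{T}_5$), with the maximal growth exponent $(4r+1)\eta_{q_0}=r_1$ arising from the $\mathcal{T}_6$ contribution; retaining the smallest power $h^{q_0+1/2}$ establishes \eqref{eq:norm-esti-one-step-EM-balanceed-EM}.

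Finally, \eqref{eq:norm-esti-one-step-banlanced-EM} follows from $\|\delta_{Y,x}\|_{L^{2\mathfrak p}}\le \|\delta_{Y_{MT},x}\|_{L^{2\mathfrak p}}+\|\delta_{Y_{MT},x}-\delta_{Y,x}\|_{L^{2\mathfrak p}}$, where the two right-hand terms are controlled by \eqref{eq:norm-esti-one-step-EM} and \eqref{eq:norm-esti-one-step-EM-balanceed-EM}; since $q_0>0$ and (in all applications) $\eta_{q_0}\ge 1$, the factor $h^{q_0+1/2}\le h^{1/2}$ and $4r+1\le r_1$, so both terms are bounded by $C(1+|x|^{r_1})h^{1/2}$. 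Alternatively one may bound $\|\delta_{Y,x}\|$ directly using only the $|z|$-growth part of \ref{ass:tame-h1} and \ref{ass:tame-h3}, which avoids the negative $h$-powers and again produces the order-$h^{1/2}$ diffusion term as the leading contribution. The main obstacle I anticipate is not analytic depth but the careful bookkeeping across the six terms: correctly deriving the composite growth exponents of $\mathcal{L}g^r$, $\Lambda_r f-\mathcal{L}g^r$ and especially $\mathcal{L}f$ from \ref{ass:a1}--\ref{ass:a2}, and simultaneously tracking that the diffusion term dictates the exponent $q_0+\tfrac12$ (resp. $\tfrac12$) of $h$ while $\mathcal{L}f$ dictates the polynomial exponent $r_1=(4r+1)\eta_{q_0}$.
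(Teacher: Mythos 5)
Your proposal is correct and follows essentially the same route as the paper: both decompose the one-step increments into the six It\^o--Taylor terms, apply Minkowski together with standard moment inequalities for the iterated stochastic integrals, track the polynomial growth exponents (with $\mathcal{L}f$ giving the dominant exponent $4r+1$ and the diffusion term dictating the $h$-power), and invoke Assumption \ref{ass:diff-T1-T6-new-scheme-MT} for the difference bound. The only deviation is that you obtain \eqref{eq:norm-esti-one-step-banlanced-EM} by the triangle inequality (which needs $\eta_{q_0}\ge 1$ so that $4r+1\le r_1$, a condition the assumption does not formally guarantee) whereas the paper estimates $\delta_{Y,x}$ directly via the same term-by-term decomposition --- but you note this direct alternative yourself, so nothing is missing.
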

\begin{proof}
Using the Minkowski inequality, the moment inequality \cite[Theorem 7.1]{mao2008stochastic} and  Assumption \ref{ass:diff-T1-T6-new-scheme-MT} gives
\begin{eqnarray}
 \label{eq:norm-esti-new-scheme-MT}
   \| \delta_{Y_{MT},x} -  \delta_{Y,x} 
     \|_{L^{2 \mathfrak{p} }(\Omega;\mathbb{R}^d)}
 & \leq & 
  C h \big\| f(x)-\mf{x}
  \big\|_{L^{2 \mathfrak{p} } (\Omega, \R^d )} 
 +  C h^{\frac{1}{2}} 
 \big\| g^r(x)-\mg{x}
  \big\|_{L^{2 \mathfrak{p} } (\Omega, \R^d )}   \nonumber  \\
 & & + C h 
  \big\| \Lambda_{r_1} g^r(x)-
    \mathcal{T}_3 \big(\Lambda_{r_1} g^r(x), h \big)
  \big\|_{L^{2 \mathfrak{p} } (\Omega, \R^d )}  \nonumber \\
 && + C h^{\frac{3}{2}}
  \big\| \mathcal{L} g^r(x)-\mathcal{T}_4 \big( \mathcal{L} g^r(x), h \big)
  \big\|_{L^{2 \mathfrak{p} } (\Omega, \R^d )} 
  + C h^{2}
  \big\| \mathcal{L} f (x)-\mathcal{T}_6 \big( \mathcal{L} f(x), h \big) \|_{L^{2 \mathfrak{p} } (\Omega, \R^d )} 
  \nonumber \\
 && + C h^{\frac{3}{2}}
  \big\| \big( \Lambda_{r} f (x) - \mathcal{L} g^r(x) \big) -
    \mathcal{T}_5 \big( ( \Lambda_{r} f (x) - \mathcal{L} g^r(x) ), h \big)
  \big\|_{L^{2 \mathfrak{p} } (\Omega, \R^d )} \nonumber \\
 & \leq &
 C (1+|x|^{r_1})h^{q_0 + \frac{1}{2}} ,
\end{eqnarray}
which implies the first assertion.
Similar arguments lead to
\begin{eqnarray}
   \| \delta_{Y,x} 
     \|_{L^{2 \mathfrak{p} }(\Omega;\mathbb{R}^d)}
 & \leq & 
  C h \big\| \mf{x}
  \big\|_{L^{2 \mathfrak{p} } (\Omega, \R^d )} 
 +  C h^{\frac{1}{2}} 
  \big\| \mg{x}
  \big\|_{L^{2 \mathfrak{p} } (\Omega, \R^d )}   \nonumber  \\
 & & + C h 
  \big\|
    \mathcal{T}_3 \big(\Lambda_{r_1} g^r(x), h \big)
  \big\|_{L^{2 \mathfrak{p} } (\Omega, \R^d )}  \nonumber \\
 && + C h^{\frac{3}{2}}
  \big\| \mathcal{T}_4 \big( \mathcal{L} g^r(x), h \big)
  \big\|_{L^{2 \mathfrak{p} } (\Omega, \R^d )} 
  + C h^{2}
  \big\|\mathcal{T}_6 \big( \mathcal{L} f(x), h \big) \|_{L^{2 \mathfrak{p} } (\Omega, \R^d )} 
  \nonumber \\
 && + C h^{\frac{3}{2}}
  \big\|
    \mathcal{T}_5 \big( ( \Lambda_{r} f (x) - \mathcal{L} g^r(x) ), h \big)
  \big\|_{L^{2 \mathfrak{p} } (\Omega, \R^d )} \nonumber \\
 & \leq &
  C (1+|x|^{r_1})
 h^{\frac{1}{2}}.
\end{eqnarray}
The estimate of 
$\| \delta_{Y_{MT},x}
\|_{L^{2 \mathfrak{p} }(\Omega;\mathbb{R}^d)}$ can be done similarly.
\end{proof}
\begin{lem}[One-step weak error estimates]
\label{lem:balanced-one-step-error}
Under the same conditions of Lemma \ref{lem:norm-esti-one-step-new-MT}, 
the one-step approximation 
\eqref{eq:one-step-second-order-new-scheme}
of the numerical scheme \eqref{eq:second-order-new-scheme} satisfies
\begin{eqnarray*}
       \Big| \E \Big [ 
         \prod_{j=1}^s (\delta_{X,x})^{i_j}
       - 
        \prod_{j=1}^s (\delta_{Y,x})^{i_j} 
          \Big] \Big|
        & \leq &
         C \big( 1+|x|^{r_2}+|x|^{(2q+1)r_1}
      \big) h^{3 \wedge (q_0+1)},
         \, s=1, \ldots, 2q+1,
       \nonumber \\
%
  	\Big\| \prod_{j=1}^{2q+2}  
  	      ( \delta_{Y, x} )^{i_j} 
  	\Big\|_{L^2 (\Omega; \R)}
     & \leq &  
    C (1+|x|^{(2q+2) r_1})  h^{q+1}, \nonumber \\
%
   \Big\| \prod_{j=1}^{2q+2}  
     ( \delta_{X, x} )^{i_j} 
   \Big\|_{L^2 (\Omega; \R)}
& \leq & C (1+|x|^{(2q+2)(2r+1)}) h^{q+1},
\end{eqnarray*}
Here $r_1$ comes from Lemma \ref{lem:norm-esti-one-step-new-MT} and $r_2 = (2q+1)(6r+1).$
\end{lem}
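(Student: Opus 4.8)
The plan is to interpose the Milstein--Talay one-step map $Y_{MT}(t,x;t+h)$ of \eqref{eq:one-step-Milstein–Talay} between the exact increment $\delta_{X,x}$ and the modified increment $\delta_{Y,x}$. For the first (weak) estimate I would write
\[
\prod_{j=1}^s (\delta_{X,x})^{i_j} - \prod_{j=1}^s (\delta_{Y,x})^{i_j}
= \Big[\prod_{j=1}^s (\delta_{X,x})^{i_j} - \prod_{j=1}^s (\delta_{Y_{MT},x})^{i_j}\Big]
+ \Big[\prod_{j=1}^s (\delta_{Y_{MT},x})^{i_j} - \prod_{j=1}^s (\delta_{Y,x})^{i_j}\Big].
\]
The first bracket is the local weak error of the Milstein--Talay scheme, which matches the It\^o--Taylor expansion of the exact flow through order $h^2$; under \ref{ass:a1}--\ref{ass:a2} the classical argument gives $\big|\E[\cdots]\big|\le C(1+|x|^{r_2})h^{3}$ with $r_2=(2q+1)(6r+1)$, the exponent being read off from the polynomial growth that the iterated operators $\mathcal{L}$ and $\Lambda_r$, applied to $f$ and $g$, accumulate in the $h^3$ remainder.

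For the second bracket I would apply the telescoping identity $\prod_j a_j-\prod_j b_j=\sum_k(\prod_{j<k}b_j)(a_k-b_k)(\prod_{j>k}a_j)$ with $a_j=(\delta_{Y_{MT},x})^{i_j}$ and $b_j=(\delta_{Y,x})^{i_j}$. For $s\ge 2$, taking expectations and invoking the generalized H\"older inequality with the strong estimates of Lemma \ref{lem:norm-esti-one-step-new-MT} bounds each summand by $C(1+|x|^{(2q+1)r_1})h^{q_0+\frac12}h^{\frac{s-1}{2}}=C(1+|x|^{(2q+1)r_1})h^{q_0+\frac{s}{2}}$, which for $s\ge 2$ is $\le Ch^{q_0+1}\le Ch^{3\wedge(q_0+1)}$. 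The borderline case $s=1$ loses half an order if handled this way, so here I would instead use the explicit form of $\delta_{Y_{MT},x}-\delta_{Y,x}$ from \eqref{eq:one-step-Milstein–Talay} and \eqref{eq:one-step-second-order-new-scheme}: it is a sum of the six terms $h(f-\mathcal{T}_1(f,\cdot))$, $(g^r-\mathcal{T}_2(g^r,\cdot))\Delta W_r$, a double Wiener integral of $(\Lambda_{r_1}g^r-\mathcal{T}_3)$, $h\Delta W_r(\mathcal{L}g^r-\mathcal{T}_4)$, an $\int\!\int dW_r\,ds$ term in $(\Lambda_r f-\mathcal{L}g^r-\mathcal{T}_5)$, and $\tfrac{h^2}{2}(\mathcal{L}f-\mathcal{T}_6)$; every stochastic factor has mean zero, so only the two deterministic terms survive the expectation, and by Assumption \ref{ass:diff-T1-T6-new-scheme-MT} each is $\le C(1+|x|^{r_1})h^{q_0+1}$. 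Summing the two brackets gives the first estimate.

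For the second estimate I would use the generalized H\"older inequality $\big\|\prod_{j=1}^{2q+2}(\delta_{Y,x})^{i_j}\big\|_{L^2}\le\prod_{j=1}^{2q+2}\big\|(\delta_{Y,x})^{i_j}\big\|_{L^{2(2q+2)}}$ and substitute $\|\delta_{Y,x}\|_{L^{2(2q+2)}}\le C(1+|x|^{r_1})h^{1/2}$ from \eqref{eq:norm-esti-one-step-banlanced-EM}, giving $(1+|x|^{r_1})^{2q+2}h^{(2q+2)/2}=C(1+|x|^{(2q+2)r_1})h^{q+1}$. For the third estimate I would write $\delta_{X,x}=\int_t^{t+h}f(X(s))\,ds+\sum_{r}\int_t^{t+h}g^r(X(s))\,dW_r(s)$ and combine the Burkholder--Davis--Gundy and H\"older inequalities with the polynomial growth \eqref{eq:f-growth}, \eqref{eq:g-polynomial-growth-con} and the moment bound \eqref{eq:esti_sol_SDE_wrt_initial} to obtain $\|(\delta_{X,x})^{i_j}\|_{L^{2(2q+2)}}\le C(1+|x|^{2r+1})h^{1/2}$, then apply the generalized H\"older inequality once more.

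The step I expect to be the main obstacle is the first bracket: re-establishing the second-order local weak accuracy of Milstein--Talay when $f$ and $g$ are only polynomially growing (and $C^{2q+2}$) rather than globally Lipschitz, while pinning down the sharp polynomial exponent $r_2=(2q+1)(6r+1)$. This demands a careful It\^o--Taylor expansion of both flows with explicit bookkeeping of the growth produced by repeated differentiation and multiplication of the coefficients in the $h^3$ remainder; by contrast, once the mean-zero terms are identified, the $s=1$ cancellation that upgrades $h^{q_0+1/2}$ to $h^{q_0+1}$ is structurally transparent.
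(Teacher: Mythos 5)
Your proposal is correct and follows essentially the same route as the paper: interpose the Milstein--Talay one-step map, invoke the classical local weak-order result (the paper cites Lemma 1.4 of Chapter 2 in Milstein--Tretyakov) for the first bracket with the exponent $r_2=(2q+1)(6r+1)$, handle $s=1$ by noting that only the two deterministic difference terms $h(f-\mathcal{T}_1(f,h))$ and $\tfrac{h^2}{2}(\mathcal{L}f-\mathcal{T}_6(\mathcal{L}f,h))$ survive the expectation, treat $s\ge 2$ by H\"older together with the strong estimates of Lemma \ref{lem:norm-esti-one-step-new-MT}, and finish the product bounds with the generalized H\"older inequality in $L^{4q+4}$. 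The paper likewise leaves the polynomially-growing-coefficient version of the Milstein--Talay local weak error to the cited reference, so your identification of that step as the main burden is consistent with what the paper actually does.
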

\begin{proof}
By the triangle inequality, we have
\begin{eqnarray}
 &&
      \Big| \E \Big [ 
          \prod_{j=1}^s (\delta_{X,x})^{i_j}
         - 
          \prod_{j=1}^s (\delta_{Y,x})^{i_j} 
       \Big] \Big|   \nonumber \\
  &  \leq  &
       \Big| \E \Big [ 
          \prod_{j=1}^s (\delta_{X,x})^{i_j}
       - 
          \prod_{j=1}^s (\delta_{Y_{MT},x})^{i_j} \Big] \Big|
       + 
       \Big| \E \Big [ 
          \prod_{j=1}^s (\delta_{Y_{MT},x})^{i_j}
       - 
          \prod_{j=1}^s (\delta_{Y,x})^{i_j}
       \Big] \Big|      \nonumber  \\
   &   =:  &
            J_{1} + J_{2},
            \qquad
            s=1, \ldots, 2q+1.
\end{eqnarray}
Building on the proof presented in \cite[Lemma 1.4, Chapter 2]{Milstein2004stochastic}, we establish the inequality $ J_1 \leq C (1+|x|^{r_2}) h^3 $.
Now, our focus shifts to the estimation of $ J_2 $. Specifically, when $ s=1 $, we can estimate the second term $ J_2 $ in the following manner:
\begin{eqnarray}
  \label{eq:error-estimate-s1-J2}
       \big| \E \big [  
           ( \delta_{Y_{MT},x})^{i_1}
        -  (\delta_{Y,x})^{i_1} 
        \big] \big|      
  & \leq &
        \big | \E \big[
        \big(f(x)-\mf{x}
        \big) h
        + \frac{h^2}{2}  
        \big( 
        \mathcal{L} f (x)-\mathcal{T}_6 \big( \mathcal{L} f(x), h \big) \big) \nonumber  \\          
   & & +
   \sum_{r=1}^{m} \sum_{r_1=1}^{m} \int_{t}^{t+h} \int_{t}^{s} 
   \big( \Lambda_{r_1} g^r(x)-
    \mathcal{T}_3 \big(\Lambda_{r_1} g^r(x), h \big) \big) \, \dd W_{r_1}(s_1) \dd W_r(s) 
  \big] \big|  
   \nonumber  \\
 & \leq &
    C (1+|x|^{r_1}) h^{q_0+1}.
\end{eqnarray}
With the H\"older inequality and Lemma \ref{lem:norm-esti-one-step-new-MT} available, one can readily obtain the result for $s=2$, yielding 
\begin{eqnarray}
    \label{eq:error-estimate-s2-J2}
  && \Big| \E \big[ 
         \prod_{j=1}^2 (\delta_{Y_{MT},x})^{i_j}
     - 
          \prod_{j=1}^2 (\delta_{Y,x})^{i_j} 
     \big]  \Big|  \nonumber \\
   &  \leq &  
   \big \| \delta_{Y_{MT},x} -  \delta_{Y,x} 
    \big \|_{L^{2}(\Omega;\mathbb{R}^d)}
   \big \| \delta_{Y_{MT},x} \big \|_{L^{2}(\Omega;\mathbb{R}^d)} 
   + \big \| \delta_{Y,x} \big \|_{L^{2}  
    (\Omega;\mathbb{R}^d)}
    \big \| \delta_{Y_{MT},x} -  \delta_{Y,x} 
    \big \|_{L^{2}(\Omega;\mathbb{R}^d)} 
    \nonumber \\
   & \leq &
 C (1+|x|^{2 r_1})h^{q_0 +1}.
\end{eqnarray} 
Utilizing a technique analogous to the one demonstrated in Equation \eqref{eq:error-estimate-s2-J2}, we obtain the following result
\begin{equation}
    \label{eq:error-estimate-s3-J2}
  \Big| \E \big[ 
         \prod_{j=1}^3 (\delta_{Y_{MT},x})^{i_j}
     - 
          \prod_{j=1}^3 (\delta_{Y,x})^{i_j} 
     \big]  \Big| 
    \leq
C (1+|x|^{3 r_1}) h^{q_0 + \frac{3}{2}}.
\end{equation}
Similar to 
\eqref{eq:error-estimate-s3-J2}, we have
\begin{equation*}
       J_2 
         \leq
        C (1+|x|^{(2q+1) r_1})
           h^{q_0+1},
             \,\,
             s = 1,\ldots, 2q+1.
\end{equation*}
Using the H\"{o}lder inequality and 
\eqref{eq:norm-esti-one-step-banlanced-EM} yields
\begin{equation*}
     	\Big\| \prod_{j=1}^{2q+2}  
     	   ( \delta_{Y, x} )^{i_j} 
     	\Big\|_{L^2 (\Omega; \R)}
      \leq   
       	 \prod_{j=1}^{2q+2}
       	   \big\|   
       	   ( \delta_{Y, x} )^{i_j} 
       	   \big\|_{L^{4q+4} (\Omega; \R)}
      \leq
         C (1+|x|^{(2q+2) r_1}) h^{q+1}.
\end{equation*}
\vskip -20pt 
\end{proof}
By Lemmas \ref{lem:new-scheme-moment-bound} and  \ref{lem:balanced-one-step-error}
and Theorem \ref{thm:fundamental_weak_convergence}, we can readily derive the weak convergence order of the numerical scheme \eqref{eq:second-order-new-scheme} in Theorem \ref{thm:MMT-global-conver-order}.
\section{Conclusion and discussion}
We have discussed the weak convergence orders of modified Euler and Milstein-Talay schemes. When 
solutions and numerical solutions have finite moments, it is crucial to find the highest moments of numerical solutions to determine convergence orders. 
We show how the modification maps in the modified schemes determine the highest order of the moments. We investigate the effects of limited moments on weak convergence orders theoretically and numerically. 
However, the theoretical orders of weak convergence seem to be lower than the empirically observed orders. Specifically, 
the modified Euler schemes seem to be of first order even with limited moments while the modified Milstein-Talay schemes can degenerate in convergence orders - it may be of first order or less. 
Further exploration in this direction is needed to understand the mismatch. 

\bibliographystyle{abbrv}
\bibliography{bibfile}

\begin{thebibliography}{10}

\bibitem{buckwar2022splitting}
E.~Buckwar, A.~Samson, M.~Tamborrino, and I.~Tubikanec.
\newblock A splitting method for {SDE}s with locally {L}ipschitz drift: {I}llustration on the {FitzHugh-Nagumo} model.
\newblock {\em Applied Numerical Mathematics}, 179:191--220, 2022.

\bibitem{Cerrai2001second}
S.~Cerrai.
\newblock {\em Second order {PDE}'s in finite and infinite dimension: A probabilistic approach}, volume 1762 of {\em Lecture Notes in Mathematics}.
\newblock Springer-Verlag, Berlin, 2001.

\bibitem{chak2024regularity}
M.~Chak.
\newblock Regularity preservation in kolmogorov equations for non-lipschitz coefficients under lyapunov conditions.
\newblock {\em Probability Theory and Related Fields}, 190(1):259--319, 2024.

\bibitem{chen2019mean}
Z.~Chen, S.~Gan, and X.~Wang.
\newblock Mean-square approximations of {L}{\'e}vy noise driven {SDE}s with super-linearly growing diffusion and jump coefficients.
\newblock {\em Discrete \& Continuous Dynamical Systems-Series B}, 24(8), 2019.

\bibitem{Hutzenthaler2011Strong}
M.~Hutzenthaler, A.~Jentzen, and P.~E. Kloeden.
\newblock Strong and weak divergence in finite time of {E}uler's method for stochastic differential equations with non-globally {L}ipschitz continuous coefficients.
\newblock {\em Proceedings of The Royal Society of London. Series A. Mathematical, Physical and Engineering Sciences}, 467(2130):1563--1576, 2011.

\bibitem{khasminskii2011stochastic}
R.~Khasminskii.
\newblock {\em Stochastic stability of differential equations}, volume~66.
\newblock Springer Science \& Business Media, 2011.

\bibitem{Liu2013strong}
W.~Liu and X.~Mao.
\newblock Strong convergence of the stopped {E}uler-{M}aruyama method for nonlinear stochastic differential equations.
\newblock {\em Applied Mathematics and Computation}, 223:389--400, 2013.

\bibitem{mao2008stochastic}
X.~Mao.
\newblock {\em Stochastic differential equations and applications}.
\newblock Horwood Publishing Limited, Chichester, second edition, 2008.

\bibitem{matsumoto1998mersenne}
M.~Matsumoto and T.~Nishimura.
\newblock Mersenne twister: a 623-dimensionally equidistributed uniform pseudo-random number generator.
\newblock {\em ACM Transactions on Modeling and Computer Simulation (TOMACS)}, 8(1):3--30, 1998.

\bibitem{Milstein1978method}
G.~N. Milstein.
\newblock {A method with second order accuracy for the integration of stochastic differential equations}.
\newblock {\em Theory of Probability \& Its Applications}, 23(2):396--401, 1978.

\bibitem{Milstein1985Weak}
G.~N. Milstein.
\newblock Weak approximation of solutions of systems of stochastic differential equations.
\newblock {\em Theory of Probability \& Its Applications}, 30(4):706--721, 1985.

\bibitem{Milstein2004stochastic}
G.~N. Milstein and M.~V. Tretyakov.
\newblock {\em Stochastic numerics for mathematical physics}.
\newblock Scientific Computation. Springer-Verlag, Berlin, 2004.

\bibitem{Milstein2005numerical}
G.~N. Milstein and M.~V. Tretyakov.
\newblock Numerical integration of stochastic differential equations with nonglobally {L}ipschitz coefficients.
\newblock {\em SIAM Journal on Numerical Analysis}, 43(3):1139--1154, 2005.

\bibitem{sabanis2016euler}
S.~Sabanis.
\newblock {E}uler approximations with varying coefficients: the case of super-linearly growing diffusion coefficients.
\newblock {\em The Annals of Applied Probability}, 26(4):2083--2105, 2016.

\bibitem{Talay1984efficient}
D.~Talay.
\newblock Efficient numerical schemes for the approximation of expectations of functionals of the solution of a {SDE} and applications.
\newblock In {\em Filtering and control of random processes ({P}aris, 1983)}, volume~61 of {\em Lect. Notes Control Inf. Sci.}, pages 294--313. Springer, Berlin, 1984.

\bibitem{tretyakov2013fundamental}
M.~V. Tretyakov and Z.~Zhang.
\newblock A fundamental mean-square convergence theorem for {SDE}s with locally {L}ipschitz coefficients and its applications.
\newblock {\em SIAM Journal on Numerical Analysis}, 51(6):3135--3162, 2013.

\bibitem{wang2021weak}
X.~Wang, Y.~Zhao, and Z.~Zhang.
\newblock Weak error analysis for strong approximation schemes of {SDE}s with super-linear coefficients.
\newblock {\em {IMA} Journal of Numerical Analysis}, page drad083, 2023.

\bibitem{Zhang2017preserving}
Z.~Zhang and H.~Ma.
\newblock Order-preserving strong schemes for {SDE}s with locally {L}ipschitz coefficients.
\newblock {\em Applied Numerical Mathematics}, 112:1--16, 2017.

\bibitem{zhao2024weak}
Y.~Zhao and X.~Wang.
\newblock Weak approximation schemes for sdes with super-linearly growing coefficients.
\newblock {\em Applied Numerical Mathematics}, 198:176--191, 2024.

\end{thebibliography}
\appendix
\section{Moment estimates for an SDE with random forcing}
\label{app:gen-SDE-well-pose}
In this section, we establish general moment estimates for the SDE \eqref{eq:Problem_SDE}, which will be used in the analysis in Appendix \ref{appen:deri-wrt-ini-val}. 

Let $ \mathcal{H}_1 $, $\mathcal{H}_2$ be  random functions defined on $\Omega \times [0,T]$ and be adapted to the natural filtration generated by $W(t)$. 
\begin{assumption}
\label{ass:general-fun-growth-con}
There exists a constant
$K \geq 0$ and  $q_1 \geq 1,\, q_2 \geq 2$, such that 
for all $t \in [0,T]$,
\begin{equation*}
\mean{\int_0^t
|\mathcal{H}_1(\omega,s)|^{q_1}\,ds} \leq K, \quad 
\mean{\int_0^t|\mathcal{H}_2(\omega,s)|^{q_2}\,ds} \leq K. 
\end{equation*}
\end{assumption}
\begin{lem}
\label{lem:general-SDE-form-mom-bou}
Under Assumption \ref{ass:coefficient_function_assumption} with sufficiently large $p_0 \geq 2 $
and Assumption \ref{ass:general-fun-growth-con}, the stochastic differential equation
\begin{equation}
\label{eq:general-SDE-form}
     Z(t) = Z(0)+ \int_{0}^{t}\big( Df(X(s)) Z(s) + \mathcal{H}_1(\omega,s) \big) \,\dd s
    + \int_{0}^{t}\big( Dg(X(s)) Z(s) + \mathcal{H}_2(\omega, s) \big) \,\dd W(s),
\end{equation}
admits a unique strong solution $\{Z(t) : t \in [0, T ]\}$ such that for any
$ p_2 \in [1, q_1 \wedge q_2 \wedge  \frac{p_0+1}{2}]$, 
\begin{equation}
\label{eq:general-solu-mom-bound}
\E [| Z(t)|^{p_2}] 
  \leq C_{p_2}
     \E \Big[
     (|Z(0)|^2)^{p_2}
 + \int_{0}^{t} |\mathcal{H}_1(\omega,s)|^{p_2} \, \dd s
 +\int_{0}^{t} |\mathcal{H}_2(\omega,s)|^{p_2} \, \dd s \Big]
 \exp(C_{p_2}T),
\, t \in [0,T], 
\end{equation}
where the constant $C > 0$ depends only on $K$, $p_2$ and $T$. Here $q_1$ and $q_2$ are from  Assumption \ref{ass:general-fun-growth-con}.
\end{lem}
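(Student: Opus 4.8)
The plan is to read \eqref{eq:general-SDE-form} as a \emph{linear} SDE in $Z$ with random, adapted coefficient matrices $Df(X(s))$, $Dg(X(s))$ and additive forcing $\mathcal{H}_1,\mathcal{H}_2$, and to extract both well-posedness and the moment bound from an It\^o--Gronwall argument whose engine is the one-sided condition \eqref{eq:assum-f-g}. For existence and uniqueness I would first note that, since $f,g\in\mathbf{C}^{2q+2}$, the maps $z\mapsto Df(X(s))z$ and $z\mapsto Dg(X(s))z$ are linear, hence globally Lipschitz in $z$ with (random) Lipschitz constants $|Df(X(s))|$ and $|Dg(X(s))|$. Because $X$ has almost surely continuous paths, these constants are pathwise bounded on $[0,T]$, so the standard theory for linear It\^o SDEs with adapted coefficients (together with the integrability of $\mathcal H_1,\mathcal H_2$ from Assumption \ref{ass:general-fun-growth-con}) yields a unique adapted strong solution; the a priori moment bound derived below then certifies the claimed integrability.

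For the moment estimate I would apply It\^o's formula to the regularized Lyapunov function $V_{\varepsilon}(z)=(|z|^{2}+\varepsilon)^{p_2/2}$, which is smooth across the origin and therefore accommodates non-integer $p_2$, and I would localize with the stopping times $\tau_R:=\inf\{t:|Z(t)|\ge R\}$ so that the emerging stochastic integral is a genuine martingale. Writing the drift as $b=Df(X)Z+\mathcal{H}_1$ and the diffusion as $\sigma=Dg(X)Z+\mathcal{H}_2$, the It\^o drift produced by $V_\varepsilon$ contains the $Z$-linear term $p_2(|Z|^{2}+\varepsilon)^{p_2/2-1}\langle Z,Df(X)Z\rangle$ together with the second-order term generated by $\sigma$.

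The crux is to combine these two contributions. After bounding $|\sigma^{\top}Z|^{2}\le |Z|^{2}|\sigma|^{2}$ (for $p_2\ge 2$; the remaining It\^o correction is of a favourable sign when $1\le p_2<2$) and splitting $|Dg(X)Z+\mathcal{H}_2|^{2}\le (1+\delta)|Dg(X)Z|^{2}+(1+\delta^{-1})|\mathcal{H}_2|^{2}$ with $\delta=1$, the coefficient multiplying $|Dg(X)Z|^{2}$ becomes $\tfrac{(p_2-1)(1+\delta)}{2}\le\tfrac{p_0-1}{2}$, precisely because $p_2\le\tfrac{p_0+1}{2}$. This is exactly the arithmetic that lets me invoke \eqref{eq:assum-f-g} to dominate the whole $Z$-linear package by $c_{p_0}|Z|^{p_2}$ (up to $\varepsilon$-corrections vanishing in the limit), and it is where the threshold $\tfrac{p_0+1}{2}$ is forced. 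The leftover forcing terms $\langle Z,\mathcal{H}_1\rangle$ and $|Z|^{p_2-2}|\mathcal{H}_2|^{2}$ I would absorb by Young's inequality with conjugate exponents $\bigl(\tfrac{p_2}{p_2-1},\,p_2\bigr)$ and $\bigl(\tfrac{p_2}{p_2-2},\,\tfrac{p_2}{2}\bigr)$, producing $C|Z|^{p_2}+C|\mathcal{H}_i|^{p_2}$.

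Taking expectations kills the martingale, and I arrive at
\[
\E\big[|Z(t\wedge\tau_R)|^{p_2}\big]\le C_{p_2}\,\E\big[(|Z(0)|^{2})^{p_2}\big]+C_{p_2}\int_{0}^{t}\E\big[|Z(s\wedge\tau_R)|^{p_2}\big]\,\dd s+C_{p_2}\,\E\!\int_{0}^{t}\big(|\mathcal{H}_1(\omega,s)|^{p_2}+|\mathcal{H}_2(\omega,s)|^{p_2}\big)\,\dd s .
\]
The forcing integral is finite for $p_2\le q_1\wedge q_2$ by H\"older's inequality applied to Assumption \ref{ass:general-fun-growth-con}, so Gronwall's inequality yields \eqref{eq:general-solu-mom-bound} with the factor $\exp(C_{p_2}T)$ uniformly in $R$; letting $R\to\infty$ and $\varepsilon\to 0$ via Fatou's lemma and monotone convergence finishes the argument. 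I expect the main obstacle to be the coefficient matching in the crux step: keeping the monotonicity constant of \eqref{eq:assum-f-g} large enough to dominate the amplified diffusion coefficient is exactly what pins $p_2$ below $\tfrac{p_0+1}{2}$, and the sub-quadratic range $1\le p_2<2$ additionally requires handling the noise forcing term carefully (the regularization, localization, and Young splittings being otherwise routine bookkeeping).
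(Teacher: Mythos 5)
Your proposal is correct and follows essentially the same route as the paper, which only sketches this proof in two sentences (a truncation argument for well-posedness, then It\^o's formula combined with \ref{ass:a3} and Gronwall for the moment bound); your expansion correctly identifies that the splitting of $|Dg(X)Z+\mathcal{H}_2|^2$ is what forces the threshold $p_2\le\frac{p_0+1}{2}$, since $(p_2-1)\le\frac{p_0-1}{2}$ is exactly that condition. The only loose point is the Young pairing $\bigl(\tfrac{p_2}{p_2-2},\tfrac{p_2}{2}\bigr)$, which is invalid for $1\le p_2<2$; as you anticipate, that range is handled by first proving the estimate for $p_2\ge 2$ and then applying Jensen's inequality, exactly as the paper does in the analogous subquadratic step of Proposition \ref{lem:solution-wrt-inital-value-con}.
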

The existence and uniqueness of the solution can be proved
by applying a truncation argument and 
Assumption \ref{ass:coefficient_function_assumption}, see e.g., in \cite[Chapter 2]{mao2008stochastic}.
Applying It\^{o}'s formula and Assumption 
\ref{ass:a3} leads to the moment bound \eqref{eq:general-solu-mom-bound}.
%

If $\mathcal{H}_1(\omega,t,\epsilon)$ and $\mathcal{H}_2(\omega,t,\epsilon)$  depend on $\epsilon$ and have limits in a certain sense when $\epsilon\to0$, then we can show that the solution to the resulting equation, denoted by $\mathcal{S}_{\epsilon}(t)$, also has a limit.

\begin{cor}
\label{cor:general-SDE-form-con-beha}
 Under the same conditions in Lemma \ref{lem:general-SDE-form-mom-bou}, if there exists $p_3\geq 1$ and $\bar{p_3} \geq 2$ such that
\begin{equation}
\label{eq:ass-limit-behavior-H-fun}
\lim_{\epsilon \rightarrow 0}
\E\big[\int_0^t
|\mathcal{H}_1(\omega,s,\epsilon)|^{p_3}\,ds \big] = 0, \quad
\lim_{\epsilon \rightarrow 0}
\E \big[\int_0^t|\mathcal{H}_2(\omega,s,\epsilon)|^{\bar{p}_3}\,ds \big] = 0,
\end{equation}
then  
it holds that
\begin{equation*}
    \lim_{\epsilon \rightarrow 0} \E \big[|\mathcal{S}_{\epsilon}(t)|^{\widetilde{p}_3} \big] \leq C_{\widetilde{p}_3} \lim_{\epsilon \rightarrow 0} \E \big[|\mathcal{S}_{\epsilon}(0)|^{\widetilde{p}_3}\big],\quad \widetilde{p}_3 \in [1, p_3 \wedge \bar{p_3} \wedge  \frac{p_0+1}{2}],
\end{equation*}
where $ C_{\widetilde{p}_3} $ is independent of $ \epsilon $ but dependent on $ \widetilde{p}_3 $ and $T$.
\end{cor}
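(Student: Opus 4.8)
The plan is to derive the conclusion directly from the quantitative moment bound \eqref{eq:general-solu-mom-bound} of Lemma \ref{lem:general-SDE-form-mom-bou}, applied to the $\epsilon$-dependent equation whose solution is $\mathcal{S}_\epsilon$, and then to pass to the limit $\epsilon\to0$. First I would fix $\widetilde{p}_3\in[1,\,p_3\wedge\bar{p}_3\wedge\frac{p_0+1}{2}]$ and note that the hypothesis \eqref{eq:ass-limit-behavior-H-fun} guarantees that the quantities $\E[\int_0^t|\mathcal{H}_1(\omega,s,\epsilon)|^{p_3}\,\dd s]$ and $\E[\int_0^t|\mathcal{H}_2(\omega,s,\epsilon)|^{\bar{p}_3}\,\dd s]$, being convergent as $\epsilon\to0$, are bounded by a single constant $K$ for all sufficiently small $\epsilon$. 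Consequently Assumption \ref{ass:general-fun-growth-con} holds with $q_1=p_3$ and $q_2=\bar{p}_3$ uniformly in $\epsilon$, so Lemma \ref{lem:general-SDE-form-mom-bou} applies to every $\mathcal{S}_\epsilon$ and, crucially, produces a constant $C_{\widetilde{p}_3}$ that depends only on $K$, $\widetilde{p}_3$ and $T$, hence not on $\epsilon$.

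Taking $p_2=\widetilde{p}_3$ in \eqref{eq:general-solu-mom-bound} then yields, uniformly in $\epsilon$,
\begin{equation*}
\E\big[|\mathcal{S}_\epsilon(t)|^{\widetilde{p}_3}\big]
\leq C_{\widetilde{p}_3}\,\E\Big[|\mathcal{S}_\epsilon(0)|^{\widetilde{p}_3}
+\int_0^t|\mathcal{H}_1(\omega,s,\epsilon)|^{\widetilde{p}_3}\,\dd s
+\int_0^t|\mathcal{H}_2(\omega,s,\epsilon)|^{\widetilde{p}_3}\,\dd s\Big],
\end{equation*}
where I have absorbed the factor $\exp(C_{\widetilde{p}_3}T)$ into $C_{\widetilde{p}_3}$. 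The next step is to show the two forcing integrals on the right-hand side vanish in the limit. Since $\widetilde{p}_3\leq p_3$ and the horizon $[0,t]\subseteq[0,T]$ is finite, Hölder's inequality in $s$ combined with Jensen's inequality for the concave map $x\mapsto x^{\widetilde{p}_3/p_3}$ gives
\begin{equation*}
\E\Big[\int_0^t|\mathcal{H}_1(\omega,s,\epsilon)|^{\widetilde{p}_3}\,\dd s\Big]
\leq t^{\,1-\widetilde{p}_3/p_3}\Big(\E\Big[\int_0^t|\mathcal{H}_1(\omega,s,\epsilon)|^{p_3}\,\dd s\Big]\Big)^{\widetilde{p}_3/p_3},
\end{equation*}
which tends to $0$ by \eqref{eq:ass-limit-behavior-H-fun}; the $\mathcal{H}_2$-integral is treated identically using $\widetilde{p}_3\leq\bar{p}_3$.

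Finally I would take $\limsup_{\epsilon\to0}$ in the moment bound and use subadditivity of the limit superior together with the vanishing of the two forcing terms to obtain
\begin{equation*}
\limsup_{\epsilon\to0}\E\big[|\mathcal{S}_\epsilon(t)|^{\widetilde{p}_3}\big]
\leq C_{\widetilde{p}_3}\,\lim_{\epsilon\to0}\E\big[|\mathcal{S}_\epsilon(0)|^{\widetilde{p}_3}\big],
\end{equation*}
which is the asserted inequality (with $\lim$ replacing $\limsup$ whenever the limit on the left exists). The only point that requires genuine care is the $\epsilon$-uniformity of the constant $C_{\widetilde{p}_3}$: this is the main obstacle, and it is resolved precisely by extracting the single bound $K$ from the convergence hypothesis \eqref{eq:ass-limit-behavior-H-fun} so that Lemma \ref{lem:general-SDE-form-mom-bou} is invoked with $\epsilon$-independent data. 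Every other step is a direct application of that lemma and of elementary integral inequalities.
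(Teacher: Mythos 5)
Your proof is correct and follows exactly the route the paper intends: the corollary is stated without an explicit proof as an immediate consequence of the moment bound \eqref{eq:general-solu-mom-bound}, and your argument supplies the natural missing details (applying the bound with $p_2=\widetilde{p}_3$, the H\"older step on $\Omega\times[0,t]$ to pass from exponent $p_3$ (resp.\ $\bar{p}_3$) down to $\widetilde{p}_3$, and the $\epsilon$-uniformity of the constant). No gaps.
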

\section{Derivatives of the solutions in the  initial condition}
\label{appen:deri-wrt-ini-val}
In this section, we focus on the SDEs that characterize the derivatives of $X^{0,x_0}$ with respect to the initial condition $x_0$.
We adopt the notation used in \cite{Cerrai2001second}. 
For random functions, we refer to Page 166 of \cite{khasminskii2011stochastic} and below for the definition of differentiability in the $L^{p}(\Omega) (p \geq 1)$ sense.
For simplicity, we present the proofs for the derivatives up to order two. 

\begin{defn}[$L^{p}$ differentiable]
\label{defi:mean_square_differential}
Let 
   $ \Psi \colon 
     \Omega \times \mathbb{R}^{d} \rightarrow
       \mathbb{R} $
   and 
   	$  \phi_{i} \colon \Omega \times \mathbb{R}^{d} 
   	 \rightarrow \mathbb{R} $
be random functions and suppose for each $i \in \{ 1,2,\cdots, d \}$
   \begin{equation}
     \lim_{\tau \rightarrow 0 }
        \E 
        \Big [ \Big|
         \frac{1}{\tau}
         \big [
          \Psi( x + \tau e_{i})
        -
          \Psi( x ) \big]
         -
          \phi_i(x) 
          \Big|^{p}
         \Big]
        = 0,
        \quad
    p \geq 1,
   \end{equation}
where $e_i \in \R^d $ is a unit vector in $\R^d$, with the $i$-th element being $1$. 
Then $ \Psi  $ is called to be differentiable, with
$ \phi = (\phi_{1}, \phi_{2}, \cdots, \phi_{d} ) $
being the derivative (in the $L^{p}(\Omega)$ sense) of $ \Psi  $  
and  we write  $ \partial_{(i)}\Psi = \phi_{i} $.
\end{defn}
This definition can be extended to the case of vector-valued functions by considering their components separately.
\par
We now apply conclusions in \ref{app:gen-SDE-well-pose} to analyze the derivatives of the solution to the SDE  \eqref{eq:Problem_SDE} concerning the initial conditions in the sense of $L^{p}(\Omega)$ differentiability. 
To do so, we first examine the limit behavior of solutions for the SDEs with perturbed initial conditions.
\begin{prop}
\label{lem:solution-wrt-inital-value-con}
%
Let Assumption \ref{ass:coefficient_function_assumption}  and the inequality \ref{assu:a3'-mono-condi} hold.
Then the solution $X^{0,x_0}(t)$ to \eqref{eq:Problem_SDE} satisfies, for any  $\epsilon>0$,
 \begin{equation}
 \label{eq:increment-solution-esti}
  \E \big[ |X^{0,x_0+\epsilon e_i}(t)-X^{0,x_0}(t)|^{p_4}\big] \leq 
  |\epsilon|^{ p_4} \exp{(C_{p_4}T)}, \quad
  p_4 \in [1, p_0'],
 \end{equation}
where $e_i \in \R^d $
is defined in Definition \ref{defi:mean_square_differential}
and $ C_{p_4} $ is independent of $ \epsilon $ but dependent on $ p_4 $.
For brevity, we denote
 \begin{align}
 \label{eq:short-notation-incre-sol-f}
  \chi_{x_0,\epsilon}(s) 
  &:= X^{0,x_0+\epsilon e_i}(s) -X^{0,x_0}(s),
  \,
  \Delta f^{x_0+\epsilon e_i,x_0}(s):=
  f \big( X^{0,x_0+\epsilon e_i}(s) \big)
  - f \big( X^{0,x_0}(s) \big), \\
 \label{eq:short-notation-incre-g}
  \Delta g^{x_0+\epsilon e_i,x_0}(s)
  &:= g \big( X^{0,x_0+\epsilon e_i}(s) \big)
  - g \big( X^{0,x_0}(s) \big).
 \end{align}
\end{prop}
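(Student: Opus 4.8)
The plan is to analyze the difference process $\chi := \chi_{x_0,\epsilon}$ directly, estimate its $p_4$-th moment by It\^o's formula combined with the one-sided Lipschitz condition \ref{assu:a3'-mono-condi}, and close the estimate with Gronwall's inequality. First I would note that subtracting the integral forms of the two solutions $X^{0,x_0+\epsilon e_i}$ and $X^{0,x_0}$ shows that $\chi$ solves
\[
\dd \chi(s) = \Delta f^{x_0+\epsilon e_i,x_0}(s)\,\dd s + \Delta g^{x_0+\epsilon e_i,x_0}(s)\,\dd W(s), \qquad \chi(0)=\epsilon e_i,
\]
with $\Delta f, \Delta g$ as in \eqref{eq:short-notation-incre-sol-f}--\eqref{eq:short-notation-incre-g}. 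Writing $\Delta f,\Delta g$ for these increments, the target is a differential inequality for $\E[|\chi(s)|^{p_4}]$ whose coefficient does not depend on $\epsilon$.

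For $p_4\geq 2$, applying It\^o's formula to $|\chi(s)|^{p_4}$ and taking expectations (the local martingale term has zero mean after localization, see below) gives
\[
\tfrac{\dd}{\dd s}\E[|\chi(s)|^{p_4}] = \E\Big[p_4|\chi|^{p_4-2}\langle\chi,\Delta f\rangle + \tfrac{p_4}{2}|\chi|^{p_4-2}|\Delta g|^2 + \tfrac{p_4(p_4-2)}{2}|\chi|^{p_4-4}|\chi^\top\Delta g|^2\Big].
\]
Regrouping the two diffusion terms and using $|\chi^\top\Delta g|^2\leq|\chi|^2|\Delta g|^2$ (Cauchy--Schwarz), the residual term $\tfrac{p_4(p_4-2)}{2}|\chi|^{p_4-4}(|\chi^\top\Delta g|^2-|\chi|^2|\Delta g|^2)$ is nonpositive for $p_4\geq 2$ and can be discarded, leaving
\[
\tfrac{\dd}{\dd s}\E[|\chi(s)|^{p_4}] \leq p_4\,\E\Big[|\chi|^{p_4-2}\big(\langle\chi,\Delta f\rangle + \tfrac{p_4-1}{2}|\Delta g|^2\big)\Big].
\]
The key algebraic step is then to invoke \ref{assu:a3'-mono-condi} with $x=X^{0,x_0+\epsilon e_i}(s)$, $y=X^{0,x_0}(s)$: since $p_4\leq p_0'$ and $|\Delta g|^2\geq 0$, we have $\langle\chi,\Delta f\rangle+\tfrac{p_4-1}{2}|\Delta g|^2 \leq \langle\chi,\Delta f\rangle+\tfrac{p_0'-1}{2}|\Delta g|^2 \leq c_{p_0'}|\chi|^2$, whence $\tfrac{\dd}{\dd s}\E[|\chi(s)|^{p_4}]\leq p_4 c_{p_0'}\E[|\chi(s)|^{p_4}]$. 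Gronwall's inequality and $|\chi(0)|=|\epsilon|$ then yield $\E[|\chi(t)|^{p_4}]\leq|\epsilon|^{p_4}\exp(p_4 c_{p_0'}t)$, i.e.\ \eqref{eq:increment-solution-esti} with $C_{p_4}:=p_4 c_{p_0'}$, manifestly independent of $\epsilon$. The sub-quadratic range $p_4\in[1,2)$ follows from the case $p_4=2$ together with Jensen's inequality $\E[|\chi|^{p_4}]\leq(\E[|\chi|^2])^{p_4/2}$.

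The main obstacle is the rigorous justification of the It\^o--Gronwall step, since the a priori moment bound \eqref{eq:esti_sol_SDE_wrt_initial} supplies only moments up to order $p_0'-\varepsilon$ while the integrands involve the superlinearly growing coefficients through $\Delta f$. I would resolve this by a standard localization: introduce stopping times $\tau_R=\inf\{s:|X^{0,x_0+\epsilon e_i}(s)|\vee|X^{0,x_0}(s)|>R\}$, run the entire computation on $[0,t\wedge\tau_R]$ where every term is bounded and the stochastic integral is a genuine martingale, obtain the Gronwall estimate uniformly in $R$, and pass to the limit $R\to\infty$ using Fatou's lemma on the left together with the uniform moment bounds \eqref{eq:esti_sol_SDE_wrt_initial} and the polynomial growth \eqref{contronal:f} to control the right-hand side. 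The mild nonsmoothness of $x\mapsto|x|^{p_4}$ at the origin (relevant only when $p_4<2$) is handled either by regularizing $|\chi|^{p_4}$ as $(|\chi|^2+\eta)^{p_4/2}$ and letting $\eta\to0$, or simply by deducing the sub-quadratic case from $p_4=2$ as above.
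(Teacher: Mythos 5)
Your proposal is correct and follows essentially the same route as the paper: apply It\^o's formula to $|\chi_{x_0,\epsilon}|^{p_4}$, absorb the diffusion terms into the coefficient $\tfrac{p_4-1}{2}|\Delta g|^2$, invoke \ref{assu:a3'-mono-condi} to get a linear differential inequality, close with Gronwall, and deduce the range $p_4\in[1,2)$ from the $p_4\ge 2$ case by H\"older/Jensen. Your added discussion of localization via stopping times is a welcome rigor refinement that the paper leaves implicit, but it does not change the substance of the argument.
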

\begin{proof}
By the definition of $ X^{0,x_0+\epsilon e_i}(t) $,  
 \begin{equation*}
    X^{0,x_0+\epsilon e_i}(t) = x_0+\epsilon e_i
 + \int_0^t f( X^{0,x_0+\epsilon e_i}(s) ) 
 \, \dd s 
+
\int_0^t g( X^{0,x_0+\epsilon e_i}(s) ) 
 \, \dd W (s),
\quad
0 < t \leq T.
 \end{equation*}
Using the It\^{o} formula on the functional $|x|^{p_4}$ for $x \in \R^d$, $p_4 \geq 2$ and the Schwarz inequality, we obtain that 
\begin{eqnarray*}
  |\chi_{x_0,\epsilon}(t)|^{p_4} 
   & \leq &
     |\epsilon e_i|^{p_4}  
     + p_4 \int_{0}^{t}
	  |\chi_{x_0,\epsilon}(s)|^{p_4-2} 
  \langle \chi_{x_0,\epsilon}(s), 
	\Delta f^{x_0+\epsilon e_i,x_0}(s)
  \rangle 
	   \, \dd s \\ 
	  &&+
	  \frac{p_4 (p_4-1)}{2} 
	   \int_{0}^{t}
	   |\chi_{x_0,\epsilon}(s)|^{p_4-2} 
	| \Delta g^{x_0+\epsilon e_i,x_0}(s) 
        |^{2} 
	   \, \dd s  \notag \\
    && +
      p_4
     \int_{0}^{t}
      |\chi_{x_0,\epsilon}(s)|^{p_4 -2} 
     \langle
      \chi_{x_0,\epsilon}(s),
      \Delta g^{x_0+\epsilon e_i,x_0}(s)
       \, \dd W(s)
       \rangle.
\end{eqnarray*}
Applying the inequality \ref{assu:a3'-mono-condi} and  the properties of It\^{o} integrals implies
 \begin{equation}
  \E [  |\chi_{x_0,\epsilon}(t)|^{p_4} ]  \leq |\epsilon|^{p_4}  
   + C_{p_4} \int_0^{t} 
   \E [  |\chi_{x_0,\epsilon}(s)|^{p_4} ]
   \dd s,  \quad 
   p_4 \in [2, p_0'],
 \end{equation}
where $ C_{p_4} $ is independent of $ \epsilon $ but dependent on $ p_4 $.
The Gronwall inequality shows that for all $t \in [0,T]$,
$\E \big[ |X^{0,x_0+\epsilon e_i}(t)-X^{0,x_0}(t)|^{p_4}\big] \leq 
  |\epsilon|^{ p_4} \exp{(C_{p_4}T)}, 
  p_4 \in [2, p_0'].$
For the case $p_4 \in [1,2)$, applying the H\"older inequality immediately suggests \eqref{eq:increment-solution-esti}. Thus we complete the proof.
\end{proof}
\begin{cor}
 \label{cor:well-posedness-deri-first}
 Under the same assumptions of Proposition \ref{lem:solution-wrt-inital-value-con} with 
 $p_0 \geq 2r + 1$
 ($r$ is defined in \eqref{ass:drift-f}),
  for each $ i \in  \{1,\dots, d\} $, $  \xi_{i}(t):= \frac{\partial X^{0,x_0}(t)}{ \partial {x^{(i)}_{0}}} \in \R^d $ exists and 
satisfies the following stochastic differential equation
\begin{equation}
	\label{eq:first-variation-equation}
	\left\{
	\begin{aligned}
   \dd \xi_{i}(t) & = 
     Df( X(t) ) \xi_{i}(t) \, \dd t 
   + Dg(X(t)) \xi_{i}(t) \, \dd W(t),
      \quad t \in (0,T], \\
      \xi_{i}(0) & = e_i,
	\end{aligned}\right.
\end{equation}
where $e_i \in \R^d $ comes from Definition \ref{defi:mean_square_differential}.
Furthermore, there exists a constant $ C > 0 $, such that
 \begin{align}
   \label{lem:mom-bound-first-deri}
    & \E [|\xi_{i}(t)|^{p_5}] 
      \leq C \exp(C_{p_5} t),
    \quad 
    1 \leq p_5 \leq p_0, 
    \quad \text{for all \,} t \in [0,T], \\
  \hbox{and}   \quad
   \label{lem:first-derivative}
   & \lim_{\epsilon \rightarrow 0} 
  \E \big[ |\frac{\chi_{x_0,\epsilon}(t)} {\epsilon} - \xi_{i}(t)|^{p_6} \big]   = 0, \quad 1 \leq p_6 \leq 
  \frac{p_0}{(2r \vee 0 )+1},
    \quad \text{for all \,} t \in [0, T],
 \end{align}
where $\chi_{x_0,\epsilon}(t) =: X^{0,x_0+\epsilon e_i}(t) -X^{0,x_0}(t)$.
\end{cor}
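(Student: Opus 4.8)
The plan is to construct $\xi_i$ directly as the solution of the linear variation equation \eqref{eq:first-variation-equation}, and then to verify a posteriori that it is the $L^{p_6}$-derivative of $X^{0,x_0}$ by showing that the normalized increments $\chi_{x_0,\epsilon}/\epsilon$ converge to it.

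\emph{Step 1 (existence and the moment bound \eqref{lem:mom-bound-first-deri}).} Equation \eqref{eq:first-variation-equation} is linear in $\xi_i$ with random adapted coefficients $Df(X(t))$ and $Dg(X(t))$; by \ref{ass:a1}--\ref{ass:a2} these grow polynomially in $X(t)$, which has moments up to order $p_0'$, so \eqref{eq:first-variation-equation} is the special case $\mathcal{H}_1=\mathcal{H}_2=0$ of \eqref{eq:general-SDE-form} and admits a unique strong solution by Lemma \ref{lem:general-SDE-form-mom-bou}. To reach the sharper range $1\le p_5\le p_0$, rather than quoting the lemma I would apply It\^o's formula to $|\xi_i(t)|^{p_5}$ directly: the drift and quadratic-variation contributions combine into $p_5|\xi_i|^{p_5-2}\big(\langle \xi_i,Df(X)\xi_i\rangle+\tfrac{p_5-1}{2}|Dg(X)\xi_i|^2\big)$, and since $p_5\le p_0$ the bracket is bounded by $c_{p_0}|\xi_i|^2$ via \eqref{eq:assum-f-g} with $y=\xi_i$. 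Taking expectations and invoking Gronwall's inequality yields $\E[|\xi_i(t)|^{p_5}]\le\exp(C_{p_5}t)$ with $C_{p_5}=p_5c_{p_0}$, the subrange $p_5\in[1,2)$ following from Jensen's inequality.

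\emph{Step 2 (equation for the error).} Using the fundamental theorem of calculus I would write $\Delta f^{x_0+\epsilon e_i,x_0}(s)=A_\epsilon(s)\,\chi_{x_0,\epsilon}(s)$ and $\Delta g^{x_0+\epsilon e_i,x_0}(s)=B_\epsilon(s)\,\chi_{x_0,\epsilon}(s)$, where $A_\epsilon(s)=\int_0^1 Df\big(X^{0,x_0}(s)+\theta\chi_{x_0,\epsilon}(s)\big)\,\dd\theta$ and $B_\epsilon$ is defined analogously from $Dg$. Then $\chi_{x_0,\epsilon}/\epsilon$ solves a linear SDE with coefficients $A_\epsilon,B_\epsilon$ and initial value $e_i$. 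Setting $\eta_\epsilon:=\chi_{x_0,\epsilon}/\epsilon-\xi_i$ and rearranging, $\eta_\epsilon$ satisfies \eqref{eq:general-SDE-form} with $Z(0)=0$ and forcing
\[
\mathcal{H}_1(\omega,s,\epsilon)=\big(A_\epsilon(s)-Df(X(s))\big)\tfrac{\chi_{x_0,\epsilon}(s)}{\epsilon},\qquad
\mathcal{H}_2(\omega,s,\epsilon)=\big(B_\epsilon(s)-Dg(X(s))\big)\tfrac{\chi_{x_0,\epsilon}(s)}{\epsilon}.
\]

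\emph{Step 3 (vanishing forcing and conclusion).} I would then invoke Corollary \ref{cor:general-SDE-form-con-beha}: since $\eta_\epsilon(0)=0$, it remains only to check the hypothesis \eqref{eq:ass-limit-behavior-H-fun}, i.e.\ that $\mathcal{H}_1,\mathcal{H}_2$ vanish in $L^{p_3}(\Omega\times[0,T])$ as $\epsilon\to0$. Here $A_\epsilon(s)-Df(X(s))=\int_0^1[Df(X+\theta\chi_{x_0,\epsilon})-Df(X)]\,\dd\theta$, which by another use of the fundamental theorem of calculus and the polynomial growth of $D^2f$ from \ref{ass:a1} is bounded by $C(1+|X|^{(2r-1)\vee0}+|\chi_{x_0,\epsilon}|^{(2r-1)\vee0})|\chi_{x_0,\epsilon}|$; hence $|\mathcal{H}_1|\le C(1+|X|^{(2r-1)\vee0}+|\chi_{x_0,\epsilon}|^{(2r-1)\vee0})|\chi_{x_0,\epsilon}|^2/|\epsilon|$. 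Combining the increment estimate $\E[|\chi_{x_0,\epsilon}(s)|^{p_4}]\le|\epsilon|^{p_4}\exp(C_{p_4}T)$ of Proposition \ref{lem:solution-wrt-inital-value-con} (valid up to $p_4=p_0'$) with the moment bounds on $X$, a H\"older splitting gives $\E[\int_0^t|\mathcal{H}_1|^{p_3}\,\dd s]\le C|\epsilon|^{p_3}\to0$, and likewise for $\mathcal{H}_2$. Corollary \ref{cor:general-SDE-form-con-beha} then yields $\lim_{\epsilon\to0}\E[|\eta_\epsilon(t)|^{p_6}]=0$, which is \eqref{lem:first-derivative} and simultaneously identifies $\xi_i$ as $\partial X^{0,x_0}/\partial x_0^{(i)}$ in the $L^{p_6}$ sense of Definition \ref{defi:mean_square_differential}.

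\emph{Main obstacle.} The delicate point is the moment budget in Step 3: the forcing couples a polynomially growing factor of $X$ (needing high moments of the solution) with the normalized increment $\chi_{x_0,\epsilon}/\epsilon$ (needing the sharp $O(|\epsilon|)$ decay of $\chi_{x_0,\epsilon}$ in a sufficiently large $L^{p_4}$ norm). Balancing the two H\"older exponents while respecting the ceilings $p_4\le p_0'$ and $\widetilde p_3\le p_3\wedge\bar p_3\wedge\tfrac{p_0+1}{2}$ of Corollary \ref{cor:general-SDE-form-con-beha} is precisely what forces the admissible range $p_6\le p_0/((2r\vee0)+1)$ and the standing requirement $p_0\ge 2r+1$; carefully verifying these inequalities is the crux of the argument.
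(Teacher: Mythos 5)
Your proposal is correct and follows essentially the same route as the paper: existence and the bound \eqref{lem:mom-bound-first-deri} are obtained by viewing \eqref{eq:first-variation-equation} as the $\mathcal{H}_1=\mathcal{H}_2=0$ case of \eqref{eq:general-SDE-form} (with the It\^o/Gronwall argument using \eqref{eq:assum-f-g}), and \eqref{lem:first-derivative} is reduced, exactly as in the paper, to verifying \eqref{eq:ass-limit-behavior-H-fun} for the same forcing terms $\mathcal{H}_1,\mathcal{H}_2$ via Corollary \ref{cor:general-SDE-form-con-beha} together with the increment estimate of Proposition \ref{lem:solution-wrt-inital-value-con}. The only (minor) deviation is in the last step, where you extract an explicit rate $O(|\epsilon|^{p_3})$ from a second-order Taylor bound on $Df$, while the paper keeps $\chi_{x_0,\epsilon}/\epsilon$ bounded in a high $L^p$ norm and sends the $Df$-increment factor to zero by dominated convergence; both close the same moment budget and yield the stated range $p_6\le p_0/((2r\vee 0)+1)$.
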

\begin{proof}
Following the lines similar to those in Lemma \ref{lem:general-SDE-form-mom-bou},
\eqref{lem:mom-bound-first-deri} can be derived by observing that $\xi_i(t)$ satisfies the equation \eqref{lem:general-SDE-form-mom-bou}  with $\mathcal{H}_1$ = $\mathcal{H}_2$ = 0, $Z(0) = e_i$.
Below, we apply Corollary \ref{cor:general-SDE-form-con-beha} to derive \eqref{lem:first-derivative}. First of all,
$\frac{\chi_{x_0,\epsilon}(t)} {\epsilon} - \xi_{i}(t)
$ satisfies the equation 
\eqref{eq:general-SDE-form}
with zero initial condition and $\mathcal{H}_1$ and $\mathcal{H}_2$ defined by 
\begin{eqnarray*}
\mathcal{H}_1(\omega,s,\epsilon) & = & 
\frac{f \big( X^{0,x_0+\epsilon e_i}(s) \big)
  - f \big( X^{0,x_0}(s) \big)}{\epsilon} - Df(X^{0,x_0}(s))
 \frac{\chi_{x_0,\epsilon}(s)} {\epsilon}, \notag \\
 \mathcal{H}_2(s)(\omega,s,\epsilon)  & = & \frac{g \big( X^{0,x_0+\epsilon e_i}(s) \big)
  - g \big( X^{0,x_0}(s) \big)}{\epsilon}  - Dg(X^{0,x_0}(s))
 \frac{\chi_{x_0,\epsilon}(s)} {\epsilon}. 
\end{eqnarray*}
It only requires to verify that the corresponding $\mathcal{H}_1$ and $\mathcal{H}_2$ satisfy \eqref{eq:ass-limit-behavior-H-fun}.  
In fact, using the Taylor expansion, the H\"older inequality with  $\frac{1}{\theta_1} + \frac{1}{\theta_2} = 1 $, for $\theta_2 = \frac{p_0}{p_6} $
and Proposition \ref{lem:solution-wrt-inital-value-con}, we have 
  \begin{eqnarray*}  
  & \E\big[\int_{0}^{t} 
  |\mathcal{H}_1|^{p_6} \, \dd s
  \big] & \leq \int_{0}^{t}
  \Big( \E \big[|\int_{0}^{1}
    D f \big( X^{0,x_0}(s) 
    + \lambda (X^{0,x_0+\epsilon e_i}(s)-X^{0,x_0}(s)) \big) - Df(X^{0,x_0}(s)) \, \dd \lambda |^{p_6\theta_1}\big]\Big)^{1/\theta_1}
    \notag \\
&&  \qquad \big( \E \big[|\frac{\chi_{x_0,\epsilon}(s)} {\epsilon} |^{p_6 \theta_2}\big] 
    \big)^{1/\theta_2} \, \dd s
\notag \\ 
  & & \leq C \int_{0}^{t}
  \Big( \E \big[|\int_{0}^{1}
    D f \big( X^{0,x_0}(s) 
    + \lambda (X^{0,x_0+\epsilon e_i}(s)-X^{0,x_0}(s)) \big) - Df(X^{0,x_0}(s)) \, \dd \lambda |^{p_6\theta_1}\big]\Big)^{1/\theta_1} \, \dd s.
  \end{eqnarray*}
By the Assumption \ref{ass:a1}, \eqref{eq:esti_sol_SDE_wrt_initial} and the dominated convergence theorem, we get
\begin{equation}
\label{eq:limit-first-derivative-H1}
    \lim_{\epsilon \rightarrow 0}
    \E\big[\int_{0}^{t}
  |\mathcal{H}_1|^{p_6} \, \dd s \big] = 0,
  \quad 1 \leq p_6 \leq 
  \frac{p_0}{(2r \vee 0 )+1},
  \quad \text{for all \,} t \in [0, T].
\end{equation}
Following a similar way as \eqref{eq:limit-first-derivative-H1} and Assumption \ref{ass:a2}, we have
\begin{equation*}
    \lim_{\epsilon \rightarrow 0} \E\big[\int_{0}^{t}
  |\mathcal{H}_2|^{p_6 \vee 2} \, \dd s \big] = 0,
  \quad 1 \leq p_6 \leq 
  \frac{p_0}{((\rho -1) \vee 0) +1},
  \quad \text{for all \,} t \in [0, T].
\end{equation*}
The proof is thus complete.
\end{proof}

We now extend the above result to higher-order derivatives. 
\begin{prop}
\label{prop:con-first-deri}
 Under the same assumptions of Proposition \ref{lem:solution-wrt-inital-value-con} with $p_0 \geq 2r + 1$, 
the solution $\xi_{i}(t)$ of \eqref{eq:first-variation-equation} satisfy
 \begin{equation}
 \label{eq:con-first-deri}
  \lim_{\epsilon_1 \rightarrow 0}
  \E \big[ |\xi_{i}^{0,x_0+\epsilon_1 e_j}(t)-\xi_{i}^{0,x_0}(t)|^{p_7}\big] = 0, \quad  p_7 \in [1, \frac{p_0}{(2r \vee 0 )+1}],
 \end{equation}
where $e_j \in \R^d $ comes from Definition \ref{defi:mean_square_differential}. 
\end{prop}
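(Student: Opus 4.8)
The plan is to recognize that the difference of the two variational processes solves a linear SDE covered by Lemma~\ref{lem:general-SDE-form-mom-bou} and its Corollary~\ref{cor:general-SDE-form-con-beha}, and then to verify the convergence hypothesis \eqref{eq:ass-limit-behavior-H-fun} on the inhomogeneous forcing. Concretely, set $\zeta^{\epsilon_1}(t):=\xi_i^{0,x_0+\epsilon_1 e_j}(t)-\xi_i^{0,x_0}(t)$. Subtracting the two copies of \eqref{eq:first-variation-equation} and adding and subtracting the terms $Df(X^{0,x_0+\epsilon_1 e_j})\xi_i^{0,x_0}$ and $Dg(X^{0,x_0+\epsilon_1 e_j})\xi_i^{0,x_0}$, one sees that $\zeta^{\epsilon_1}$ satisfies \eqref{eq:general-SDE-form} with $Z(0)=0$, with the linear coefficients evaluated along $X^{0,x_0+\epsilon_1 e_j}$, and with forcing
\begin{align*}
\mathcal{H}_1(\omega,s,\epsilon_1)&=\big(Df(X^{0,x_0+\epsilon_1 e_j}(s))-Df(X^{0,x_0}(s))\big)\,\xi_i^{0,x_0}(s),\\
\mathcal{H}_2(\omega,s,\epsilon_1)&=\big(Dg(X^{0,x_0+\epsilon_1 e_j}(s))-Dg(X^{0,x_0}(s))\big)\,\xi_i^{0,x_0}(s).
\end{align*}
Since the one-sided bound \ref{ass:a3} holds pointwise in the base point, the coefficients $Df(X^{0,x_0+\epsilon_1 e_j})$ and $Dg(X^{0,x_0+\epsilon_1 e_j})$ satisfy the structural hypotheses of Lemma~\ref{lem:general-SDE-form-mom-bou} uniformly in $\epsilon_1$, so Corollary~\ref{cor:general-SDE-form-con-beha} applies once the forcing is shown to vanish.

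The heart of the argument is to check \eqref{eq:ass-limit-behavior-H-fun} for $\mathcal{H}_1,\mathcal{H}_2$. For $\mathcal{H}_1$ I would fix $p_7\le\frac{p_0}{(2r\vee0)+1}$, apply H\"older's inequality with $\tfrac1{\theta_1}+\tfrac1{\theta_2}=1$ and $\theta_2=\frac{p_0}{p_7}$, and bound
\[
\E\Big[\int_0^t|\mathcal{H}_1|^{p_7}\,\dd s\Big]
\le\int_0^t\big(\E\big[|Df(X^{0,x_0+\epsilon_1 e_j})-Df(X^{0,x_0})|^{p_7\theta_1}\big]\big)^{1/\theta_1}\big(\E\big[|\xi_i^{0,x_0}|^{p_0}\big]\big)^{p_7/p_0}\,\dd s.
\]
The second factor is bounded uniformly in $\epsilon_1$ by \eqref{lem:mom-bound-first-deri}. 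For the first factor, Assumption \ref{ass:a1} gives the polynomial growth $|Df(x)|\le C(1+|x|^{2r})$, so the integrand is dominated by $C(1+|X^{0,x_0+\epsilon_1 e_j}|^{2r}+|X^{0,x_0}|^{2r})^{p_7\theta_1}$; the choice $\theta_2=p_0/p_7$ together with $p_7\le\frac{p_0}{(2r\vee0)+1}$ forces $2r\,p_7\theta_1\le p_0\le p_0'$, so this dominating term has expectation bounded uniformly in $\epsilon_1$ by \eqref{eq:esti_sol_SDE_wrt_initial}. Proposition~\ref{lem:solution-wrt-inital-value-con} gives $X^{0,x_0+\epsilon_1 e_j}\to X^{0,x_0}$ in $L^{p}(\Omega)$ as $\epsilon_1\to0$, hence in probability and along a subsequence almost surely; continuity of $Df$ together with the dominated convergence theorem (in its generalized, uniform-integrability form) then sends the first factor to $0$. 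Treating $\mathcal{H}_2$ identically, with $Dg$ in place of $Df$ and using $(\rho-1)\vee0\le r$ from \ref{ass:a2}, yields the analogous limit with an exponent $\bar p_3\ge2$ as required by the Corollary.

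Having verified \eqref{eq:ass-limit-behavior-H-fun}, Corollary~\ref{cor:general-SDE-form-con-beha} applied to $\mathcal{S}_{\epsilon_1}=\zeta^{\epsilon_1}$ gives
\[
\lim_{\epsilon_1\to0}\E\big[|\zeta^{\epsilon_1}(t)|^{p_7}\big]\le C_{p_7}\lim_{\epsilon_1\to0}\E\big[|\zeta^{\epsilon_1}(0)|^{p_7}\big]=0,
\]
since $\zeta^{\epsilon_1}(0)=0$, which is exactly \eqref{eq:con-first-deri}. I expect the main obstacle to be the moment bookkeeping in the H\"older step: one must split the product so that the variational factor stays within the $p_0$ moments available from \eqref{lem:mom-bound-first-deri} while the coefficient-difference factor retains enough integrability (controlled by the $p_0'$ moments of $X$ from \eqref{eq:esti_sol_SDE_wrt_initial}) to legitimize the dominated convergence passage. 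This balance, which reduces to the identity $(2r+1)p_7\le p_0$, is precisely what pins the admissible range to $p_7\le\frac{p_0}{(2r\vee0)+1}$.
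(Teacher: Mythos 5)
Your proposal is correct and follows essentially the same route as the paper: write the difference $\xi_i^{0,x_0+\epsilon_1 e_j}-\xi_i^{0,x_0}$ as a solution of the linear SDE \eqref{eq:general-SDE-form} with zero initial data, verify the vanishing-forcing hypothesis \eqref{eq:ass-limit-behavior-H-fun} via H\"older with $\theta_2=p_0/p_7$, the polynomial growth of $Df,Dg$ from \ref{ass:a1}--\ref{ass:a2}, the uniform moment bounds \eqref{eq:esti_sol_SDE_wrt_initial} and \eqref{lem:mom-bound-first-deri}, and dominated convergence, then invoke Corollary \ref{cor:general-SDE-form-con-beha}. The only (immaterial) difference is the split of the cross term: the paper keeps the linear coefficients along $X^{0,x_0}$ and puts $\xi_i^{0,x_0+\epsilon_1 e_j}$ in the forcing, whereas you keep them along $X^{0,x_0+\epsilon_1 e_j}$ (correctly noting that \ref{ass:a3} makes the constants uniform in $\epsilon_1$) and put $\xi_i^{0,x_0}$ in the forcing.
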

\begin{proof}
We apply Corollary \ref{cor:general-SDE-form-con-beha} to derive \eqref{eq:con-first-deri}, which requires us to verify that the corresponding $\mathcal{H}_1$ and $\mathcal{H}_2$ satisfy \eqref{eq:ass-limit-behavior-H-fun}. Here 
\begin{eqnarray*}
\mathcal{H}_1 &=& 
 Df(X^{0,x_0+\epsilon_1 e_j}(s))\xi_i^{0,x_0+\epsilon_1 e_j}(s) 
 -D f(X^{0,x_0}(s))\xi_i^{0,x_0+\epsilon_1 e_j}(s), \notag \\
 \mathcal{H}_2 &=& Dg(X^{0,x_0+\epsilon_1 e_j}(s))\xi_i^{0,x_0+\epsilon_1 e_j}(s) 
 -Dg(X^{0,x_0}(s))\xi_i^{0,x_0+\epsilon_1 e_j}(s), \quad  \quad
 \mathcal{S}_{\epsilon}(0)=0.
\end{eqnarray*}
Using the H\"older inequality with  $\frac{1}{\theta_1} + \frac{1}{\theta_2} = 1 $, for $\theta_2 =\frac{p_0}{p_7}$ and \eqref{lem:mom-bound-first-deri}, we have
  \begin{eqnarray*}  
  & \E \big[\int_{0}^{t}
  |\mathcal{H}_1|^{p_7} \, \dd s
  \big] & \leq  \int_{0}^{t}
  \Big( \E \big[|
    D f \big(X^{0,x_0+\epsilon e_i}(s) \big) - Df(X^{0,x_0}(s))|^{p_7 \theta_1}\big]\Big)^{1/\theta_1} \big( \E \big[|\xi_i^{0,x_0+\epsilon_1 e_j}(s) |^{p_7\theta_2}\big] 
    \big)^{1/\theta_2} \, \dd s
    \notag \\
  & & \leq 
 C  \int_{0}^{t}
  \Big( \E \big[|
    D f \big(X^{0,x_0+\epsilon e_i}(s) \big) - Df(X^{0,x_0}(s))|^{p_7 \theta_1}\big]\Big)^{1/\theta_1} \, \dd s.
  \end{eqnarray*}
Therefore, from the Assumption \ref{ass:a1}, \eqref{eq:esti_sol_SDE_wrt_initial} and dominated convergence theorem, we get
\begin{equation*}
    \lim_{\epsilon \rightarrow 0}
    \E\big[\int_{0}^{t}
  |\mathcal{H}_1|^{p_7} \, \dd s \big] = 0,
  \quad 1 \leq p_7 \leq 
  \frac{p_0}
  {(2r\vee 0)+1},
  \quad \text{for all \,} t \in [0, T].
\end{equation*}
Similarly, one can show that 
\begin{equation*}
\lim_{\epsilon \rightarrow 0}
    \E\big[\int_{0}^{t}
  |\mathcal{H}_2|^{p_7 \vee 2} \, \dd s \big] = 0,
  \quad 1 \leq p_7 \leq 
  \frac{p_0}
  {((\rho-1)\vee 0)+1},
  \quad \text{for all \,} t \in [0, T].
\end{equation*}
Using the Corollary \ref{cor:well-posedness-deri-first}, we complete the proof.
\end{proof}
\begin{cor}
 \label{cor:well-posedness-deri-second}
 Under the same assumptions of Proposition \ref{lem:solution-wrt-inital-value-con} with $p_0 \geq 4r+4$,
for each $ i,j \in  \{1,\dots, d\} $, $ \zeta_{i,j}(t):= \frac{\partial^2 X^{0,x_0}(t)}{ \partial {x^{(i)}_{0}}\partial {x^{(j)}_{0}}} $ exists and 
 satisfies the following  equation
\begin{equation}
	\label{eq:second-variation-equation}
	\left\{
	\begin{aligned}
   \dd \, \zeta_{i,j}(t) & = 
     Df( X(t) ) \zeta_{i,j}(t) \, \dd t 
   + Dg(X(t)) \zeta_{i,j}(t) \, \dd W(t)
   + \dd \, \mathfrak{S}^{i,j}(t,x_0),
      \quad t \in (0,T], \\
      \zeta_{i,j}(0) & = 0,
	\end{aligned}\right.
\end{equation}
where $ \mathfrak{S}^{i,j}(t,x_0) $ is the process defined by 
\begin{equation*}
 \mathfrak{S}^{i,j}(t,x_0) = \int_{0}^{t} D^2f(X(s))(\xi_i(s), \xi_j(s)) \, \dd s
 + \int_{0}^{t} D^2g(X(s))(\xi_i(s), \xi_j(s)) \, \dd W(s).
\end{equation*}
Furthermore, there exists a constant $ C > 0 $, such that
for all $t \in [0,T]$,
 \begin{align}
   \label{lem:mom-bound-second-deri}
    & \E [|\zeta_{i,j}(t)|^{p_8}] 
      \leq 
      C
      \exp(C p_8 T) 
      (1+|x_0|^{((2r-1) \vee 0) p_8}),
    \quad 
   1 \leq p_8 \leq \frac{p_0}{((2r-1)\vee 0)+2}, \\
  \hbox{and}   \quad
   \label{lem:second-derivative}
   & \lim_{\epsilon_1 \rightarrow 0} 
  \E \big[ |\frac{\xi_i^{0,x_0+\epsilon_1 e_j}(t) - \xi_i^{0,x_0}(t)} {\epsilon_1} - 
  \zeta_{i,j}(t)|^{p_9} \big]   = 0, 
   \quad 
   1 \leq p_9 \leq \frac{p_0}{((2r-1)\vee 0)+(2r\vee 0 )+2}.
 \end{align}
\end{cor}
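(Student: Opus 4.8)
The plan is to follow the template of Corollary~\ref{cor:well-posedness-deri-first}, reducing both assertions to the abstract linear equation \eqref{eq:general-SDE-form} and its two companion results: Lemma~\ref{lem:general-SDE-form-mom-bou} for the moment bound \eqref{lem:mom-bound-second-deri}, and Corollary~\ref{cor:general-SDE-form-con-beha} for the $L^{p_9}$-convergence \eqref{lem:second-derivative}. Throughout I abbreviate $X^0=X^{0,x_0}$, $X^{\epsilon_1}=X^{0,x_0+\epsilon_1 e_j}$, $\xi_i^0=\xi_i^{0,x_0}$, $\xi_i^{\epsilon_1}=\xi_i^{0,x_0+\epsilon_1 e_j}$, and write $s_1:=(2r-1)\vee 0$.

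First I would dispatch the moment bound. Equation \eqref{eq:second-variation-equation} is exactly of the form \eqref{eq:general-SDE-form} with $Z=\zeta_{i,j}$, $Z(0)=0$, and random forcings $\mathcal{H}_1=D^2 f(X^0)(\xi_i^0,\xi_j^0)$, $\mathcal{H}_2=D^2 g(X^0)(\xi_i^0,\xi_j^0)$. To invoke Lemma~\ref{lem:general-SDE-form-mom-bou} I must verify Assumption~\ref{ass:general-fun-growth-con}. Using \ref{ass:a1} and \ref{ass:a2} with $|\alpha|_1=2$ one has $|D^2 f(x)|\leq C(1+|x|^{s_1})$ and an analogous (indeed milder, since $\rho\leq r+1$) bound for $D^2 g$, so that $|\mathcal{H}_1|\leq C(1+|X^0|^{s_1})|\xi_i^0||\xi_j^0|$. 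Applying the generalized H\"older inequality, distributing the exponent so that all three factors are measured at integrability order $(s_1+2)p_8$, and inserting the moment bounds \eqref{eq:esti_sol_SDE_wrt_initial} for $X^0$ and \eqref{lem:mom-bound-first-deri} for $\xi_i^0,\xi_j^0$, gives $\E[\int_0^t|\mathcal{H}_1|^{p_8}\,\dd s]\leq C(1+|x_0|^{s_1 p_8})$ precisely for $p_8\leq \tfrac{p_0}{s_1+2}$, since all three factors then demand moments of order at most $(s_1+2)p_8\leq p_0$; the same range covers $\mathcal{H}_2$. As $\tfrac{p_0}{s_1+2}\leq\tfrac{p_0+1}{2}$, Lemma~\ref{lem:general-SDE-form-mom-bou} delivers existence, uniqueness and the bound \eqref{lem:mom-bound-second-deri}, the power $|x_0|^{s_1 p_8}$ surviving the H\"older split exactly as stated.

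For the limit \eqref{lem:second-derivative} I set $D_{\epsilon_1}:=\epsilon_1^{-1}(\xi_i^{\epsilon_1}-\xi_i^0)$ and $R_{\epsilon_1}:=D_{\epsilon_1}-\zeta_{i,j}$. Subtracting the first-variation equation \eqref{eq:first-variation-equation} at $x_0+\epsilon_1 e_j$ from that at $x_0$, dividing by $\epsilon_1$, and subtracting \eqref{eq:second-variation-equation}, one checks that $R_{\epsilon_1}$ solves \eqref{eq:general-SDE-form} with zero initial datum and drift forcing
\[
\mathcal{H}_1=\big(Df(X^{\epsilon_1})-Df(X^0)\big)D_{\epsilon_1}
+\Big(\tfrac{Df(X^{\epsilon_1})-Df(X^0)}{\epsilon_1}\,\xi_i^0-D^2f(X^0)(\xi_i^0,\xi_j^0)\Big),
\]
together with the analogous $\mathcal{H}_2$ built from $g$. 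To apply Corollary~\ref{cor:general-SDE-form-con-beha} I must show these vanish in the sense of \eqref{eq:ass-limit-behavior-H-fun}. Writing $\chi:=X^{\epsilon_1}-X^0$, I rewrite the second bracket by Taylor's theorem as $\int_0^1[D^2f(X^0+\lambda\chi)-D^2f(X^0)]\tfrac{\chi}{\epsilon_1}\,\dd\lambda\,\xi_i^0+D^2f(X^0)\big(\tfrac{\chi}{\epsilon_1}-\xi_j^0\big)\xi_i^0$; the first piece tends to $0$ by continuity of $D^2 f$ and dominated convergence (the factor $\chi/\epsilon_1$ having moments uniformly bounded through \eqref{eq:increment-solution-esti}), and the second by \eqref{lem:first-derivative}, since $\chi/\epsilon_1\to\xi_j^0$ in $L^p$. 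The term $(Df(X^{\epsilon_1})-Df(X^0))D_{\epsilon_1}$ is handled by H\"older, sending $Df(X^{\epsilon_1})-Df(X^0)\to0$ by continuity while keeping the moments of $D_{\epsilon_1}$ under control.

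The main obstacle is exactly this last term, and it dictates the admissible range. Controlling it requires a uniform-in-$\epsilon_1$ moment bound on the difference quotient $D_{\epsilon_1}$ up to order $\tfrac{p_0}{s_1+2}$, which I would establish by a preliminary application of Lemma~\ref{lem:general-SDE-form-mom-bou} to the linear SDE satisfied by $D_{\epsilon_1}$ (with coefficient $Df(X^{\epsilon_1})$ and forcing $\epsilon_1^{-1}(Df(X^{\epsilon_1})-Df(X^0))\xi_i^0$, whose moments are uniformly bounded via Taylor's theorem and \eqref{eq:increment-solution-esti}). Balancing the H\"older exponents between $|Df(X^{\epsilon_1})-Df(X^0)|\leq C(1+|X^{\epsilon_1}|^{2r\vee 0}+|X^0|^{2r\vee 0})$ and these moments of $D_{\epsilon_1}$ produces the stated bound $p_9\leq \tfrac{p_0}{(2r\vee0)+s_1+2}$, which is why the hypothesis is strengthened to $p_0\geq 4r+4$: this guarantees the range is nonempty and in fact at least $2$, as required for the $p_9\vee 2$ condition on $\mathcal{H}_2$ in \eqref{eq:ass-limit-behavior-H-fun}. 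The remaining bookkeeping --- tracking the polynomial growth degrees through each H\"older split and confirming every integrability order stays below $p_0$ (or $p_0'$) --- is routine but is where all the care lies.
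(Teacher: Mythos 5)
Your proposal is correct and follows the same skeleton as the paper's proof: both assertions are reduced to the abstract linear equation \eqref{eq:general-SDE-form}, with Lemma~\ref{lem:general-SDE-form-mom-bou} applied to the forcings $\mathcal{H}_1=D^2f(X)(\xi_i,\xi_j)$, $\mathcal{H}_2=D^2g(X)(\xi_i,\xi_j)$ to get \eqref{lem:mom-bound-second-deri}, and Corollary~\ref{cor:general-SDE-form-con-beha} to get \eqref{lem:second-derivative}; the H\"older balancing at total integrability order $p_0$ that produces the two stated ranges is identical. The one place you diverge is the decomposition of the drift forcing for your $R_{\epsilon_1}$: by splitting off $(Df(X^{\epsilon_1})-Df(X^0))D_{\epsilon_1}$ you are forced to first establish a uniform-in-$\epsilon_1$ moment bound for the difference quotient $D_{\epsilon_1}$ of the first variation, via an extra preliminary application of Lemma~\ref{lem:general-SDE-form-mom-bou}. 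The paper avoids that auxiliary estimate by keeping the forcing in the form $\epsilon_1^{-1}\bigl(Df(X^{\epsilon_1})-Df(X^0)\bigr)\xi_i^{\epsilon_1}-D^2f(X^0)(\xi_i,\xi_j)$, Taylor-expanding $Df$ along the segment between $X^0$ and $X^{\epsilon_1}$, and splitting into three terms $I_1+I_2+I_3$ that vanish respectively by \eqref{lem:first-derivative} (convergence of $\chi_{x_0,\epsilon}/\epsilon_1$ to $\xi_j$), by Proposition~\ref{prop:con-first-deri} (continuity of $\xi_i$ in the initial datum, already proved), and by dominated convergence applied to $D^2f(\lambda(s))-D^2f(X^0(s))$; only the already-available bound \eqref{lem:mom-bound-first-deri} on $\xi_i^{\epsilon_1}$ is needed, never a bound on its difference quotient. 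Both groupings are algebraically equivalent and yield the same admissible range $p_9\leq p_0/\bigl(((2r-1)\vee 0)+(2r\vee 0)+2\bigr)$; yours costs one extra (routine) moment estimate, while the paper's leans on Proposition~\ref{prop:con-first-deri} instead.
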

\begin{proof}
We apply Lemma \ref{lem:general-SDE-form-mom-bou} to derive \eqref{lem:mom-bound-second-deri}, which requires us to verify that the corresponding $\mathcal{H}_1$ and $\mathcal{H}_2$ satisfy Assumption \ref{ass:general-fun-growth-con}. Here 
\begin{equation*}
 \mathcal{H}_1 = D^2 f(X(s))(\xi_i(s), \xi_j(s)), \quad
 \mathcal{H}_2 = D^2 g(X(s))(\xi_i(s), \xi_j(s)), \quad 
 Z(0) = 0.
\end{equation*}
It follows from the H\"older inequality, Assumption \ref{ass:coefficient_function_assumption}, \eqref{eq:esti_sol_SDE_wrt_initial} and \eqref{lem:mom-bound-first-deri}  that for $p_0 \geq (2r+1) \vee 2$
\begin{equation}
\label{eq:secone-deri-H1-bound}
\E \big[\int_0^t
|\mathcal{H}_1(\omega,s)|^{p_8} \, \dd s
\big]
\leq C (1+|x_0|^{((2r-1) \vee 0) p_8}), \quad
 1 \leq p_8 \leq \frac{p_0}{((2r-1) \vee 0) +2}.
\end{equation}
By similar arguments in the proof of \eqref{eq:secone-deri-H1-bound}, we obtain  
\[
\E \big[\int_0^t
|\mathcal{H}_2(\omega,s)|^{p_8 \vee 2} \, \dd s
\big]
\leq C (1+|x_0|^{((\rho-2) \vee 0) p_8}), \quad
 1 \leq p_8 \leq \frac{p_0}{((2r-1) \vee 0) +2}.\]
Next, based on the Corollary \ref{cor:general-SDE-form-con-beha}, we prove
\eqref{lem:second-derivative}, where we set  
\begin{eqnarray*}
\mathcal{H}_1 &=& 
\frac{ 
\big(Df(X^{0,x_0+\epsilon_1 e_j}(s)) 
 -D f(X^{0,x_0}(s))\big)
 \xi_i^{0,x_0+\epsilon_1 e_j}(s)}{\epsilon_1} - D^2f(X^{0,x_0}(s))(\xi_i(s), \xi_j(s)), \notag \\
 \mathcal{H}_2 &=& 
\frac{ 
\big(Dg(X^{0,x_0+\epsilon_1 e_j}(s)) 
 -Dg(X^{0,x_0}(s))\big)
 \xi_i^{0,x_0+\epsilon_1 e_j}(s)}{\epsilon_1} - D^2g(X^{0,x_0}(s))(\xi_i(s), \xi_j(s)), \quad
 \mathcal{S}_{\epsilon}(0)=0.
\end{eqnarray*}
Using the Taylor expansion and the elementary inequality, we have
  \begin{eqnarray*} 
  \E\big[\int_{0}^{t}
  |\mathcal{H}_1|^{p_9} \, \dd s
  \big] & \leq & 
  C \E \big[ \int_{0}^{t}\big| 
  \int_{0}^{1} D^2 f ( \lambda (s)) \, \dd \lambda 
  \big( \frac{
   X^{0,x_0+\epsilon_1 e_j}(s) - X^{0,x_0}(s) }{\epsilon_1} -
    \xi_i^{0,x_0+\epsilon_1 e_j}(s) \big) 
    \xi_i^{0,x_0+\epsilon_1 e_j}(s) \big|^{p_9}  \, \dd s 
   \big]  \notag \\
  & & +
C \E \big[ \int_{0}^{t}\big| 
   \int_{0}^{1} D^2 f ( \lambda (s)) \, \dd \lambda 
      \big(\xi_i^{0,x_0+\epsilon_1 e_j}(s)-\xi_i^{0,x_0}(s)\big) 
    \xi_i^{0,x_0}(s) \big|^{p_9}  \, \dd s 
   \big]  \notag \\
 & & +
C \E \big[ \int_{0}^{t}\big| \big(
    D^2 f (\lambda(s)) - D^2 f(X^{0,x_0}(s)) \big)
      \big(\xi_j^{0,x_0}(s), \xi_i^{0,x_0}(s)\big) 
     \big|^{p_9}  \, \dd s 
   \big]   \notag \\
   & =: & I_1 + I_2 + I_3,
  \end{eqnarray*}
where $\lambda(s)= \lambda X^{0,x_0}(s)+ (1-\lambda)X^{0,x_0+\epsilon_1 e_j}(s) $.
By the H\"older inequality, Assumption \ref{ass:coefficient_function_assumption}, Corollary \ref{cor:well-posedness-deri-first} and the dominated convergence theorem, we have that for $p_0 \geq 4r+4$
\begin{equation}
\label{eq:lim-beha-I1}
    \lim_{\epsilon_1 \rightarrow 0}
    I_1 = 0,
  \quad 1 \leq p_9 \leq \frac{p_0}{((2r-1)\vee 0)+(2r\vee 0 )+2},
  \quad \text{for all \,} t \in [0, T].
\end{equation}
Similar arguments lead to $\lim_{\epsilon_1 \rightarrow 0}
    I_2 = 0$ and $\lim_{\epsilon_1 \rightarrow 0}
    I_3 = 0$, which indicates that
$ \lim_{\epsilon_1 \rightarrow 0} \E\big[\int_{0}^{t}
  |\mathcal{H}_1|^{p_9} \, \dd s
  \big] =0 $.
Similarly, we have 
\begin{equation*}
    \E\big[\int_{0}^{t}
  |\mathcal{H}_2|^{p_9 \vee 2} \, \dd s \big] = 0,
  \quad 1 \leq p_9 \leq \frac{p_0}{((\rho-2)\vee 0)+(2r\vee 0 )+2},
  \quad \text{for all \,} t \in [0, T],
\end{equation*}
which, together with Corollary \ref{cor:well-posedness-deri-first}, completes the proof.
\end{proof}
\textit{Proof of Lemma \ref{lem:deri-mom-bound}.}
The proof is conducted by induction with the $j = 1$ case in Corollary \ref{cor:well-posedness-deri-first} as the base step, and $j = 2$ case in Corollary \ref{cor:well-posedness-deri-second} as the first induction step. We omit the details here.
\qed

\begin{rem}
As discussed in Remark \ref{rem:assumption-difference},  we prove Lemma \ref{lem:deri-mom-bound} under the relaxed condition compared to \cite{Cerrai2001second}.
\end{rem}
\section{Proof of the approximation theorem}
\label{append-proof-weak-conver-theorem}
To carry out the weak error analysis,
we introduce  the function $u \colon [0, T] \times \R^d \rightarrow \R$
  defined by $
u(t,x)
= \E  \big[ \varphi \big(X(t,x;T)\big) \big]$.
We first prove a useful result.
\begin{lem}[One-step error]
\label{lem:equivent_condition}
Under the same condition of Theorem \ref{thm:fundamental_weak_convergence},
function $u$ 
fulfills 
\begin{equation}
\label{eq:general-one-step-estimate-bound}
 \sup_{ t \in [0,T - h]} \big | \E \big[ u(t + h , X(t, x; t+h) ) \big]
- 
\E \big[ u ( t + h , Y(t, x; t+h) )  \big]\big| 
\leq 
C ( 1 + | x |^{\beta \kappa + \varkappa} ) 
h^{q+1},
\end{equation}
where $\beta $, $\varkappa$ come from assumption \ref{ass:ii} and \ref{ass:iii} in Theorem \ref{thm:fundamental_weak_convergence}, $h > 0$ and $ t + h \leq T$.
\end{lem}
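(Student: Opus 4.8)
The plan is to perform a Taylor expansion of the map $y\mapsto u(t+h,y)$ about the point $x$ and to exploit both the matching of the low-order moments of the exact and numerical one-step increments and the polynomial growth of the spatial derivatives of $u$. Writing $\delta_{X,x}=X(t,x;t+h)-x$ and $\delta_{Y,x}=Y(t,x;t+h)-x$, I would expand
\[
u(t+h,x+\delta)=\sum_{1\le\abs{\alpha}_1\le 2q+1}\frac{1}{\alpha!}D^\alpha u(t+h,x)\,\delta^\alpha+\mathcal R(x,\delta)
\]
for $\delta\in\{\delta_{X,x},\delta_{Y,x}\}$, where $\mathcal R$ collects the integral remainder carried by the derivatives of order $2q+2$. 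Taking expectations and subtracting, the zeroth-order terms $u(t+h,x)$ cancel identically, so the left-hand side of \eqref{eq:general-one-step-estimate-bound} reduces to a finite sum over $1\le\abs{\alpha}_1\le 2q+1$ of $\frac{1}{\alpha!}D^\alpha u(t+h,x)\big(\E[(\delta_{X,x})^\alpha]-\E[(\delta_{Y,x})^\alpha]\big)$ together with the remainder difference $\E[\mathcal R(x,\delta_{X,x})]-\E[\mathcal R(x,\delta_{Y,x})]$.

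The first preliminary step is to establish the growth bound $\abs{D^\alpha u(t+h,x)}\le C(1+\abs{x}^{\kappa})$ for all $\abs{\alpha}_1\le 2q+2$. I would obtain this by differentiating $u(t+h,x)=\E[\varphi(X(t+h,x;T))]$ under the expectation, expanding $D^\alpha u$ by the Fa\`{a} di Bruno formula into a finite sum of terms of the form $\E[D^\beta\varphi(X)\prod_i D^{\gamma_i}X]$, and controlling each factor by the generalized H\"older inequality: the factor $D^\beta\varphi(X)$ is handled by Assumption \ref{assm:test-functions} together with the moment bound \eqref{eq:esti_sol_SDE_wrt_initial}, contributing the growth $1+\abs{x}^{\kappa}$, while the derivative factors $D^{\gamma_i}X$ have $x$-uniformly bounded moments of every admissible order by Lemma \ref{lem:deri-mom-bound}. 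This is exactly where the hypothesis $\mathds P\ge 4q+4$ enters, since bounding a product of up to $2q+2$ derivative factors in $L^2$ requires each factor to live in $L^{4q+4}$.

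With these two ingredients I would conclude as follows. For the main sum, each summand is bounded, using the derivative estimate and the local accuracy condition \eqref{eq:error_of_general_one_step_approximation}, by $C(1+\abs{x}^{\kappa})(1+\abs{x}^{\varkappa})h^{q+1}\le C(1+\abs{x}^{\kappa+\varkappa})h^{q+1}$. For the remainder terms I would apply the Cauchy--Schwarz inequality to split $\E[\mathcal R(x,\delta)]$ into the product of $\norm{\prod_{j=1}^{2q+2}(\delta)^{i_j}}_{L^2}$ and $\sup_{\theta\in[0,1]}\norm{1+\abs{(1-\theta)x+\theta(x+\delta)}^{\kappa}}_{L^2}$. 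The first factor is $O((1+\abs{x}^{\varkappa})h^{q+1})$ by \eqref{eq:estimate_general_one_step_ref_solution} for $\delta_{X,x}$ and by \eqref{eq:estimate_general_one_step_approximate_solution} for $\delta_{Y,x}$. For the second factor, convexity keeps the intermediate point a convex combination of $x$ and the one-step value; for the exact flow \eqref{eq:esti_sol_SDE_wrt_initial} bounds it by $C(1+\abs{x}^{\kappa})$, whereas for the numerical value the moment bound \eqref{eq:ass_approximation_moment_bound}, applied with a single step ($Y_0=x$), yields $C(1+\abs{x}^{\beta\kappa})$. Collecting the worst exponent $\beta\kappa+\varkappa$ (recall $\beta\ge1$) gives \eqref{eq:general-one-step-estimate-bound}, uniformly in $t\in[0,T-h]$ because the autonomy of \eqref{eq:Problem_SDE} renders every bound above dependent on $x$ and $h$ only.

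The step I expect to be the main obstacle is the control of the remainder on the numerical side: unlike the exact flow, the intermediate point $(1-\theta)x+\theta\,Y(t,x;t+h)$ is accessible only through the scheme-dependent one-step moment bound, and it is precisely this bound that forces the exponent to jump from the benign $\kappa$ to $\beta\kappa$. Keeping careful track of which exponents are admissible---so that $2\kappa$ and $\beta\kappa+\varkappa$ both stay below the available moment orders of the exact and numerical solutions---is the delicate bookkeeping that must be reconciled with the hypotheses of Theorem \ref{thm:fundamental_weak_convergence}.
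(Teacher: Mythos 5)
Your proposal is correct and follows essentially the same route as the paper's own proof: a Taylor expansion of $u(t+h,\cdot)$ about $x$ up to order $2q+1$ with an integral remainder of order $2q+2$, the polynomial bound $|D^\alpha u(t+h,x)|\le C(1+|x|^{\kappa})$ obtained from Lemma \ref{lem:deri-mom-bound} and the chain rule, condition \eqref{eq:error_of_general_one_step_approximation} for the matched low-order terms, and a Cauchy--Schwarz splitting of the remainder using \eqref{eq:estimate_general_one_step_ref_solution}--\eqref{eq:estimate_general_one_step_approximate_solution} together with \eqref{eq:esti_sol_SDE_wrt_initial} on the exact side and \eqref{eq:ass_approximation_moment_bound} on the numerical side, which is exactly where the exponent $\beta\kappa$ arises. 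The bookkeeping of admissible moment orders ($2\kappa$ and $\mathds{P}=4q+4$) that you flag as the delicate point is precisely what the paper records at the end of its remainder estimate.
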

\begin{proof}
We recall that $\varphi \in \mathbb{G}^{2q+2}$. By the
results in Corollary \ref{cor:well-posedness-deri-first} and Lemma \ref{lem:deri-mom-bound} together with chain rule we could show that 
$u (t, x) $ is $ 2q + 2 $ times differentiable with respect to $x$ and
\begin{equation}
\label{eq:kth-partial-u}
\big|
\tfrac{\partial ^{k} u ( t + h, x)}{\partial x_{i_{1}}
   \ldots \partial x_{i_{k}}}
\big|
\leq
C ( 1 + | x |^{ \kappa } ),
\quad
k = 1,2,\cdots, 2q+2.
\end{equation}
By the Taylor expansion, we get 
\begin{eqnarray}
\label{eq:u-Taylor-expansion}
 && \quad
     \E \big[ u(t + h , X(t, x; t+h) ) \big]
- 
\E \big[ u ( t + h , Y(t, x; t+h) )  \big] \notag \\
 & = &
    \sum^{2q+1}_{k=1}
    \sum^{d}_{i_{1},\ldots,i_{k} = 1}
   \frac{1}{k!} 
   \frac{\partial ^{k} u ( t + h, x)}{\partial x_{i_{1}}
   \ldots \partial x_{i_{k}}}
   \E \Big[ \prod^{k}_{j=1} (\delta_{X,x})^{i_{j}}
    - \prod^{k}_{j=1} (\delta_{Y,x})^{i_{j}} \Big ]
    + \E [\mathcal{R}_{2q+2}],
\end{eqnarray}
where we denote
$
\big( \delta_{X,x} \big)^{\alpha}
   := \prod_{j=1}^{2q+2}  (\delta_{X,x})^{i_j},\,
 \big( \delta_{Y,x} \big)^{\alpha}
 := \prod_{j=1}^{2q+2}  (\delta_{Y,x})^{i_j},
 \,
 \alpha = ( i_1, i_2, ..., i_{2q+2} )
$
and
\begin{eqnarray*}
  \mathcal{R}_{2q+2} &:= & \sum_{|\alpha|=2p+2}
  \frac{|\alpha|}{\alpha!}
  \bigg( 
  \int_0^1 (1-s)^{|\alpha|-1}D^{\alpha}
   u \big( t+ h, x + s \delta_{X,x} \big) \dd s
   \, ( \delta_{X,x} )^{\alpha}  
   \notag \\
& &\qquad
     - 
  \int_0^1 (1-s)^{|\alpha|-1}D^{\alpha}
   u \big( t +h, x + s \delta_{Y,x} \big) \dd s
    \, ( \delta_{Y,x} )^{\alpha
    } 
    \bigg).
\end{eqnarray*}
Combining the assumption \ref{ass:ii} with \eqref{eq:kth-partial-u} yields
\begin{equation} 
\label{eq:Taylor-expansion-estimate1}
\sum^{2q+1}_{k=1}\sum^{d}_{i_{1},\ldots,i_{k} = 1}
\frac{1}{k!} \Big | \frac{\partial ^{k} u ( t+h, x )}{\partial x_{i_{1}}
	\ldots \partial x_{i_{k}}} \Big|
	\cdot
\Big| \E \Big[ \prod^{k}_{j=1} (\delta_{X,x})^{i_{j}}
- \prod^{k}_{j=1} (\delta_{Y,x})^{i_{j}} \Big ] \Big| 
\leq
C ( 1 + | x |^{\kappa+\varkappa} )
h^{p+1} .
\end{equation}
Using the H\"older inequality,
\eqref{eq:esti_sol_SDE_wrt_initial}, assumption \ref{ass:iii}
and
\eqref{eq:kth-partial-u}, one can derive 
\begin{eqnarray}
 \label{eq:Taylor-expansion-estimate2}
\big | \E [ \mathcal{R}_{2q+2}] \big|
  & \leq &
  \sum_{|\alpha|=2q+2} 
  C  \int_0^1 \| D^{\alpha}
   u \big( t +h,  (1-s)x + sX(t,x;t+h)\big) \|_{L^2 (\Omega; \R)} \dd s
   \| ( \delta_{X,x} )^{\alpha}
   \|_{L^2 (\Omega; \R)}  \notag \\
&& \quad +
  \sum_{|\alpha|=2q+2} 
  C  \int_0^1 \| D^{\alpha}
   u \big( t+h, (1-s)x + sY(t,x;t+h)\big) \|_{L^2 (\Omega; \R)} 
   \dd s
    \| ( \delta_{Y,x} )^{\alpha} 
   \|_{L^2 (\Omega; \R)} \notag \\
& \leq &
C ( 1 + | x |^{ \beta  \kappa+\varkappa} ) h^{p+1},
\end{eqnarray}
where we use the assumption
	$ p_1 = 2 \kappa $, $ p = 2 \kappa$, 
	and $\mathds{P} = 4 q + 4$.
Plugging \eqref{eq:Taylor-expansion-estimate1} and 
\eqref{eq:Taylor-expansion-estimate2} into \eqref{eq:u-Taylor-expansion} , we get
\begin{equation*}
\big | \E \big[ u(t + h , X(t, x; t+h) ) \big]
- 
\E \big[ u ( t + h , Y(t, x; t+h) )  \big]\big| 
\leq 
C ( 1 + | x |^{\beta \kappa + \varkappa} ) 
h^{p+1}.
\end{equation*}
The proof of the lemma is thus complete.
\end{proof}
\textit{Proof of Theorem \ref{thm:fundamental_weak_convergence}.}
The proof follows the main ideas from \cite{Milstein1985Weak,Milstein2004stochastic,wang2021weak}. Here, we show the key steps of the proof. Fix $\varphi \in \mathbb{G}^{2q+2}$ and define $u(t,x) =\E  \big[ \varphi \big(X(t,x;T)\big) \big]$. Then, we find that
 \begin{equation*}
\E \big [ \varphi \big (X(t_0,X_0;T)
  	    \big ) \big ]
  	     -\E \big[
  	      \varphi \big (
     	Y(t_0,Y_0; t_N)
  	   \big )\big ]
= \E [ u ( t_0, X_0) ] - \E [u (t_N, Y_N )],
\end{equation*}
and 
\[\E \big[ u(t_0,X_0) \big]= 
		\E \big[ u(t_1 , X( t_{1} ) ) \big]
		- \E \big[ u(t_1 , Y_1)  \big]
		+ \E \big[ u( t_{2}, X(t_1,Y_1; t_2) )  \big]. \]
Continuing this process and subtracting $\E \big[ u(t_N,Y_N) \big] $, we get
\begin{equation}
  \label{eq:error_test_function}
	\E \big[ u(t_0,X_0) \big] - \E \big[ u(t_N,Y_N) \big] = 
		\sum_{i=0}^{N-1}
		\Big( \E \big[ u(t_{i+1} , X( t_{i}, Y_{i}; t_{i+1} ) ) \big]
		- \E \big[ u(t_{i+1} , Y( t_{i}, Y_{i}; t_{i+1} ) )  
		\big] \Big),
\end{equation}
and thus 
\begin{eqnarray}
\label{eq:expan-global-weak-error}
 & & \big | \E \big [
  	 \varphi \big (
    	X(t_0,X_0;T)
  	    \big ) \big ]
  	     -\E \big[
  	      \varphi \big (
     	Y(t_0,Y_0; t_N)
  	   \big )\big ] \big |  \notag \\
  & \leq &   
\sum_{i=0}^{N-1}
		\Big| \E \big[ u(t_{i+1} , X( t_{i}, Y_{i}; t_{i+1} ) ) \big]
		- \E \big[ u(t_{i+1} , Y( t_{i}, Y_{i}; t_{i+1} ) )  \big] \Big|
 \notag \\
& = & 
\sum_{i=0}^{N-1}
		\Big| \E \Big[  \E \big( u(t_{i+1} , X( t_{i}, Y_{i}; t_{i+1} ) ) - u(t_{i+1} , Y( t_{i}, Y_{i}; t_{i+1} ) ) | \sigma(Y_i) \big) \Big] \Big|.
\end{eqnarray}
Finally, combining a conditional version of \eqref{eq:general-one-step-estimate-bound} with \eqref{eq:expan-global-weak-error} yields
\begin{equation}
 \big | \E \big [
           \varphi \big ( X(t_0,X_0;T) \big ) 
                      \big ]
         - \E \big[
              \varphi \big ( Y(t_0,Y_0;t_N) \big )
                      \big ] \big |
                 \leq 
     C \big(1+|X_0|^{\beta (\beta \kappa + \varkappa)}\big)
             T h^{q},
\end{equation}
which validates the desired assertion \eqref{thm:convergence_order}.
\qed
\section{Proofs of moment bounds of modified Euler schemes}
\label{append-modified-moment-bound}
\textit{Proof of Lemma \ref{lem:Euler-scheme-moment-bound}.}
In the proof, we shall use the letter $ C $ to denote various constants independent of $ h $ and $ n $.
Let $\mathcal{R} > 0$ be sufficiently large and define a sequence of decreasing subevents
\begin{equation*}
\Omega_{\mathcal{R}, n}
  := 
  \left\{
  \omega \in \Omega:
 \sup_{0 \leq i \leq n}
  \left|Y_{i}(\omega)\right| \leq \mathcal{R} \right\}, 
  \quad  n=0,1, \ldots, N, \quad
   N \in \mathbb{N},
\end{equation*}
and their compliments $ \Omega_{ \mathcal{R}, n}^{c} $. At first, we show that the boundedness of high-order moments is valid within a family of appropriate subevents.
For integer $ \bar{p} \geq 1 $, We have 
\begin{eqnarray}
 \label{eq:decom-balanced-euler-sub}
	 \E \big[ \mathds{1}_{\Omega_{ \mathcal{R}, n+1}} |Y_{n+1}|^{\bar{p}}  \big] 
  & \leq &
   \E \big[ \mathds{1}_{\Omega_{ \mathcal{R}, n}} |Y_{n}|^{\bar{p}}  \big]
  +
   \E \big[ \mathds{1}_{\Omega_{ \mathcal{R}, n}} |Y_{n}|^{\bar{p}-2} 
    \big( \bar{p} \langle Y_n, Y_{n+1}-Y_n \rangle
      +\frac{\bar{p} (\bar{p}-1)}{2} | Y_{n+1}-Y_n |^2
    \big) \big]  \notag \\
 && + C \sum_{l=3}^{\bar{p}}
   \E \big[ \mathds{1}_{\Omega_{ \mathcal{R}, n}} 
   |Y_{n}|^{\bar{p}-l} 
   |Y_{n+1} - Y_n |^{l}  \big].
\end{eqnarray}
Consider the second term in the right-hand side of \eqref{eq:decom-balanced-euler-sub}:
\begin{eqnarray*}
    I_1 
    & = &
    \E \big[ \mathds{1}_{\Omega_{ \mathcal{R}, n}} |Y_{n}|^{\bar{p}-2} 
    \big(\bar{p} \langle Y_n, Y_{n+1}-Y_n \rangle
      + \frac{\bar{p} (\bar{p}-1)}{2} | Y_{n+1}-Y_n |^2
    \big) \big]  \notag \\
   & = &
  \bar{p} \E \big[ \mathds{1}_{\Omega_{ \mathcal{R}, n}} |Y_{n}|^{\bar{p}-2} 
      \langle Y_n, \mf{ Y_n } h-f(Y_n)h \rangle
      \big] 
  + \frac{\bar{p} (\bar{p}-1)}{2} \E \big[ \mathds{1}_{\Omega_{ \mathcal{R}, n}} 
   |Y_{n}|^{\bar{p}-2} 
   |\mf{ Y_n }h|^{2}  \big]   \notag \\
  && +
    \bar{p}
   \E \big[ \mathds{1}_{\Omega_{ \mathcal{R}, n}} |Y_{n}|^{\bar{p}-2} 
     \big( \langle Y_n, f(Y_n)h \rangle 
  + \frac{(\bar{p}-1)}{2} 
   \big|\sum_{r=1}^{m} 
      \mg{ Y_n }   \Delta W_r(n)
   \big|^{2} \big)  \big]. 
\end{eqnarray*}
Using the Schwarz inequality, \ref{ass:tame-h1}, \ref{ass:tame-h2} in Assumption \ref{ass:tame_function_assumption}, \eqref{eq:co_mono_condi} and \eqref{eq:f-growth} yields
\begin{eqnarray}
  \label{eq:the-result-I1}
    I_1    
  & \leq &
   C h + Ch
  \E \big[ \mathds{1}_{\Omega_{ \mathcal{R}(h), n}} |Y_{n}|^{\bar{p}} \big]  \notag \\
  && + 
  \bar{p} \E \big[ \mathds{1}_{\Omega_{ \mathcal{R}, n}} |Y_{n}|^{2 \bar{p}-1} 
      \big| \mf{ Y_n }-f(Y_n)\big| h \big] 
  + \frac{\bar{p} (\bar{p}-1)}{2} \E \big[ \mathds{1}_{\Omega_{ \mathcal{R}, n}} 
   |Y_{n}|^{ 2\bar{p}-2} 
   |\mf{ Y_n }h|^{2}  \big]   \notag \\
   & \leq & 
  C h 
 + Ch
  \E \big[ \mathds{1}_{\Omega_{ \mathcal{R}(h), n}} |Y_{n}|^{\bar{p}} \big] 
 + Ch^2 \E \big[ \mathds{1}_{\Omega_{ \mathcal{R}(h), n}} |Y_{n}|^{4r+\bar{p}} \big]  \notag \\
  && + 
  Ch^{\tau+1} \E \big[ \mathds{1}_{\Omega_{ \mathcal{R}(h), n}} |Y_{n}|^{(2r+1)l_1+\bar{p}-1} \big],
\end{eqnarray}
where $\bar{p} \leq p_0' - \varepsilon$.
Now consider the last term in \eqref{eq:decom-balanced-euler-sub}:
\begin{eqnarray}
  \label{eq:esti-I2}
   I_2 
   & = & 
   C \sum_{l=3}^{\bar{p}}
   \E \big[ \mathds{1}_{\Omega_{ \mathcal{R}, n}} 
   |Y_{n}|^{\bar{p}-l} 
   |Y_{n+1} - Y_n |^{l}  \big]   \notag \\
   & \leq &  
   C \sum_{l=3}^{\bar{p}}
   \E \big[ \mathds{1}_{\Omega_{ \mathcal{R}, n}} 
   |Y_{n}|^{\bar{p}-l} 
   \big( h^l |\mf{ Y_n }|^l + h^{\frac{l}{2}}
    \sum_{r=1}^{m}| \mg{ Y_n } |^l
   \big) \big]    \notag \\
& \leq & 
  Ch
   + 
   C \sum_{l=3}^{\bar{p}} h^l
   \E \big[ \mathds{1}_{\Omega_{ \mathcal{R}, n}} |Y_{n}|^{2rl+\bar{p}} \big]  
   + 
   C \sum_{l=3}^{\bar{p}} h^{\frac{l}{2}}
   \E \big[ \mathds{1}_{\Omega_{ \mathcal{R}, n}} |Y_{n}|^{rl+\bar{p}} \big].
\end{eqnarray}
Combining \eqref{eq:the-result-I1} and \eqref{eq:esti-I2}, we obtain
\begin{eqnarray}
\label{eq:result-Yn-sub}
   \E \big[ \mathds{1}_{\Omega_{ \mathcal{R}, n}} |Y_{n+1}|^{\bar{p}}  \big] 
  & \leq &
  Ch
 +
  ( 1 + Ch )  
  \E \big[ \mathds{1}_{\Omega_{ \mathcal{R}, n}} |Y_{n}|^{\bar{p}} \big] 
  +  Ch^2 \E \big[ \mathds{1}_{\Omega_{ \mathcal{R}, n}} |Y_{n}|^{4r+\bar{p}} \big]  \notag \\
  && + 
  Ch^{\tau+1} \E \big[ \mathds{1}_{\Omega_{ \mathcal{R}, n}} |Y_{n}|^{(2r+1) l_1+\bar{p}-1} \big]  \notag  \\
  && +
  C \sum_{l=3}^{\bar{p}} h^l
   \E \big[ \mathds{1}_{\Omega_{ \mathcal{R}, n}} |Y_{n}|^{2rl+\bar{p}} \big]  
   + 
   C \sum_{l=3}^{\bar{p}} h^{\frac{l}{2}}
   \E \big[ \mathds{1}_{\Omega_{ \mathcal{R}, n}} |Y_{n}|^{rl+\bar{p}} \big].
\end{eqnarray}
Choosing $\mathcal{R} = \mathcal{R}(h)=h^{-1/\mathds{G}_1}$ with 
$ \mathds{G}_1 = 6r \vee \frac{(2r+1)l_1-1}{\tau} $,
one can immediately show that for all $ l=3, \ldots, \bar{p} $,
 \begin{align*}
    \mathds{1}_{\Omega_{ \mathcal{R}, n}}
    \big| Y_n \big|^{(2r+1)l_1+\bar{p}-1} 
    h^{\tau}  
  & = 
  \mathds{1}_{\Omega_{ \mathcal{R}, n}}
    \big| Y_n \big|^{\bar{p}} 
    \big( 
    \mathds{1}_{\Omega_{ \mathcal{R}, n}}
     \big| Y_n \big|^{\frac{(2r+1)l_1-1}{\tau}} 
     h \big)^{\tau} 
    \leq 
    C 
    \mathds{1}_{\Omega_{ \mathcal{R}, n}}
     \big| Y_n \big|^{\bar{p}}, \\
\mathds{1}_{\Omega_{ \mathcal{R}, n}}
    \big| Y_n \big|^{4r+\bar{p}} h
  & = 
  \mathds{1}_{\Omega_{ \mathcal{R}, n}}
    \big| Y_n \big|^{\bar{p}} 
    \big( 
    \mathds{1}_{\Omega_{ \mathcal{R}, n}}
     \big| Y_n \big|^{4r} 
     h \big)
   \leq 
    C 
    \mathds{1}_{\Omega_{ \mathcal{R}, n}}
     \big| Y_n \big|^{\bar{p}}, \\ 
\mathds{1}_{\Omega_{ \mathcal{R}, n}}
    \big| Y_n \big|^{2rl+\bar{p}}
    h^{l-1}
  & = 
  \mathds{1}_{\Omega_{ \mathcal{R}, n}}
    \big| Y_n \big|^{\bar{p}} 
    \big( 
    \mathds{1}_{\Omega_{ \mathcal{R}, n}}
     \big| Y_n \big|^{\frac{2rl}{l-1}} 
     h \big)^{l-1} 
   \leq 
    C 
    \mathds{1}_{\Omega_{ \mathcal{R}, n}}
     \big| Y_n \big|^{\bar{p}}, \\ 
   \mathds{1}_{\Omega_{ \mathcal{R}, n}}
    \big| Y_n \big|^{rl+\bar{p}}
    h^{\frac{l-2}{2}} 
  & = 
  \mathds{1}_{\Omega_{ \mathcal{R}, n}}
    \big| Y_n \big|^{\bar{p}} 
    \big( 
    \mathds{1}_{\Omega_{ \mathcal{R}, n}}
     \big| Y_n \big|^{\frac{2rl}{l-2}} 
     h \big)^{\frac{l-2}{2}} 
   \leq 
    C 
    \mathds{1}_{\Omega_{ \mathcal{R}, n}}
     \big| Y_n \big|^{\bar{p}},
 \end{align*}
where the constants $ C $ are independent of the step size $ h $. Thus we have for \eqref{eq:result-Yn-sub}
\begin{equation*}
\E \big[ \mathds{1}_{\Omega_{ \mathcal{R}, n+1}} |Y_{n+1}|^{\bar{p}}  \big] 
  \leq 
  \E \big[ \mathds{1}_{\Omega_{ \mathcal{R}, n}} |Y_{n+1}|^{\bar{p}}  \big]   
   \leq 
  Ch
 +
  ( 1 + Ch )  
  \E \big[ \mathds{1}_{\Omega_{ \mathcal{R}, n}} |Y_{n}|^{\bar{p}} \big]. 
\end{equation*}
The discrete Gronwall inequality \cite[Lemma 1.6]{Zhang2017preserving} shows that 
\begin{equation}
  \label{eq:balanced-euler-result-moment-bound-sub}
     \E \big[ \mathds{1}_{\Omega_{ \mathcal{R}, n}} |Y_n|^{\bar{p}}  \big] 
   \leq 
 \exp(Cnh) \big( 1 + | Y_{0} | ^{\bar{p}} \big)
  \leq
     C 
  \big( 1 + | Y_{0} | ^{\bar{p}} 
  \big). 
\end{equation}
It remains to estimate $ \E \big[ \mathds{1}_{\Omega_{ \mathcal{R}, n}^{c}} |Y_n|^{p}  \big] $.
It follows from \eqref{eq:Euler-scheme}, \ref{ass:tame-h1} in Assumption \ref{ass:tame_function_assumption} that
\begin{eqnarray}
  \label{eq:decom_inequality_Yn}
    | Y_{n+1} | 
 &  \leq &
| Y_n | + Ch^{1-\gamma_1} + \sum_{r=1}^{m} 
  C h^{-\gamma_2}
\big|W_r(t_{n+1})-W_r(t_n)\big|
  \notag \\
 & \leq &
   | Y_0 | 
 + (n+1)Ch^{1-\gamma_1} 
 +  
  \sum_{k=0}^{n}  \sum_{r=1}^{m} 
 C h^{-\gamma_2}
 \big|W_r(t_{k+1})-W_r(t_k)\big|.
\end{eqnarray}
Note that
 $\mathds{1}_{\Omega_{ \mathcal{R}, n}^{c}}
   = 
  \mathds{1}_{\Omega_{ \mathcal{R}, n-1}^{c}}
 + 
 \mathds{1}_{\Omega_{ \mathcal{R}, n-1}}
 \mathds{1}_{|Y_n| > \mathcal{R}} 
 = 
  \sum_{i=0}^{n}
  \mathds{1}_{\Omega_{ \mathcal{R}, i-1}}
 \mathds{1}_{|Y_i| > \mathcal{R}},
$
where we set $ \mathds{1}_{\Omega_{ \mathcal{R}(h), -1} }= 1 $. This together with the H\"older inequality with 
$ \frac{1}{p'} + \frac{1}{q'} =1 $ 
for 
$ q' = \frac{\bar{p}}{(p\gamma_1+1)\mathds{G}_1} \vee \frac{\bar{p}}{(1+\frac12 p+ p\gamma_2)\mathds{G}_1}  > 1 $,
due to
$ p \leq \frac{ \bar{p}-  \mathds{G}_1}{1+\gamma_1 \mathds{G}_1} \wedge \frac{ \bar{p}-\mathds{G}_1}{1+(\frac12+\gamma_2)\mathds{G}_1}$, and the Chebyshev inequality give
\begin{eqnarray}
  \label{eq:resu_decom_Yn_subevent}
   \E 
      \big[ \mathds{1}_{\Omega_{ \mathcal{R}(h), n}^{c}}
   |Y_{n}|^{p}\big]  
  & = &
   \sum_{i=0}^{n}
   \E 
    \big[ |Y_{n}|^{p}
  \mathds{1}_{\Omega_{ \mathcal{R}, i-1}}
  \mathds{1}_{|Y_i|>\mathcal{R}}
   \big]    \notag \\
 & \leq &
  \sum_{i=0}^{n} \big(
   \E \big[ |Y_{n}|^{p p'} \big]
   \big)^{\frac{1}{p'}}
   \big(  \E \big[
   \mathds{1}_{\Omega_{ \mathcal{R}, i-1}}
  \mathds{1}_{|Y_i|>\mathcal{R}}
   \big] \big)^{\frac{1}{ q'}}   \notag \\
  & = &
  \E \big[ |Y_{n}|^{p p'} \big]
   \big)^{\frac{1}{p'}}
   \sum_{i=0}^{n}
   \big(  \P \big(
   \mathds{1}_{\Omega_{ \mathcal{R}, i-1}}
    |Y_i|>\mathcal{R}
   \big) \big)^{\frac{1}{ q'}}   \notag \\
 & \leq &
  \big( \E \big[ |Y_{n}|^{p p'} \big]
   \big)^{\frac{1}{p'}}
   \sum_{i=0}^{n}
   \frac{\big(  \E \big[
   \mathds{1}_{\Omega_{ \mathcal{R}, i-1}}
    |Y_i|^{\bar{p}}
   \big] \big)^{\frac{1}{ q'}}}
   {\mathcal{R}^{\bar{p}/q'}}.
\end{eqnarray}
Since $p \leq \frac{ \bar{p}-\mathds{G}_1}{1+\gamma_1 \mathds{G}_1} \wedge \frac{ \bar{p}-\mathds{G}_1}{1+(\frac12+\gamma_2)\mathds{G}_1}$ implies $ p p' \leq \bar{p} $, using the H\"older inequality, \eqref{eq:decom_inequality_Yn} and the elementary inequality implies
\begin{eqnarray} \label{eq:first_part_decom_Yn_subevent}
    \big( \E \big[ |Y_{n}|^{p p'} \big]
   \big)^{\frac{1}{p'}} 
  & \leq & 
   \big( \E \big[ |Y_{n}|^{\bar{p}} \big]
   \big)^{\frac{p}{\bar{p}}} 
  \leq 
  C \Big( |Y_0|^{\bar{p}} 
  + n^{\bar{p}}  h^{\bar{p}(1-\gamma_1)}
  + h^{-\bar{p}\gamma_2}
  \E \big[ \big| \sum_{k=0}^{n-1} |
  W_r(t_{k+1}) - W_r(t_k)|
  \big|^{\bar{p}} \big]  \Big)^{\frac{p}{\bar{p}}}\notag \\
 & \leq & 
   C (h^{-p \gamma_1} + h^{-\frac{p}{2}-p \gamma_2} ) 
 + C \big( 1 + |X_0|^{\bar{p}} \big)
   ^{\frac{p}{\bar{p}}}.
\end{eqnarray}
Inserting \eqref{eq:first_part_decom_Yn_subevent} into \eqref{eq:resu_decom_Yn_subevent} and exploiting the H\"older inequality, $\mathcal{R}=h^{-1/\mathds{G}_1}$ and \eqref{eq:balanced-euler-result-moment-bound-sub}, we deduce
\begin{align}
 \label{eq:EM-result-subevent-mom-bound}
    \E 
      \big[ \mathds{1}_{\Omega_{ \mathcal{R}, n}^{c}}
   |Y_{n}|^{p}\big]  
  & \leq 
 C (n+1) h^{\frac{\bar{p}} {\mathds{G}_1 q'}} 
  \big( h^{-p\gamma_1} + h^{-\frac{p}{2}-p \gamma_2} 
  + 
    \big( 1 + |X_0|^{\bar{p}} \big)
   ^{\frac{p}{\bar{p}}} \big)
  \big( 1+|X_0|^{\bar{p}} 
   \big)^{\frac{1}{q'}}   
%
 \leq C \big( 1 + |X_0|^{\bar{p}} \big)
   ^{\frac{p}{\bar{p}} + \frac{1}{q'}}.
\end{align}
This together with \eqref{eq:balanced-euler-result-moment-bound-sub} implies
\begin{align}
    \E  \big[ 
   \left|Y_{n}\right|^{p}
   \big] 
 & = 
 \E 
      \big[ \mathds{1}_{\Omega_{ \mathcal{R}, n}}
   |Y_{n}|^{p}\big]  
  +
     \E 
      \big[ \mathds{1}_{\Omega_{ \mathcal{R}, n}^{c}}
   |Y_{n}|^{p}\big]   
%
\leq 
  \big(
  \E \big[ \mathds{1}_{\Omega_{ \mathcal{R}, 
  n}} |Y_{n}|^{\bar{p}}  \big] 
   \big)^{\frac{p}{\bar{p}}}
 + \E 
      \big[ \mathds{1}_{\Omega_{ \mathcal{R}, n}^{c}}
   |Y_{n}|^{p}\big]   
 \leq  C 
  \big( 1+|X_0|^{p} \big)^{\beta},
\end{align}
for all integer $ p \in \big[ 1, \frac{ \bar{p}- \mathds{G}_1}{1+ \gamma_1 \mathds{G}_1} \wedge \frac{\bar{p}- \mathds{G}_1}{1+ (\frac12 + \gamma_2) \mathds{G}_1}\big]$. Here $\beta =1+ \frac{\bar{p}}{pq'} = 1+ \frac{(p\gamma_1+1)\mathds{G}_1}{p} \wedge \frac{(1+\frac12 p+ p\gamma_2)\mathds{G}_1}{p} $.
Then, by Jensen's inequality, \eqref{eq:balanced-euler-result-moment-bound} holds for real  $p$ as well.

The proof of bounded moments for the scheme \eqref{eq:Euler-scheme} under \ref{ass:tame-h2p} is nearly identical to that under \ref{ass:tame-h2} in Assumption \ref{ass:tame_function_assumption}, except for the second term $I_1$ on the right-hand side of \eqref{eq:decom-balanced-euler-sub}. By applying  \eqref{con:balanced-EM-sublinear-fghx} and assumption \ref{ass:tame-h2p}, we can obtain
\begin{align*}
    I_1 
   & = 
    \bar{p} h \E \big[ \mathds{1}_{\Omega_{ \mathcal{R}, n}} |Y_{n}|^{\bar{p}-2} 
     \big(
      \langle Y_n, \mathcal{T}_1(f(Y_n),h) \rangle
   + \frac{(\bar{p}-1)}{2} 
   | \sum_{r=1}^{m}\mg{Y_n}
   |^{2} \big) \big] \notag \\
&
 + \frac{\bar{p} (\bar{p}-1)}{2} \E \big[ \mathds{1}_{\Omega_{ \mathcal{R}, n}} 
   |Y_{n}|^{\bar{p}-2} 
   |\mathcal{T}_1(f(Y_n),h)h|^{2}  \big]   \notag \\
  & \leq 
  C h 
 + Ch
  \E \big[ \mathds{1}_{\Omega_{ \mathcal{R}, n}} |Y_{n}|^{\bar{p}} \big] 
 + Ch^2 \E \big[ \mathds{1}_{\Omega_{ \mathcal{R}, n}} |Y_{n}|^{4r+\bar{p}} \big],
\end{align*}
where $ \bar{p} \leq p_{\mathcal{T}} $ with $p_{\mathcal{T}}$ from \ref{ass:tame-h2p} or \eqref{con:one-sided-lip-con-balanced}.
Similar to the above proof, we choose $\mathcal{R} = h^{-1/\mathds{G}_1}$, where $\mathds{G}_1 = 6r$ and we obtain the desired conclusion.
\qed
\end{document}